\documentclass[reqno]{amsart}
\usepackage{amsmath,amssymb,amsfonts,caption}
\usepackage{graphicx}
\usepackage{color}
\usepackage[usenames,dvipsnames,svgnames,table]{xcolor}
\definecolor{orange}{rgb}{1,0.5,0}
\usepackage{euscript}
\usepackage{enumerate}
\usepackage{verbatim}
\usepackage{epsfig}

\newtheorem{theorem}{Theorem}
\newtheorem{lemma}{Lemma}
\newtheorem{proposition}{Proposition}
\newtheorem{example}{Example}
\newtheorem{corollary}{Corollary}

\newcommand{\eps}{\varepsilon}
\renewcommand{\phi}{\varphi}
\newcommand{\ds}{\, ds}
\newcommand{\dr}{\, dr}
\newcommand{\du}{\, du}

\newcommand{\dint}{\displaystyle \int}
\DeclareMathOperator{\Expo}{Exp}

\DeclareMathOperator{\e}{e}

\usepackage{dsfont}
   \newcommand{\N}{\ensuremath{\mathds N}}
   \newcommand{\R}{\ensuremath{\mathds R}}
\begin{document}
\title[A non-autonomous SEIRS model with general incidence rate]
   {A non-autonomous SEIRS model with general incidence rate}
\author{Joaquim P. Mateus}
\address{J. Mateus\\
   Instituto Polit\'ecnico da Guarda\\
   6300-559 Guarda\\
   Portugal}
\email{jmateus@ipg.pt}
\author{C\'esar M. Silva}
\address{C. Silva\\
   Departamento de Matem\'atica\\
   Universidade da Beira Interior\\
   6201-001 Covilh\~a\\
   Portugal}
\email{csilva@mat.ubi.pt}
\urladdr{www.mat.ubi.pt/~csilva}
\date{\today}
\thanks{C. Silva was partially supported by FCT through CMUBI (project PEst-OE/MAT/UI0212/2011) and J. Mateus was partially supported by FCT through UDI (project PEst-OE/EGE/UI4056/2011)}
\subjclass[2010]{92D30, 37B55} \keywords{Epidemic model,
non-autonomous, global stability}
\begin{abstract} For a non-autonomous SEIRS model with general incidence, that admits [T. Kuniya and Y. Nakata, Permanence and extinction for a nonautonomous SEIRS epidemic model, Appl. Math. Computing 218, 9321-9331 (2012)] as a very particular case, we obtain conditions for extinction and strong persistence of the infectives. Our conditions are computed for several particular settings and extend the hypothesis of several proposed non-autonomous models. Additionally we show that our conditions are robust in the sense that they persist under small perturbations of the parameters in some suitable family. We also present some simulations that illustrate our results.
\end{abstract}
\maketitle
\section{Introduction}
The study of epidemiological models has a long history that goes back to the construction of the ODE compartmental model of Kermack and Mckendrick~\cite{Kermack-McKendrick-PRSL-1927} in 1927. Since then, several aspects of these models were considered, including thresholds conditions for persistence and extinction of the disease, existence of periodic orbits, stability and bifurcation analysis.

In this work we focus on SEIRS models. For this models, several incidence functions were discussed for the contact between susceptibles and infectives and it is known that epidemiological models with different incidence rates can exhibit very distinct dynamical behaviors. In~\cite{Driessche-Hethcote-JMB-1991} Hethcote and den Driessche considered an autonomous SEIRS model with general incidence. In this paper we will consider a family of models with general incidence in the non-autonomous setting. Namely, we will consider models of the form
\begin{equation}\label{eq:ProblemaPrincipal}
\begin{cases}
S'=\Lambda(t)-\beta(t)\,\phi(S,N,I)-\mu(t) S+\eta(t)R\\
E'=\beta(t)\,\phi(S,N,I)-(\mu(t)+\epsilon(t))E\\
I'=\epsilon(t)E -(\mu(t)+\gamma(t))I \\
R'=\gamma(t)I-(\mu(t)+\eta(t))R \\
N=S+E+I+R
\end{cases}
\end{equation}
where $S$, $E$, $I$, $R$ denote respectively the susceptible, exposed (infected but not infective), infective and recovered compartments and $N$ is the total population, $\Lambda(t)$ denotes the birth rate, $\beta(t)\,\phi(S,N,I)$ is the incidence into the exposed class of susceptible individuals, $\mu(t)$ are the natural deaths,
$\eta(t)$ represents the rate of loss of immunity, $\epsilon(t)$ represents the infectivity rate and $\gamma(t)$ is the rate of recovery.

Our general non-autonomous setting allows the discussion of the effect of seasonal fluctuations but also of environmental and demographic effects that are non periodic. For instance, for some diseases like cholera and yellow fever, the size of the latency period may decrease with global warming~\cite{Shope-EHP-1991} and this type of effects lead to non-periodic parameters.

A particular case of our setting is the case of mass-action incidence, $\phi(S,N,I)=SI$, that was considered in papers by Zhang and Teng~\cite{Zhang-Teng-BMB-2007} and by Kuniya and Nakata~\cite{Nakata-Kuniya-JMAA-2010,Kuniya-Nakata-AMC-2012}. For mass action incidence, Teng and Zhang defined a condition for strong persistence and a condition for extinction based on the sign of some constants that, even in the autonomous setting, were not thresholds. To improve this result in the periodic mass action setting~\cite{Nakata-Kuniya-JMAA-2010}, Kuniya and Nakata obtained explicit conditions based in a general method developed by Wang and Zhao~\cite{Wang-Zhao-JDDE-2008} and Rebelo, Margheri and Baca\"er~\cite{Rebelo-Margheri-Bacaër-2012} and, in the general mass action non-autonomous setting, Zhang and Teng's result was improved in~\cite{Kuniya-Nakata-AMC-2012}. In this paper we follow the approach in~\cite{Kuniya-Nakata-AMC-2012} to obtain explicit criteria for strong persistence and extinction in the non-autonomous setting with general incidence and we consider particular situations, including autonomous and asymptotically autonomous models with general incidence, periodic models with general incidence and non-autonomous model with Michaelis-Menten incidence.

For non-autonomous models with no latency class~\cite{Pereira-Silva-Silva-MMAS-2013,Zhang-Teng-Gao-AA-2008} similar results were obtained. We emphasize that our situation is very different and in particular, unlike the referred papers, in general we need three conditions to guarantee extinction and other three to guarantee strong persistence.

Additionally to the obtention of strong persistence and extinction conditions, we also show that these conditions are robust in a large family of parameter functions. Namely, we show that if our conditions determine extinction (respectively strong persistence) and we replace $\beta$, $\eta$, $\epsilon$ and $\gamma$ by different parameter functions sufficiently close in the $C^0$ topology and also replace $\phi$ by some sufficiently close incidence function we still have extinction (respectively strong persistence) for the new model.

The structure of this paper is the following: in section~\ref{section:NP} we introduce some notations, our setting and state some simple facts about our system, in section~\ref{section:MR} we state our main theorems, in section~\ref{section:E} we apply Theorem~\ref{teo:Main} to particular situations including autonomous and asymptotically autonomous models, periodic models and non-autonomous models with Michaelis-Menten incidence functions, in section~\ref{section:P} we present the proofs of our results and finally, in section~\ref{section:D} we make some final comments about our results.
\section{Notation and Preliminaries} \label{section:NP}

We will assume that $\Lambda$, $\mu$, $\beta$, $\eta$, $\epsilon$ and $\gamma$ are continuous bounded and nonnegative functions on $\R_0^+$, that $\phi$ is a continuous bounded and nonnegative function on $(\R_0^+)^3$ and that there are $\omega_\mu,\omega_\Lambda,\omega_\beta >0$ such that
   \begin{equation}\label{eq:d-Lambda-}
   \mu_{\omega_\mu}^-> 0, \ \Lambda_{\omega_\Lambda}^- > 0 \quad \text{and} \quad \beta_{\omega_\beta}^- > 0
   \end{equation}
where we are using the notation
$$\quad h_{\omega}^-=\liminf_{t \to +\infty} \frac{1}{\omega} \int_t^{t + \omega} h(s) \ds \quad \text{and} \quad h_{\omega}^+=\limsup_{t \to +\infty} \frac{1}{\omega} \int_t^{t+\omega} h(s) \ds,$$
that we will keep on using throughout the paper. For bounded $h$ we will also use the notation
$$h_S=\sup_{t \ge 0} h(t).$$

For each $\delta$ and $\theta$ with $\delta > \theta \ge 0$ define the set
$$\Delta_{\theta,\delta}=\{(x,n,z) \in \R^3 \colon \ \theta \le x \le n \le \delta \ \wedge \ 0 \le z \le n \le \delta \}.$$
We note that for every solution $(S(t),E(t),I(t),R(t))$ of our system the vector $(S(t),N(t),I(t))$ with $N(t)=S(t)+E(t)+I(t)+R(t)$ stays in the region $\Delta_{0,K}$ (we can take any constant $K>D$ with $D$ given by~\ref{cond-3-bs}) in Proposition~\ref{subsection:BS}) for every $t \in \R_0^+$ sufficiently large.
We need some additional assumptions about our system.
Assume that:
\begin{enumerate}[$\text{H}$1)]
\item \label{cond-CC1} for each $0\le x \le K$ and $0\le z \le K$, the function $n \mapsto \phi(x,n,z)$ is non increasing, for each $0\le z \le n \le K$ the function $x \mapsto \phi(x,n,z)$ is non decreasing and for each $0\le z\le K$ the function $x \mapsto \phi(x,x,z)$ is non decreasing and $\phi(0,n,z) = 0$;
\item \label{cond-CC5} for each $0\le x \le n \le K$ the limit
        $$\lim_{z \to 0^+} \dfrac{\phi(x,n,z)}{z}$$
        exists and the convergence is uniform in $(x,n)$ verifying $0\le x \le n \le K$;
\item \label{cond-CC3} for each $0 \le x \le n \le K$, the function
       $$
       z \mapsto
       \begin{cases}
       \dfrac{\phi(x,n,z)}{z} & \text{if} \quad 0 \le z \le K\\
       \displaystyle \lim_{z \to 0^+} \dfrac{\phi(x,n,z)}{z} & \text{if} \quad z = 0
       \end{cases}$$
    is continuous, bounded and non increasing;
\item \label{cond-CC2} given $\theta>0$ there is $K_\theta>0$ such that
$$|\phi(x_1,n,z) - \phi(x_2,n,z)| \le K_\theta |x_1-x_2|z,$$
for $(x_1,n_1,z),(x_2,n_2,z)\in \Delta_{\theta,K}$, and
$$|\phi(x_1,x_1,z) - \phi(x_2,x_2,z)| \le K_\theta |x_1-x_2|z,$$
for $(x_1,x_1,z),(x_2,x_2,z)\in \Delta_{\theta,K}$.
\end{enumerate}
Note that, by~H\ref{cond-CC5}) and~H\ref{cond-CC3}) and for every $0 \le x \le n \le K$ and $0 \le z \le n \le K$, there is $M>0$ such that we have
\begin{equation}\label{cond-CC4}
\dfrac{\phi(x,n,z)}{z} \le \lim_{\delta \to \, 0^+} \frac{\phi(x,n,\delta)}{\delta} \le M <+\infty.
\end{equation}
Note also that, if for each $\theta \in ]0,K]$ there is $K_\theta>0$ such that
   $$\dfrac{\partial \phi}{\partial x} (x,n,z) \le K_\theta z,$$
for all $(x,n,z)\in \Delta_{\theta,K}$, then H\ref{cond-CC2}) holds.

We emphasise that, as we will see, conditions~H\ref{cond-CC1})--H\ref{cond-CC2}) are verified in the usual examples.

We now state some simple facts about our system.

\begin{proposition}\label{subsection:BS}
We have the following:
\begin{enumerate}[i)]
\item \label{cond-1-bs} all solutions $(S(t),E(t),I(t),R(t))$ of~\eqref{eq:ProblemaPrincipal}
with nonnegative initial conditions,
$S(0),E(0),I(0),R(0) \ge 0$,
are nonnegative for all $t \ge 0$;
\item \label{cond-2-bs} all solutions $(S(t),E(t),I(t),R(t))$ of~\eqref{eq:ProblemaPrincipal}
with positive initial conditions, $S(0)$, $E(0)$, $I(0)$, $R(0)>0$, are positive for all $t \ge 0$;
\item \label{cond-3-bs} There is a constant $D>0$ such that, if $(S(t),E(t),I(t),R(t))$ is a solution
of~\eqref{eq:ProblemaPrincipal} with nonnegative initial conditions, $S(0),E(0),I(0),R(0) \ge 0$, then
\[
\limsup_{t \to +\infty} N(t) = \limsup_{t \to +\infty} \left( S(t)+E(t)+I(t)+R(t) \right) \le D.
\]
\end{enumerate}
\end{proposition}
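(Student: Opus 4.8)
The plan is to establish the three parts of Proposition~\ref{subsection:BS} in order, since each relies on the previous one.

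For part~\ref{cond-1-bs}, I would argue by looking at the boundary of the positive cone. On the hyperplane $S=0$ we have $S'=\Lambda(t)+\eta(t)R \ge 0$ because $\phi(0,N,I)=0$ by H\ref{cond-CC1}); on $E=0$ we have $E'=\beta(t)\phi(S,N,I) \ge 0$ by nonnegativity of $\phi$; on $I=0$ we have $I'=\epsilon(t)E \ge 0$; and on $R=0$ we have $R'=\gamma(t)I \ge 0$. Hence the vector field points inward (or is tangent to) the nonnegative orthant on each face, so by a standard invariance argument (e.g.\ comparison with the scalar equation $u'=-c(t)u$ for a suitable bound $c$, or Nagumo-type tangency conditions) the closed cone $(\R_0^+)^4$ is forward invariant. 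Care is needed that $\phi(S,N,I)$ is defined and well-behaved only for nonnegative arguments, but this is exactly what forward invariance guarantees; one typically makes this rigorous by extending $\phi$ continuously, or by a bootstrapping argument on the first exit time.

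For part~\ref{cond-2-bs}, assuming nonnegativity from part~\ref{cond-1-bs}, I would use the variation-of-constants representation for each coordinate. For instance $E(t) = E(0)\exp\left(-\int_0^t (\mu+\epsilon)\right) + \int_0^t \beta(s)\phi(S(s),N(s),I(s)) \exp\left(-\int_s^t(\mu+\epsilon)\right)\ds$, which is strictly positive as soon as $E(0)>0$ since the integrand is nonnegative and the exponential factor is positive; the same works coordinate by coordinate for $S$, $I$, $R$, using that the ``source'' term in each equation is nonnegative and the initial value is positive. This step is essentially routine once part~\ref{cond-1-bs} is in hand.

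Part~\ref{cond-3-bs} is the substantive one and I expect it to be the main obstacle. Adding the four equations gives $N' = \Lambda(t) - \mu(t)N - \gamma(t)\cdot 0 \cdots$; more precisely all the internal transfer terms cancel and $N' = \Lambda(t) - \mu(t)(S+E+I+R) = \Lambda(t) - \mu(t)N$ (the $\eta R$, $\epsilon E$, $\gamma I$ terms appear with both signs and cancel). So $N$ satisfies a scalar linear non-autonomous equation $N' = \Lambda(t) - \mu(t)N$ with $\Lambda$ bounded above, say by $\Lambda_S$, and $\mu$ satisfying $\mu_{\omega_\mu}^- > 0$ from~\eqref{eq:d-Lambda-}. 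By the comparison principle, $N(t) \le \tilde N(t)$ where $\tilde N' = \Lambda_S - \mu(t)\tilde N$, and then one estimates $\limsup_{t\to+\infty}\tilde N(t)$. The key point is that even though $\mu(t)$ need not be bounded below pointwise, the condition $\mu_{\omega_\mu}^->0$ forces $\int_t^{t+\omega_\mu}\mu$ to be eventually bounded below by a positive constant, so $\exp\left(-\int_s^t \mu\right)$ decays; splitting $[s,t]$ into blocks of length $\omega_\mu$ gives geometric decay of the homogeneous part and a uniformly bounded convolution term. The resulting bound $\limsup N(t) \le D$ with, concretely, $D$ of the order $\Lambda_S \cdot (\text{something like } \omega_\mu / \mu_{\omega_\mu}^-)$ up to a multiplicative constant, finishes the proof. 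The delicate part is carrying out the block estimate cleanly and making the choice of $D$ explicit enough to be usable in the definition of $\Delta_{0,K}$ later.
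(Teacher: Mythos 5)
Your treatment of part iii) is essentially the paper's own proof: you sum the four equations to get $N'=\Lambda(t)-\mu(t)N$, bound $\Lambda$ by $\Lambda_S$, and use the block estimate coming from $\mu_{\omega_\mu}^->0$ (which yields $\int_{t_0}^t\mu(s)\ds\ge\mu_1(t-t_0)-\mu_2$) together with variation of constants to obtain $\limsup_{t\to+\infty}N(t)\le \Lambda_S\e^{\mu_2}/\mu_1$, exactly the constant $D$ the paper takes; parts i) and ii) are simply declared ``easy'' in the paper, so your extra detail there goes beyond it. One small caution on ii): for the $S$-equation the would-be source $\Lambda-\beta\phi(S,N,I)+\eta R$ is \emph{not} nonnegative because of the incidence term, so the coordinatewise variation-of-constants argument you describe works for $E$, $I$, $R$ but for $S$ you must instead invoke $\phi(0,N,I)=0$ (as in your part i) boundary argument) or absorb the incidence term into the decay, which requires some Lipschitz-type control of $\phi$ in its first variable near $x=0$.
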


\begin{proof}
Properties~\ref{cond-1-bs}) and~\ref{cond-2-bs}) are easy to prove.
In fact, since $t \mapsto \Lambda(t)$ and $t \mapsto \mu(t)$ are bounded, adding the first four equations in~\eqref{eq:ProblemaPrincipal}
we obtain for nonnegative initial conditions,
\[
N'=\Lambda(t)-\mu(t) N.
\]
By~\eqref{eq:d-Lambda-}, there is $T \ge 0$ such that $\int_t^{t+\omega_\mu}\mu(s)\ds \ge \frac12 \mu_{\omega_\mu}^- \omega_\mu$ for $t \ge T$. Thus, given $t_0 \ge T$ we have
    \[
    \begin{split}
    \int_{t_0}^t\mu(s)\ds
    & \ge \int_{t_0}^{t_0+\lfloor\frac{t-t_0}{\omega_\mu}\rfloor \omega_\mu} \mu(s)\ds\\
    & \ge \frac12 \mu_{\omega_\mu}^- \omega_\mu \lfloor\frac{t-t_0}{\omega_\mu}\rfloor \\
    & \ge \frac12 \mu_{\omega_\mu}^- \omega_\mu \left( \frac{t-t_0}{\omega_\mu} -1 \right) \\
    & = \frac12 \mu_{\omega_\mu}^- (t-t_0) - \frac12 \mu_\omega^- \omega_\mu
    \end{split}
    \]
and, setting $\mu_1=\dfrac12 \mu_{\omega_\mu}^-$ and $\mu_2=\dfrac12 \mu_{\omega_\mu}^- \omega_\mu$, we conclude that there are $\mu_1,\mu_2>0$ and $T>0$ sufficiently large such that, for all $t\ge t_0 \ge T$ we have
	\begin{equation}\label{eq:infboundmu}
        \int_{t_0}^t\mu(s)\ds \ge \mu_1(t-t_0)-\mu_2.
    \end{equation}
By~\eqref{eq:infboundmu} we have, for all $t \ge T$,
   \[
   \begin{split}
   N(t)
   & = \e^{-\int_{t_0}^t\mu(s)\ds} N_0 + \dint_{t_0}^t \e^{-\int_u^t\mu(s)\ds} \Lambda(u)\du \\
   & \le \e^{-\mu_1 (t-t_0)+\mu_2} N_0+\Lambda_S \dint_{t_0}^t \e^{-\mu_1(t-u)+\mu_2}\du \\
   & = \e^{-\mu_1(t-t_0)+\mu_2} N_0+\frac{\Lambda_S \e^{\mu_2}}{\mu_1} \left(1- \e^{-\mu_1(t-t_0)}\right)
   \end{split}
   \]
Therefore
$$\limsup_{t \to +\infty} N(t) \le \limsup_{t \to +\infty} \left[ \e^{-\mu_1(t-t_0)+\mu_2} N_0+\frac{\Lambda_S \e^{\mu_2}}{\mu_1} \left(1- \e^{-\mu_1(t-t_0)}\right)\right] = \frac{\Lambda_S\e^{\mu_2}}{\mu_1}$$
and we obtain the result setting $D=\Lambda_S\e^{\mu_2}/\mu_1$.
\end{proof}

Proposition~\ref{subsection:BS} shows that, for every $\delta>0$, $K>D$ (with $D$ given by \eqref{cond-3-bs} in Proposition~\ref{subsection:BS})
and every solution $(S(t),E(t),I(t),R(t))$ of our system, $(S(t),N(t),I(t))$ stays in the region $\Delta_{0,K}$ for $t \in \R_0^+$ sufficiently large.

\section{Main results} \label{section:MR}

We need to consider the following auxiliar differential equation
\begin{equation}\label{eq:SistemaAuxiliar}
z'=\Lambda(t)-\mu(t)z.
\end{equation}

The next result summarizes some properties of the given equation.
\begin{proposition} \label{eq:subsectionAS}
We have the following:
\begin{enumerate}[i)]
\item \label{cond-1-aux}
Given $t_0 \ge 0$, all solutions $z(t)$ of equation~\eqref{eq:SistemaAuxiliar} with initial condition $z(t_0) \ge 0$ are nonnegative for all $t \ge 0$;
\item \label{cond-1a-aux}
Given $t_0 \ge 0$, all solutions $z(t)$ of equation~\eqref{eq:SistemaAuxiliar} with initial condition $z(t_0) > 0$ are positive for all $t \ge 0$;
\item \label{cond-2-aux} Each fixed solution $z(t)$ of~\eqref{eq:SistemaAuxiliar} with initial condition $z(t_0) \ge 0$ is bounded and globally uniformly attractive on $[0,+\infty[$;
\item \label{cond-3-aux} there is $D\ge 0$ and $T>0$ such that if $t_0\ge T$, $z(t)$ is a solution of~\eqref{eq:SistemaAuxiliar} and $\tilde z(t)$ is a solution of
\begin{equation}\label{eq:sistauxperturb}
  z'=\Lambda(t) - \mu(t) z + f(t)
\end{equation}
with $f$ bounded and $\tilde z(t_0)=z(t_0)$ then
\[
\sup_{t \ge t_0} |\tilde z(t) - z(t)| \le D \ \sup_{t \ge t_0} |f(t)|.
\]
\item \label{cond-4-aux}
There exists constants $m_1,m_2>0$ such that,
for each solution of~\eqref{eq:SistemaAuxiliar} with $z(0)=z_0>0$, we have
\[
m_1 \le \liminf_{t \to \infty} z(t) \le \limsup_{t \to \infty} z(t) \le m_2.
\]
\end{enumerate}
\end{proposition}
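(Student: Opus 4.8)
The proof rests on the fact that \eqref{eq:SistemaAuxiliar} is a linear scalar equation, so every solution with $z(t_0)=z_0$ is given by the variation of constants formula
\begin{equation*}
z(t)=\e^{-\int_{t_0}^t\mu(s)\ds}\,z_0+\int_{t_0}^t \e^{-\int_u^t\mu(s)\ds}\,\Lambda(u)\du,
\end{equation*}
combined with the bound $\int_{t_0}^t\mu(s)\ds\ge\mu_1(t-t_0)-\mu_2$ for $t\ge t_0\ge T$ (with $\mu_1,\mu_2>0$), already obtained in \eqref{eq:infboundmu} inside the proof of Proposition~\ref{subsection:BS}. Items~\ref{cond-1-aux}) and~\ref{cond-1a-aux}) are then immediate: for $t\ge t_0$ both summands on the right are nonnegative because $\Lambda,\mu\ge0$ and the exponentials are positive, and the first summand is strictly positive when $z_0>0$; the remaining values $t\in[0,t_0)$ are dealt with in the same way reading the formula backwards (or, more simply, by taking $t_0=0$).

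For item~\ref{cond-2-aux}), boundedness follows exactly as in the proof of Proposition~\ref{subsection:BS}: estimating $\Lambda(u)\le\Lambda_S$ and invoking \eqref{eq:infboundmu} gives $z(t)\le\e^{-\mu_1(t-t_0)+\mu_2}z_0+\frac{\Lambda_S\e^{\mu_2}}{\mu_1}\bigl(1-\e^{-\mu_1(t-t_0)}\bigr)$ for $t\ge t_0\ge T$ (when $t_0<T$ one first bounds $z$ crudely on the compact interval $[t_0,T]$ and restarts the estimate at $T$). For global uniform attractivity, observe that the difference $w=z_1-z_2$ of two solutions satisfies $w'=-\mu(t)w$, hence $|w(t)|=|w(t_0)|\,\e^{-\int_{t_0}^t\mu(s)\ds}$, which by \eqref{eq:infboundmu} is bounded by $C\,|w(t_0)|\,\e^{-\mu_1(t-t_0)}$ for a constant $C$ independent of $t_0\ge0$; thus the exponential decay rate and constant do not depend on $t_0$, which is precisely global uniform attractivity.

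For item~\ref{cond-3-aux}), set $v=\tilde z-z$, so that $v'=-\mu(t)v+f(t)$ with $v(t_0)=0$, whence $v(t)=\int_{t_0}^t\e^{-\int_u^t\mu(s)\ds}f(u)\du$. For $t\ge u\ge t_0\ge T$ the bound \eqref{eq:infboundmu} yields
\begin{equation*}
|v(t)|\le\sup_{s\ge t_0}|f(s)|\int_{t_0}^t\e^{-\mu_1(t-u)+\mu_2}\du\le\frac{\e^{\mu_2}}{\mu_1}\,\sup_{s\ge t_0}|f(s)|,
\end{equation*}
so the statement holds with $D=\e^{\mu_2}/\mu_1$ and the same $T$ as in \eqref{eq:infboundmu}.

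Finally, for item~\ref{cond-4-aux}), the upper estimate is the boundedness bound just obtained, giving $\limsup_{t\to\infty}z(t)\le\Lambda_S\e^{\mu_2}/\mu_1=:m_2$. For the lower estimate, in the variation of constants formula discard the (nonnegative) $z_0$-term and keep only the last window of the integral: for $t$ large, $z(t)\ge\int_{t-\omega_\Lambda}^t\e^{-\int_u^t\mu(s)\ds}\Lambda(u)\du\ge\e^{-\mu_S\omega_\Lambda}\int_{t-\omega_\Lambda}^t\Lambda(u)\du$, using $\int_u^t\mu\le\mu_S\omega_\Lambda$ on $[t-\omega_\Lambda,t]$. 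Since $\liminf_{t\to\infty}\int_{t-\omega_\Lambda}^t\Lambda(u)\du=\omega_\Lambda\,\Lambda_{\omega_\Lambda}^->0$ by \eqref{eq:d-Lambda-}, we get $\liminf_{t\to\infty}z(t)\ge\e^{-\mu_S\omega_\Lambda}\omega_\Lambda\,\Lambda_{\omega_\Lambda}^-=:m_1>0$, with $m_1,m_2$ independent of $z_0$. None of this is delicate; the only points requiring a little attention are the treatment of the initial interval $[0,T]$, where \eqref{eq:infboundmu} is not yet available and a crude bound must be used instead, and checking in~\ref{cond-2-aux}) that the decay of $w$ is genuinely uniform in the initial time $t_0$.
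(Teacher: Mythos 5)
Your proposal is correct and follows essentially the same route as the paper: the explicit variation of constants formula together with the lower bound \eqref{eq:infboundmu} on $\int_{t_0}^t\mu$, yielding the same estimates (including $D=\e^{\mu_2}/\mu_1$ in item iv) and the window argument over $[t-\omega_\Lambda,t]$ for the lower bound in item v). The extra remarks on the initial interval $[0,T]$ and on uniformity in $t_0$ are just slightly more careful renderings of the same argument.
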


\begin{proof}
Given $t_0\ge 0$, the solution of~\eqref{eq:SistemaAuxiliar} with initial condition $z(t_0)=z_0$ is given by
	$$z(t)=\e^{-\int_{t_0}^t \mu(s) \ds} z_0 +\dint_{t_0}^t \e^{-\int_u^t \mu(s) \ds} \Lambda(u) \du $$
and thus, since $\Lambda(t)\ge 0$ for all $t\ge 0$, if $z_0\ge 0$ we obtain $z(t)\ge 0$ for all $t\ge t_0$ and if $z_0> 0$ we obtain $z(t)> 0$ for all $t\ge t_0$. This establishes~\ref{cond-1-aux}) and~\ref{cond-1a-aux}).

By~\eqref{eq:d-Lambda-} (recalling~\eqref{eq:infboundmu}), there are $\mu_1,\mu_2>0$ sufficiently small and $t_0>0$ sufficiently large such that, for all $t \ge t_0$ we have
	\begin{equation} \label{eq:split1}
   \begin{split}
   z(t)
   & = \e^{-\int_{t_0}^t\mu(s)\ds} z_0 + \dint_{t_0}^t \e^{-\int_u^t\mu(s)\ds} \Lambda(u)\du \\
   & \le \e^{-\mu_1 (t-t_0)+\mu_2} z_0+\Lambda_S \dint_{t_0}^t \e^{-\mu_1 (t-u)+\mu_2}\du \\
   & = \e^{-\mu_1 (t-t_0)+\mu_2} z_0+\frac{\Lambda_S \e^{\mu_2}}{\mu_1} \left(1- \e^{-\mu_1(t-t_0)}\right)
   \end{split}
   \end{equation}
and we conclude that $z(t)$ is bounded.

Let $z_1$ be a solution of~\eqref{eq:SistemaAuxiliar} with $z_1(t_0)=z_{0,1}$. By~\eqref{eq:d-Lambda-}, there is $t_0>0$ and $\tilde{\mu}>0$ such that, for $t \ge t_0$ we have
	\[
   |z(t)-z_1(t)| = \e^{-\int_{t_0}^t\mu(s)\ds} |z_0-z_{0,1}| \le \e^{-\mu_1 (t-t_0)+\mu_2} |z_0-z_{0,1}|
   \]
and thus $|z(t)-z_1(t)|\to0$ as $t \to +\infty$ and we obtain~\ref{cond-2-aux}).

Subtracting~\eqref{eq:SistemaAuxiliar} and~\eqref{eq:sistauxperturb} and setting $w(t)=\tilde{z}(t)-z(t)$ we obtain
   $$w'=-\mu(t) w + f(t)$$
and thus, since $w(t_0)=\tilde{z}(t_0)-z(t_0)=0$, we get again by~\eqref{eq:d-Lambda-} (and the computations in~\eqref{eq:infboundmu}), for $t_0$ sufficiently large
\[
\begin{split}
|\tilde{z}(t)-z(t)|
& =|w(t)| = \dint_{t_0}^t \e^{-\int_u^t \mu(s) \ds} |f(u)| \du
\le \sup_{t \ge t_0} |f(t)| \dint_{t_0}^t \e^{-\mu_1 (t-u)+\mu_2} \du \\
& = \frac{\e^{\mu_2}}{\mu_1} \ \sup_{t \ge t_0} |f(t)| \ \left(1-\e^{-\mu_1( t- t_0)}\right)
\le \frac{\e^{\mu_2}}{\mu_1} \ \sup_{t \ge t_0} |f(t)|,
\end{split}
\]

 for all $t \ge t_0$, and we obtain~\ref{cond-3-aux}).

For all $t>0$ sufficiently large there is $\Lambda_1>0$ such that
\[
\begin{split}
z(t)
& = \e^{-\int_{t-\omega_\Lambda}^t \mu(s) \ds} z_0 +\dint_{t-\omega_\Lambda}^t \e^{-\int_u^t \mu(s) \ds} \Lambda(u) \du  \\
& \ge \dint_{t-\omega_\Lambda}^t \e^{-\mu_S \omega_\Lambda} \Lambda(u) \du  \\
& \ge \Lambda_1 \e^{-\mu_S\omega_\Lambda}
\end{split}
\]
and thus $\displaystyle \liminf_{t \to +\infty} z(t) \ge \Lambda_1 \e^{-\mu_S \omega_\Lambda}$. By~\eqref{eq:split1} we have $\displaystyle \limsup_{t \to +\infty} z(t) \le \frac{\Lambda_S \e^{\mu_2}}{\mu_1}$. Therefore we obtain~\ref{cond-4-aux}).
\end{proof}

For $p>0$ and $t>0$, define the auxiliary functions
\begin{equation}\label{eq:glambda}
g_\delta (p,t,z)=\beta(t) \frac{\phi(z(t),z(t),\delta)}{\delta} \, p + \gamma(t) - \left(1+\frac{1}{p} \right) \epsilon(t),
\end{equation}	
\[
h(p,t)=\gamma(t) - \left(1+\frac{1}{p} \right) \epsilon(t),
\]	

\begin{equation}\label{eq:bSlambda}
b_\delta(p,t,z)=\beta(t)\frac{\phi(z(t),z(t),\delta)}{\delta} \, p - \mu(t) - \epsilon(t),
\end{equation}		
where $z(t)$ is any solution of~\eqref{eq:SistemaAuxiliar} such that $z(0)>0$, and also consider the function
$$W(p,t)=p E(t) - I(t).$$
 For each solution $z(t)$ of~\eqref{eq:SistemaAuxiliar} with $z(0)>0$ and $\lambda>0, p>0$ we define
\begin{equation} \label{eq:relambdap}
R_e(\lambda,p)= \Expo\left[\limsup_{t \to +\infty} \int_t^{t+\lambda} \lim_{\delta \to 0^+}
 b_{\delta}(p,s,z(s)) \ds\right],
\end{equation}
\begin{equation} \label{eq:rplambdap}
R_p(\lambda,p)= \Expo\left[\liminf_{t \to +\infty} \int_t^{t+\lambda} \lim_{\delta \to 0^+}
 b_\delta(p,s,z(s)) \ds\right],
\end{equation}
\begin{equation} \label{eq:r*elambdap}
R^*_e(\lambda,p)=\Expo\left[\limsup_{t \to +\infty} \int_t^{t+\lambda}
\frac{\epsilon(s)}{p}-\mu(s)-\gamma(s) \ds\right],
\end{equation}
\begin{equation} \label{eq:r*plambdap}
R^*_p(\lambda,p)=\Expo\left[\liminf_{t \to +\infty} \int_t^{t+\lambda}
\frac{\epsilon(s)}{p}-\mu(s)-\gamma(s) \ds\right],
\end{equation}
and finally
\begin{equation} \label{eq:Gp}
G(p)=\limsup_{t \to +\infty}\lim_{\delta \to 0^+} g_\delta(p,t,z(t))
\end{equation}
and
\begin{equation} \label{eq:Hp}
H(p)=\liminf_{t \to +\infty} h(p,t).
\end{equation}

Note that, if the incidence function is differentiable, then the equations~\eqref{eq:relambdap},~\eqref{eq:rplambdap} and~\eqref{eq:Gp} simplify. In fact, in this case, according to~H\ref{cond-CC2}) we have $\phi(x,n,0)=0$, and thus
$$\lim_{\delta \to 0^+} \dfrac{\phi(z(t),z(t),\delta)}{\delta}=\dfrac{\partial \phi}{\partial z}(z(t),z(t),0).$$

The next lemma shows that numbers $R_e(\lambda,p)$, $R_p(\lambda,p)$, and $G(p)$ above do not depend on the particular solution $z(t)$ of~\eqref{eq:SistemaAuxiliar} with $z(0)>0$.

\begin{lemma} \label{lemma:indep}
We have the following:
\begin{enumerate}
\item Let $p>0$, $\eps > 0$ be sufficiently small and $0 < \theta \le K$. If
$$a,b \in \ ]\theta,K[ \quad \text{and} \quad  a-b<\eps,$$
then
\begin{equation} \label{eq:bdela}
b_\delta(p,t,a) - b_\delta(p,t,b) < \beta_S K_\theta p \eps .
\end{equation}
\item The numbers $R_p(\lambda,p)$ and $R_e(\lambda,p)$ and $G(p)$ are independent of the particular solution $z(t)$ with $z(0)>0$ of~\eqref{eq:SistemaAuxiliar}.
\end{enumerate}
\end{lemma}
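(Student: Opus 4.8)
The plan is to prove the local Lipschitz estimate~(1) directly from the structural hypotheses on $\phi$, and then to deduce the independence statement~(2) from~(1) together with the uniform attractivity of the auxiliary equation~\eqref{eq:SistemaAuxiliar}.

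For~(1), I would unwind definition~\eqref{eq:bSlambda} with the value $a$ (respectively $b$) put in the slot occupied by $z(t)$. The terms $\mu(t)$ and $\epsilon(t)$ cancel, leaving
\[
b_\delta(p,t,a)-b_\delta(p,t,b)=\beta(t)\,p\,\frac{\phi(a,a,\delta)-\phi(b,b,\delta)}{\delta}.
\]
If $a\ge b$ then, taking $\delta$ small enough that $(a,a,\delta),(b,b,\delta)\in\Delta_{\theta,K}$, the numerator is nonnegative since $x\mapsto\phi(x,x,\delta)$ is non decreasing by~H\ref{cond-CC1}), and~H\ref{cond-CC2}) gives $\phi(a,a,\delta)-\phi(b,b,\delta)\le K_\theta(a-b)\delta<K_\theta\eps\delta$; dividing by $\delta>0$ and using $\beta(t)\le\beta_S$ yields~\eqref{eq:bdela}. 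If $a<b$ the numerator is non-positive, so the left-hand side is $\le 0<\beta_S K_\theta p\eps$ and~\eqref{eq:bdela} is immediate.

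For~(2), let $z_1,z_2$ be two solutions of~\eqref{eq:SistemaAuxiliar} with $z_1(0),z_2(0)>0$. By Proposition~\ref{eq:subsectionAS}~\ref{cond-4-aux}) there are $m_1,m_2>0$ with $m_1\le\liminf_{t\to+\infty}z_i(t)\le\limsup_{t\to+\infty}z_i(t)\le m_2$, and since the proof of that result lets us take $m_2=D<K$, we can fix $\theta\in\,]0,m_1[$ and $T_0\ge0$ with $\theta<z_i(t)<K$ for all $t\ge T_0$, $i=1,2$; by Proposition~\ref{eq:subsectionAS}~\ref{cond-2-aux}) we also have $|z_1(t)-z_2(t)|\to0$ as $t\to+\infty$. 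Given $\eps>0$, choose $T\ge T_0$ with $|z_1(t)-z_2(t)|<\eps$ for $t\ge T$; applying~(1) with $(a,b)=(z_1(t),z_2(t))$ and with $(a,b)=(z_2(t),z_1(t))$ gives $|b_\delta(p,t,z_1(t))-b_\delta(p,t,z_2(t))|<\beta_S K_\theta p\eps$ for all $t\ge T$ and all sufficiently small $\delta>0$. Since this bound is uniform in $\delta$ and the limits $\lim_{\delta\to0^+}b_\delta(p,t,z_i(t))$ exist by~H\ref{cond-CC5}), we may pass to the limit $\delta\to0^+$ keeping the estimate; integrating over $[t,t+\lambda]$ and then taking $\limsup_{t\to+\infty}$, respectively $\liminf_{t\to+\infty}$, shows that the exponents appearing in~\eqref{eq:relambdap} and~\eqref{eq:rplambdap} built from $z_1$ and from $z_2$ differ by at most $\beta_S K_\theta p\eps\lambda$, hence coincide as $\eps>0$ is arbitrary; applying $\Expo$ gives the independence of $R_e(\lambda,p)$ and $R_p(\lambda,p)$. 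Finally, from~\eqref{eq:glambda} we have $g_\delta(p,t,z)=\beta(t)\frac{\phi(z(t),z(t),\delta)}{\delta}p+h(p,t)$, whose $z$-dependent part is the same as that of $b_\delta$, so the identical estimate bounds $|g_\delta(p,t,z_1(t))-g_\delta(p,t,z_2(t))|$; letting $\delta\to0^+$ and taking $\limsup_{t\to+\infty}$ gives the independence of $G(p)$.

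The computations are elementary; the only points needing care are the two bookkeeping ones. First, Proposition~\ref{eq:subsectionAS}~\ref{cond-4-aux}) together with $m_2=D<K$ is what confines both solutions, for $t$ large, to a region $\{\theta\le x\le K\}$ with $\theta>0$, which is precisely what makes the local Lipschitz hypothesis~H\ref{cond-CC2}) applicable. Second, one must check that the limit $\delta\to0^+$ may be carried through the pointwise inequality, which holds because the constant $K_\theta$ is independent of $\delta$ and the relevant one-sided limits exist uniformly by~H\ref{cond-CC5}). I do not expect any genuine analytic obstacle beyond these.
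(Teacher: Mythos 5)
Your proposal is correct and follows essentially the same route as the paper: establish the estimate via H\ref{cond-CC1}) and the Lipschitz condition H\ref{cond-CC2}), then combine the eventual lower/upper bounds and the global attractivity of solutions of~\eqref{eq:SistemaAuxiliar} (Proposition~\ref{eq:subsectionAS}) to compare the integrands for two solutions, pass to the limit $\delta\to0^+$, integrate over $[t,t+\lambda]$ and let $\eps\to0$. The only cosmetic difference is that you obtain the two-sided bound by applying part~(1) with the roles of $a$ and $b$ swapped, whereas the paper derives it directly from H\ref{cond-CC2}); this changes nothing of substance.
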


We will also use the next technical lemma in the proof of our main theorem.
\begin{lemma} \label{lemma:tech}
If there is a positive constant $p>0$ such that $G(p)<0$ or $H(p)>0$ then there exists $T \ge 0$ such that either $W(p,t) \le 0$ for all $t \ge T$ or $W(p,t) > 0$ for all $t \ge T$. Additionally, if there are positive constants $p,\lambda>0$ such that $G(p)<0$ or $H(p)>0$, $R_p(\lambda,p)>1$ and $R_p^*(\lambda,p)>1$, then there exists $T \ge 0$ such that $W(p,t) \le 0$.
\end{lemma}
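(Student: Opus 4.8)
The plan is to reduce both assertions to an inspection of the sign of $W'(p,\cdot)$ at the zeros of $t\mapsto W(p,t)$, and then, for the second assertion, to rule out the remaining alternative ``$W>0$'' by forcing $I$ to blow up. Throughout I would fix a solution $(S,E,I,R)$ with positive initial conditions, so that $E(t),I(t)>0$ for all $t\ge0$ by Proposition~\ref{subsection:BS} and $(S(t),N(t),I(t))\in\Delta_{0,K}$ for $t$ large; since $N$ is itself a solution of the auxiliary equation~\eqref{eq:SistemaAuxiliar} with $N(0)>0$, Lemma~\ref{lemma:indep} lets me take $z=N$ in $g_\delta$, $b_\delta$, $G$, $R_p$ and $R_p^*$, and I write $M(t)=\lim_{\delta \to 0^+}\phi(N(t),N(t),\delta)/\delta$.

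First I would compute, from the second and third equations of~\eqref{eq:ProblemaPrincipal} and the substitution $I=pE-W$, the short identity
\[
W'(p,t)=p\,\beta(t)\,\phi(S,N,I)+p\,h(p,t)\,E-(\mu(t)+\gamma(t))\,W(p,t).
\]
At a time $t^*$ with $W(p,t^*)=0$, i.e.\ $I(t^*)=pE(t^*)$, this becomes $W'(p,t^*)=p\beta(t^*)\phi(S(t^*),N(t^*),I(t^*))+p\,h(p,t^*)E(t^*)$. If $H(p)>0$, then $h(p,t^*)>0$ for $t^*$ large and, since $\phi\ge0$ and $E(t^*)>0$, we get $W'(p,t^*)>0$. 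If instead $G(p)<0$, I would use H\ref{cond-CC1}) (so $\phi(S,N,I)\le\phi(N,N,I)$ because $S\le N$) together with H\ref{cond-CC3}) (so $\phi(N,N,I)\le M(t^*)I(t^*)$) and then replace $I(t^*)=pE(t^*)$ to obtain
\[
W'(p,t^*)\le pE(t^*)\bigl(p\,\beta(t^*)M(t^*)+h(p,t^*)\bigr)=pE(t^*)\lim_{\delta \to 0^+}g_\delta(p,t^*,z(t^*)),
\]
which is $<0$ for $t^*$ large because the $\limsup$ of the last factor equals $G(p)<0$. In either case $W'(p,\cdot)$ has a fixed nonzero sign at every sufficiently large zero of $W(p,\cdot)$, so that function can vanish at most once beyond some $T$ and is therefore eventually of a single sign; this is the first assertion. (In the degenerate subcase $E(t^*)=0$ one has $I(t^*)=0$ as well, hence $E\equiv I\equiv0$ afterwards and $W(p,\cdot)\le0$ there.)

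For the second assertion I would argue by contradiction: by the first assertion it suffices to exclude $W(p,t)>0$ for all $t\ge T_0$. If that held then $I(t)<pE(t)$, so (using $I(t)>0$) the third equation of~\eqref{eq:ProblemaPrincipal} gives $I'(t)/I(t)=\epsilon(t)E(t)/I(t)-\mu(t)-\gamma(t)\ge\epsilon(t)/p-\mu(t)-\gamma(t)$, whence
\[
I(t)\ge I(T_0)\,\e^{\int_{T_0}^t\left(\frac{\epsilon(s)}{p}-\mu(s)-\gamma(s)\right)\ds}.
\]
Splitting $[T_0,t]$ into consecutive intervals of length $\lambda$ and invoking $R_p^*(\lambda,p)>1$, that is $\liminf_{t\to+\infty}\int_t^{t+\lambda}\bigl(\epsilon(s)/p-\mu(s)-\gamma(s)\bigr)\ds>0$, together with boundedness of the integrand, shows that the exponent tends to $+\infty$; hence $I(t)\to+\infty$, contradicting $\limsup_{t\to+\infty}I(t)\le\limsup_{t\to+\infty}N(t)\le D$ from Proposition~\ref{subsection:BS}. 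Thus $W(p,t)\le0$ for all large $t$.

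The hard part will be the estimate $\phi(S,N,I)\le M(t)I$ used in the case $G(p)<0$, and in particular the fact that $M(t)$ may be read off a solution $z$ of~\eqref{eq:SistemaAuxiliar} instead of off $N$: this is exactly where H\ref{cond-CC1}), H\ref{cond-CC3}) and the independence statement in Lemma~\ref{lemma:indep} are needed, and one must check that the relevant triples stay in $\Delta_{0,K}$ so that the monotonicity hypotheses apply. Everything else — the comparison inequality for $I$ and the blockwise integral estimate — is of the same elementary type already used in the proof of Proposition~\ref{subsection:BS}, the only other point deserving care being the passage from ``$W'$ has a fixed sign at every large zero of $W$'' to ``$W$ is eventually of one sign'', including the degenerate subcase noted above.
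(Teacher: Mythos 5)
Your proposal is correct and follows essentially the same route as the paper: the sign of $W'(p,\cdot)$ at zeros of $W(p,\cdot)$ is controlled via H\ref{cond-CC1}) and H\ref{cond-CC3}) by $\lim_{\delta\to0^+}g_\delta(p,t,N(t))$ (for $G(p)<0$) or bounded below by $p\,h(p,t)E(t)$ (for $H(p)>0$), with $z=N$ justified by Lemma~\ref{lemma:indep}, forcing an eventual single sign; and the alternative $W>0$ is excluded exactly as in the paper by integrating $I'/I\ge \epsilon/p-\mu-\gamma$ over blocks of length $\lambda$ and using $R_p^*(\lambda,p)>1$ against boundedness of $I$. The only cosmetic difference is that you package the computation as an identity for $W'$ and an ``at most one late zero'' argument instead of the paper's direct contradiction at an oscillation point, which is the same estimate in slightly cleaner form.
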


We say that the infectives go to extinction in in system~\eqref{eq:ProblemaPrincipal} if
    $$\lim_{t \to +\infty} I(t)=0$$
and we say that the infectives are strongly persistent in system~\eqref{eq:ProblemaPrincipal} if
    $$\liminf_{t \to +\infty} I(t)>0.$$

We now state our main theorem on the extinction and strong persistence of the infectives in system~\eqref{eq:ProblemaPrincipal}.

\begin{theorem}\label{teo:Main}
We have the following for system~\eqref{eq:ProblemaPrincipal}.
\begin{enumerate}
\item \label{teo:Extinction} If there are constants $\lambda>0$ and $p >0$ such that $R_e(\lambda,p)<1$, $R^*_e(\lambda,p)<1$ and $G(p)<0$
then the infectives $I$ go to extinction.
\item If there are constants $\lambda>0$ and $p >0$ such that $R_e(\lambda,p)<1$, $R^*_e(\lambda,p)<1$ and  $H(p)>0$
then the infectives $I$ go to extinction.
\item \label{teo:Permanence} If there are constants $\lambda>0$ and $p >0$ such that $R_p(\lambda,p)>1$, $R^*_p(\lambda,p)>1$ and $G(p)<0$ then the infectives $I$ are strongly persistent.
\item If there are constants $\lambda>0$ and $p >0$ such that $R_p(\lambda,p)>1$, $R^*_p(\lambda,p)>1$ and $H(p)>0$ then the infectives $I$ are strongly persistent.
\item \label{teo:Extinction-behavior} In the assumptions of~\ref{teo:Extinction}. any disease-free
solution $(S_1(t),0,0,R_1(t))$ is globally asymptotically stable.
\end{enumerate}
\end{theorem}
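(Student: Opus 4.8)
The plan is to prove the five parts of Theorem~\ref{teo:Main} in the order in which they build on each other, using the auxiliary equation~\eqref{eq:SistemaAuxiliar}, the monotonicity assumptions H\ref{cond-CC1})--H\ref{cond-CC3}), and the two technical lemmas. The central device is the function $W(p,t)=pE(t)-I(t)$: by Lemma~\ref{lemma:tech}, under $G(p)<0$ or $H(p)>0$ the sign of $W(p,\cdot)$ is eventually constant, and under the stronger hypothesis (adding $R_p(\lambda,p)>1$ and $R_p^*(\lambda,p)>1$) it is eventually $\le 0$, i.e.\ $I(t)\ge pE(t)$ for large $t$. The first step is to fix a solution $z(t)$ of~\eqref{eq:SistemaAuxiliar} with $z(0)=S(0)+E(0)+I(0)+R(0)>0$; since $N'=\Lambda(t)-\mu(t)N$ and $N(t)\ge S(t)$, Proposition~\ref{eq:subsectionAS}\ref{cond-2-aux}) gives $|N(t)-z(t)|\to 0$, hence $S(t)\le N(t)\le z(t)+o(1)$, and by H\ref{cond-CC1}) (monotonicity of $x\mapsto\phi(x,x,z)$ and of $x\mapsto\phi(x,n,z)$) we can bound $\phi(S,N,I)\le \phi(z(t),z(t),I)+(\text{error})$, where the error is controlled by Lemma~\ref{lemma:indep}(1) and H\ref{cond-CC2}).

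\textbf{Parts (1)--(2): extinction.} Assume $R_e(\lambda,p)<1$, $R_e^*(\lambda,p)<1$, and ($G(p)<0$ or $H(p)>0$). By Lemma~\ref{lemma:tech} either $W(p,t)>0$ eventually or $W(p,t)\le0$ eventually. In the first case $I(t)<pE(t)$ for $t\ge T$, and then the $I'$ equation together with the $E'$ equation yields, after using $\epsilon(t)E\ge \epsilon(t)I/p$,
\[
(pE+I)'\le \big(pE+I\big)\Big(\tfrac{\epsilon(t)}{p}-\mu(t)-\gamma(t)\Big)+\big(p\beta(t)\phi(S,N,I)-pE\,\mu(t)-\dots\big),
\]
so comparing with the linearised equation whose Floquet-type number over windows of length $\lambda$ is $R_e^*(\lambda,p)<1$ forces $pE(t)+I(t)\to0$, hence $I(t)\to 0$. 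In the second case $I(t)\ge pE(t)$ for $t\ge T$; then using $E(t)\le I(t)/p$ and $\phi(S,N,I)\le M\,I$ in a neighbourhood of $I=0$ (from~\eqref{cond-CC4}), the $E$ equation gives
\[
(pE+I)'\le \big(pE+I\big)\Big(\beta(t)\tfrac{\phi(z(t),z(t),\delta)}{\delta}p-\mu(t)-\epsilon(t)\Big)+o(pE+I),
\]
and the hypothesis $R_e(\lambda,p)<1$ (which by Lemma~\ref{lemma:indep}(2) does not depend on $z$) again forces exponential decay of $pE+I$ along windows of length $\lambda$, so $I(t)\to0$. The letting-$\delta\to0^+$ step is legitimate because of the uniform convergence in H\ref{cond-CC5}) together with~\eqref{cond-CC4}.

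\textbf{Parts (3)--(4): strong persistence.} Now assume $R_p(\lambda,p)>1$, $R_p^*(\lambda,p)>1$ and ($G(p)<0$ or $H(p)>0$). By the second statement of Lemma~\ref{lemma:tech} we get $W(p,t)\le 0$, i.e.\ $I(t)\ge pE(t)$ eventually, so it suffices to bound $E$ (equivalently $pE+I$) away from $0$. The argument is the standard uniform-persistence contradiction: if $\liminf I(t)=0$, then along a subsequence $I$ (hence also $E$ and, by~\eqref{cond-CC4}, $\phi(S,N,I)$) is small on long intervals; on such intervals $S(t)$ approaches $z(t)$ from below because the incidence loss in the $S$ equation is negligible, so $\phi(S,N,I)\ge \phi(z(t)-\epsilon',z(t),I)$, and then the $E,I$ subsystem satisfies a lower comparison whose growth number over length-$\lambda$ windows is governed by $R_p(\lambda,p)>1$ and $R_p^*(\lambda,p)>1$. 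This produces growth of $pE+I$, contradicting its smallness and establishing $\liminf_{t\to\infty}I(t)>0$; the two conditions $R_p,R_p^*>1$ are both needed because the sign of $W$ may have forced us through either the "$\epsilon E\ge \epsilon I/p$'' branch or the "$E\le I/p$'' branch.

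\textbf{Part (5): global asymptotic stability of the disease-free solution.} Under the assumptions of part~(1) we already know $I(t)\to0$, and then from the $I'$ and $E'$ equations $E(t)\to0$ as well (feed $I\to0$ into $I'=\epsilon E-(\mu+\gamma)I$ and use $\mu_{\omega_\mu}^->0$, or directly $E\le I/p$ when $W\le0$, and the $R_e^*$ estimate when $W>0$). Consequently the $(S,R)$ part of~\eqref{eq:ProblemaPrincipal} is an asymptotically autonomous perturbation of the planar system $S'=\Lambda(t)-\mu(t)S+\eta(t)R$, $R'=\gamma(t)I-(\mu(t)+\eta(t))R$ with the $\beta\phi$ and $\gamma I$ terms tending to $0$; writing it as~\eqref{eq:sistauxperturb}-type perturbations and invoking Proposition~\ref{eq:subsectionAS}\ref{cond-3-aux}) (the input-to-state estimate $\sup|\tilde z-z|\le D\sup|f|$) together with its global uniform attractivity~\ref{cond-2-aux}), we conclude that $(S(t),R(t))$ converges to the corresponding disease-free solution $(S_1(t),R_1(t))$, which is therefore globally asymptotically stable.

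\textbf{Main obstacle.} The delicate point is making rigorous the passage from the nonlinear incidence $\phi(S,N,I)$ to the linear bounds $\beta(t)\tfrac{\phi(z(t),z(t),\delta)}{\delta}p$ uniformly in time: one must simultaneously (i) replace $S,N$ by the single attracting solution $z(t)$ using H\ref{cond-CC1}) and the Lipschitz estimate~\eqref{eq:bdela}, (ii) control the error on the set where $S$ (or $I$) is close to $0$, where H\ref{cond-CC2}) degenerates, and (iii) interchange the $\limsup/\liminf$ over length-$\lambda$ windows with the limit $\delta\to0^+$, which is exactly what H\ref{cond-CC5}) and~\eqref{cond-CC4} are designed to permit. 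Getting these three uniformities to hold on the same large-time interval, and threading them through both branches of Lemma~\ref{lemma:tech}, is where the bulk of the work lies.
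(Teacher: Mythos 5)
Your overall architecture is the same as the paper's (the dichotomy on the sign of $W(p,t)$ from Lemma~\ref{lemma:tech}, exponential window estimates for extinction, a contradiction argument with $S$ close to $N$ for persistence, and a comparison argument for part~(5)), but in the extinction part you have paired the two branches with the wrong conditions, and the differential inequalities you write down are not valid. In the branch $W(p,t)>0$, i.e. $I<pE$, the inequality $\epsilon(t)E\ge\epsilon(t)I/p$ is a \emph{lower} bound on $\epsilon E$, so it cannot yield the upper bound $(pE+I)'\le(pE+I)\bigl(\tfrac{\epsilon}{p}-\mu-\gamma\bigr)+\cdots$; if you compute the remainder it is $p\beta\phi+\gamma pE-p\epsilon E-\epsilon I/p$, which is neither small nor sign-controlled, so $R_e^*(\lambda,p)<1$ cannot be used in this branch. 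This branch must be closed through the $E$ equation alone: by H\ref{cond-CC1}) and H\ref{cond-CC3}), $\phi(S,N,I)\le \lim_{\delta\to0^+}\frac{\phi(N,N,\delta)}{\delta}\,I<\lim_{\delta\to0^+}\frac{\phi(N,N,\delta)}{\delta}\,pE$, hence $E'\le\lim_{\delta\to0^+}b_\delta(p,t,N(t))\,E$, and $R_e(\lambda,p)<1$ forces $E\to0$, whence $I<pE\to0$. Symmetrically, in the branch $W(p,t)\le0$, i.e. $E\le I/p$, one uses only the $I$ equation, $I'\le(\epsilon/p-\mu-\gamma)I$, together with $R_e^*(\lambda,p)<1$; your attempt to use $R_e$ there via $(pE+I)'$ fails because the term $-(\mu+\gamma)I$ cannot be absorbed into $-(\mu+\epsilon)(pE+I)$ without an unjustified sign condition. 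So the roles of $R_e$ and $R_e^*$ must be swapped between your two cases (the same swap reappears in your sketch of part~(5), although there it is harmless, since once $I\to0$ one gets $E\to0$ directly from the $E$ equation and the bound $\phi\le MI$ of~\eqref{cond-CC4}).

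In the persistence part the essential quantitative content is missing. The assertion that $\liminf_{t\to\infty}I(t)=0$ gives intervals of large length on which $I$ (hence $E$, $R$) is small is not justified — handling short excursions of $I$ below a fixed level is exactly the difficulty. The argument has to be run in two stages: first prove $\limsup_{t\to\infty}I(t)>\eps_2$ by contradiction (if $I\le\eps_2$ eventually, one must \emph{prove}, via window estimates of the type~\eqref{cond-int-1-contrad}--\eqref{cond-int-2-contrad}, that $E$ and $R$ are eventually bounded by explicit small constants, deduce $S\ge\theta_1>0$ and $N-S$ small, use H\ref{cond-CC2}) to replace $\phi(S,N,\delta_2)$ by $\phi(N,N,\delta_2)-\kappa\delta_2$ with $\kappa$ small compared with the margin $\eta$ coming from $R_p(\lambda,p)>1$, and then obtain $E\to+\infty$, contradicting boundedness); and second, an oscillation argument with an explicit waiting constant $C$ as in~\eqref{eq:def-C}, using $I'\ge-(\mu_S+\gamma_S)I$ on short excursions and regrowth of $E$ (transferred to $I$ through $W\le0$) on long ones, to upgrade $\limsup>\eps_2$ to $\liminf\ge\ell>0$. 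Also note that $R_p^*(\lambda,p)>1$ is not a second "growth number" for the comparison system: its only role is, inside Lemma~\ref{lemma:tech}, to exclude the branch $W(p,t)>0$, after which the growth comes from $R_p(\lambda,p)>1$ alone. As it stands, your proposal names the right ingredients but the extinction step contains a wrong pairing and the persistence step omits the argument that actually carries the proof.
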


We also want to discuss the robustness of the conditions $R_e(\lambda,p)>0$, $R^*_e(\lambda,p)>0$, $R_p(\lambda,p)<0$, $R^*_p(\lambda,p)<0$, $H(p)>0$ and $G(p)<0$, i.e., roughly speaking if for sufficiently small perturbations of the parameters of our model in some admissible family of functions the conditions above are preserved. We will consider differentiable functions $\phi$.

Consider the family of systems
\begin{equation}\label{eq:ProblemaFamilia}
\begin{cases}
S'=\Lambda(t)-\beta_\tau(t)\,\phi_\tau(S,N,I)-\mu(t) S+\eta_\tau(t)R\\
E'=\beta_\tau(t)\,\phi_\tau(S,N,I)-(\mu(t)+\epsilon_\tau(t))E\\
I'=\epsilon_\tau(t)E -(\mu(t)+\gamma_\tau(t))I \\
R'=\gamma_\tau(t)I-(\mu(t)+\eta_\tau(t))R \\
N=S+E+I+R
\end{cases},
\end{equation}
where $\tau \in [-\zeta,\zeta]$ and we assume that, making $\tau=0$, we have $\phi_0=\phi$, $\beta_0=\beta$, $\eta_0=\eta$, $\epsilon_0=\epsilon$ and $\gamma_0=\gamma$ and that, for $\tau=0$ the parameters satisfy our assumptions (i.e. for $\tau=0$ we have our original system~\eqref{eq:ProblemaPrincipal}). We also assume that for each $\tau \in [-\zeta,\zeta]$ the parameter functions $\beta_\tau$, $\eta_\tau$, $\epsilon_\tau$ and $\gamma_\tau$ are continuous and bounded in $\R_0^+$, that $\phi_\tau$ is differentiable in $\Delta_{0,K}$ and that $\phi_\tau(x,n,0)=0$.

For $g:\R_0^+ \to \R$ denote by $\|\cdot\|_\infty$ the supremum norm (given by $\| g \|_\infty = \sup_{t \ge 0} |g(t)|$) and for $f:(\R_0^+)^3 \to \R$ denote by $\|\cdot\|_{\Delta_{0,K}}$ the $C^1$ norm of the restriction $f|_{\Delta_{0,K}}$:
$$\displaystyle \| f \|_{\Delta_{0,K}} = \max_{x \in \Delta_{0,K}} |f(x)| + \max_{x \in \Delta_{0,K}} \|d_x f\|.$$

Denote by $R_e^\tau(\lambda,p)$, $R_p^\tau(\lambda,p)$, $\left(R_e^*\right)^\tau(\lambda,p)$, $\left(R_p^*\right)^\tau(\lambda,p)$, $G^\tau_p(\lambda)$ and $H^\tau_p(\lambda)$, respectively the numbers~\eqref{eq:relambdap},~\eqref{eq:rplambdap},~\eqref{eq:r*elambdap},~\eqref{eq:r*plambdap}~\eqref{eq:Gp} and~\eqref{eq:Hp} with respect to the $\tau$ system in our family of models.

We have the following result on the robustness of conditions $R_e(\lambda,p)>0$, $R^*_e(\lambda,p)>0$, $R_p(\lambda,p)<0$, $R^*_p(\lambda,p)<0$, $H(p)>0$ and $G(p)<0$.

\begin{theorem}\label{teo:structural_stability}
Assume that $\|\beta_\tau-\beta\|_\infty$, $\|\eta_\tau-\eta\|_\infty$, $\|\epsilon_\tau-\epsilon\|_\infty$, $\|\gamma_\tau-\gamma\|_\infty $ and $\|\phi_\tau-\phi\|_{\Delta_{0,K}}$ converge to $0$ as $\tau \to 0$. Then there is $L>0$ such that, for all $\tau \in [-L,L]$, the numbers
$$\left|G^\tau(p)-G(p)\right|, \quad \left|H^\tau(p)-H(p)\right|, \quad \left|R_e^\tau(\lambda,p)-R_e(\lambda,p)\right|, $$
$$\left|R_p^\tau(\lambda,p)-R_p(\lambda,p)\right|, \quad \left|\left(R_e^*\right)^\tau(\lambda,p)-R_e^*(\lambda,p)\right| \quad  \text{and} \quad \left|\left(R_p^*\right)^\tau(\lambda,p)-R_p^*(\lambda,p)\right|$$
converge to $0$ as $\tau \to 0$.
\end{theorem}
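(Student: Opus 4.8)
The plan is to reduce the whole statement to one elementary fact: each of the six quantities in the conclusion is produced from an integrand built only out of the parameter functions and the fixed solution $z(t)$ of the auxiliary equation~\eqref{eq:SistemaAuxiliar}, by integrating over a window of length $\lambda$, taking a $\limsup$ or $\liminf$ as $t\to+\infty$, and (in four of the six cases) composing with the continuous operator $\Expo$. So it suffices to show that the corresponding integrands of the $\tau$--system converge \emph{uniformly on a half--line} to those of the $\tau=0$ system as $\tau\to0$, and then to propagate that uniform bound through integration, through $\limsup/\liminf$, and through $\Expo$.

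The first point is that $\Lambda$ and $\mu$ are \emph{not} perturbed in the family~\eqref{eq:ProblemaFamilia}, so~\eqref{eq:SistemaAuxiliar} is the same for every $\tau$ and we may fix once and for all one solution $z(t)$ with $z(0)>0$; note also that $\eta_\tau$ occurs in none of the six quantities, so the hypothesis $\|\eta_\tau-\eta\|_\infty\to0$ is in fact unused. By Proposition~\ref{eq:subsectionAS} (as in the remark following Proposition~\ref{subsection:BS}) there is $T_0\ge0$ with $0<z(t)<K$ for all $t\ge T_0$, hence $(z(t),z(t),0)\in\Delta_{0,K}$ for $t\ge T_0$; this is exactly the compact set on which $\|\cdot\|_{\Delta_{0,K}}$ controls $\phi_\tau-\phi$ together with its first derivatives. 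Since each $\phi_\tau$ is differentiable with $\phi_\tau(x,n,0)=0$, we have $\lim_{\delta\to0^+}\phi_\tau(z(t),z(t),\delta)/\delta=\dfrac{\partial\phi_\tau}{\partial z}(z(t),z(t),0)$ and likewise for $\phi$, so the inner limits in~\eqref{eq:relambdap},~\eqref{eq:rplambdap} and~\eqref{eq:Gp} are honest pointwise--in--$t$ values of bounded continuous functions.

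Next I would estimate the integrand differences. From $\|\phi_\tau-\phi\|_{\Delta_{0,K}}\to0$ one gets $\|\phi_\tau\|_{\Delta_{0,K}}\le\|\phi\|_{\Delta_{0,K}}+1=:C$ for all small $\tau$, whence $\sup_{\Delta_{0,K}}\bigl|\tfrac{\partial\phi_\tau}{\partial z}\bigr|\le C$; then, writing $\beta_\tau\tfrac{\partial\phi_\tau}{\partial z}-\beta\tfrac{\partial\phi}{\partial z}=(\beta_\tau-\beta)\tfrac{\partial\phi_\tau}{\partial z}+\beta\bigl(\tfrac{\partial\phi_\tau}{\partial z}-\tfrac{\partial\phi}{\partial z}\bigr)$ and using boundedness of $\beta$, one obtains for every $t\ge T_0$
$$\left|\lim_{\delta\to0^+}b_\delta^\tau(p,t,z(t))-\lim_{\delta\to0^+}b_\delta(p,t,z(t))\right|\le p\bigl(C\|\beta_\tau-\beta\|_\infty+\beta_S\|\phi_\tau-\phi\|_{\Delta_{0,K}}\bigr)+\|\epsilon_\tau-\epsilon\|_\infty=:\rho(\tau).$$
Entirely analogous uniform bounds hold for the integrands of $\left(R_e^*\right)^\tau$ and $\left(R_p^*\right)^\tau$ (only $\|\epsilon_\tau-\epsilon\|_\infty/p$ and $\|\gamma_\tau-\gamma\|_\infty$ enter), for $\lim_{\delta\to0^+}g_\delta^\tau(p,t,z(t))$ (the same $\beta_\tau\tfrac{\partial\phi_\tau}{\partial z}$ term together with $\|\gamma_\tau-\gamma\|_\infty$ and $(1+1/p)\|\epsilon_\tau-\epsilon\|_\infty$), and for $h^\tau(p,t)$. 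In each case the bound is a constant, independent of $t\ge T_0$, that tends to $0$ as $\tau\to0$ by the convergence hypotheses.

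Finally I would propagate these estimates. Integrating over $[t,t+\lambda]$ multiplies each bound by $\lambda$, and the elementary inequalities $|\limsup f-\limsup g|\le\sup_{t\ge T_0}|f-g|$ and $|\liminf f-\liminf g|\le\sup_{t\ge T_0}|f-g|$ then bound the differences of the exponents defining $R_e^\tau,R_p^\tau,\left(R_e^*\right)^\tau,\left(R_p^*\right)^\tau$ by quantities of the form $\lambda$ times the uniform bounds above, all tending to $0$; since these exponents stay in a fixed bounded interval for small $\tau$ and $\Expo$ is continuous, $|R_e^\tau-R_e|$, $|R_p^\tau-R_p|$, $\bigl|\left(R_e^*\right)^\tau-R_e^*\bigr|$ and $\bigl|\left(R_p^*\right)^\tau-R_p^*\bigr|$ converge to $0$ as $\tau\to0$, while the $G$ and $H$ cases follow directly from the same $\limsup/\liminf$ inequality with no exponential involved. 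Choosing $L>0$ small enough that all the above estimates hold throughout $[-L,L]$ completes the argument. The only point that genuinely needs care is the very first one — ensuring that $(z(t),z(t),0)$ eventually lies in the compact set $\Delta_{0,K}$, where the $C^1$ control of $\phi_\tau$ and the uniform bound on $\tfrac{\partial\phi_\tau}{\partial z}$ are available — which is precisely what Proposition~\ref{eq:subsectionAS} supplies; everything after that is routine uniform--convergence bookkeeping and uses no further dynamical information.
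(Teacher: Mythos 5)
Your proposal is correct and follows essentially the same route as the paper: a uniform-in-$t$ bound on the difference of the integrands in terms of $\|\beta_\tau-\beta\|_\infty$, $\|\epsilon_\tau-\epsilon\|_\infty$, $\|\gamma_\tau-\gamma\|_\infty$ and $\|\phi_\tau-\phi\|_{\Delta_{0,K}}$, then integration over $[t,t+\lambda]$, the $\limsup/\liminf$ stability inequality, and continuity of $\Expo$. The only cosmetic difference is that you pass to the limit $\delta\to0^+$ first and compare $\partial_3\phi_\tau(z,z,0)$ with $\partial_3\phi(z,z,0)$ directly via the $C^1$ norm, whereas the paper estimates $|b_\delta^\tau-b_\delta|$ for each $\delta$ using a Taylor expansion with remainders before letting $\delta\to0^+$; the content is the same.
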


The following is an immediate corollary.

\begin{corollary}
There is $L>0$ such that for all $\tau \in [-L,L]$ we have.
\begin{enumerate}
\item \label{teo:Ext} If there are constants $\lambda>0$ and $p >0$ such that $R_e(\lambda,p)<1$, $R^*_e(\lambda,p)<1$ and $G(p)<0$
then the infectives $I$ go to extinction in system~\eqref{eq:ProblemaFamilia}.
\item If there are constants $\lambda>0$ and $p >0$ such that $R_e(\lambda,p)<1$, $R^*_e(\lambda,p)<1$ and $H(p)>0$
then the infectives $I$ go to extinction in system~\eqref{eq:ProblemaFamilia}.
\item If there are constants $\lambda>0$ and $p >0$ such that $R_p(\lambda,p)>1$, $R^*_p(\lambda,p)>1$ and $G(p)<0$
then the infectives $I$ are strongly persistent in system~\eqref{eq:ProblemaFamilia}.
\item If there are constants $\lambda>0$ and $p >0$ such that $R_p(\lambda,p)>1$, $R^*_p(\lambda,p)>1$ and $H(p)>0$
then the infectives $I$ are strongly persistent in system~\eqref{eq:ProblemaFamilia}.
\item In the assumptions of~\ref{teo:Ext}. any disease-free
solution $(S_1(t),0,0,R_1(t))$ is globally asymptotically stable in system~\eqref{eq:ProblemaFamilia}.
\end{enumerate}
\end{corollary}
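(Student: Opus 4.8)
The plan is to obtain the corollary as a direct combination of Theorem~\ref{teo:Main} and Theorem~\ref{teo:structural_stability}: every hypothesis occurring in Theorem~\ref{teo:Main} is a \emph{strict} inequality for one of the six numbers $R_e$, $R_p$, $R_e^*$, $R_p^*$, $G$, $H$, and strict inequalities survive the small changes of those numbers guaranteed by Theorem~\ref{teo:structural_stability}. I would also note at the outset that the auxiliary equation~\eqref{eq:SistemaAuxiliar} involves only $\Lambda$ and $\mu$, which are not perturbed, so the solution $z(t)$ used to define the $\tau$-numbers may be taken independent of $\tau$ (and by Lemma~\ref{lemma:indep} the relevant numbers do not depend on this choice anyway).

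First I would treat item~\ref{teo:Ext}. Assume its hypothesis: there are $\lambda>0$, $p>0$ with $R_e(\lambda,p)<1$, $R^*_e(\lambda,p)<1$ and $G(p)<0$. Since these are strict, pick $\rho>0$ with $R_e(\lambda,p)<1-\rho$, $R^*_e(\lambda,p)<1-\rho$ and $G(p)<-\rho$. By Theorem~\ref{teo:structural_stability}, $|R_e^\tau(\lambda,p)-R_e(\lambda,p)|$, $|(R_e^*)^\tau(\lambda,p)-R_e^*(\lambda,p)|$ and $|G^\tau(p)-G(p)|$ all tend to $0$ as $\tau\to0$, hence there is $L_1=L_1(\lambda,p)>0$ such that $R_e^\tau(\lambda,p)<1$, $(R_e^*)^\tau(\lambda,p)<1$ and $G^\tau(p)<0$ for every $\tau\in[-L_1,L_1]$. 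Next I would check that, for $\tau$ small, the $\tau$-system~\eqref{eq:ProblemaFamilia} is an admissible instance of~\eqref{eq:ProblemaPrincipal}: condition~\eqref{eq:d-Lambda-} for $\Lambda$ and $\mu$ is untouched, while for $\beta_\tau$ it follows from $(\beta_\tau)_{\omega_\beta}^-\ge\beta_{\omega_\beta}^- -\|\beta_\tau-\beta\|_\infty>0$ once $\|\beta_\tau-\beta\|_\infty$ is small, and the structural hypotheses H\ref{cond-CC1})--H\ref{cond-CC2}) on $\phi_\tau$ are part of the admissibility of the perturbation family. After shrinking $L_1$ if necessary, Theorem~\ref{teo:Main}\,(\ref{teo:Extinction}) applied to the $\tau$-system then yields extinction of the infectives in~\eqref{eq:ProblemaFamilia}, which is item~\ref{teo:Ext}.

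The remaining items are proved in exactly the same way: items~2--4 use the second, third and fourth parts of Theorem~\ref{teo:Main}, invoking the convergences $H^\tau(p)\to H(p)$, $R_p^\tau(\lambda,p)\to R_p(\lambda,p)$ and $(R_p^*)^\tau(\lambda,p)\to R_p^*(\lambda,p)$ from Theorem~\ref{teo:structural_stability} to preserve the corresponding strict inequalities, and produce constants $L_2,L_3,L_4>0$; the last item is Theorem~\ref{teo:Main}\,(\ref{teo:Extinction-behavior}) applied to the $\tau$-system under the hypotheses of item~\ref{teo:Ext}, needing no constant beyond $L_1$. Taking $L=\min\{L_1,L_2,L_3,L_4\}$ (each $L_i$ being allowed to depend on the $\lambda,p$ witnessing the corresponding hypothesis) completes the argument.

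I expect the only non-formal point — the ``hard part'' — to be the verification that the perturbed systems still satisfy the \emph{qualitative} standing assumptions required by Theorem~\ref{teo:Main}: Theorem~\ref{teo:structural_stability} only delivers convergence of the six threshold numbers, not of conditions H\ref{cond-CC1})--H\ref{cond-CC2}) or~\eqref{eq:d-Lambda-}. For~\eqref{eq:d-Lambda-} this is immediate from the $C^0$-closeness as above, and for H\ref{cond-CC1})--H\ref{cond-CC2}) it is transparent in the concrete families of Section~\ref{section:E}; so once Theorems~\ref{teo:Main} and~\ref{teo:structural_stability} are in hand the corollary is indeed immediate.
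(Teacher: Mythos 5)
Your argument is exactly the paper's: the corollary is stated there as an immediate consequence of combining Theorem~\ref{teo:structural_stability} (which makes the six numbers vary continuously in $\tau$, so the strict inequalities persist for small $\tau$) with Theorem~\ref{teo:Main} applied to the $\tau$-system. Your additional remark that the perturbed systems must still satisfy the standing hypotheses H\ref{cond-CC1})--H\ref{cond-CC2}) and~\eqref{eq:d-Lambda-} is a fair point of care that the paper leaves implicit in its admissibility assumptions on the family~\eqref{eq:ProblemaFamilia}, but it does not change the route.
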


\section{Examples} \label{section:E}

\begin{example}[Autonomous case]
Letting $\Lambda(t)=\Lambda>0$, $\mu(t)=\mu>0$, $\eta(t)=\eta\ge0$, $\epsilon(t)=\epsilon\ge0$, $\gamma(t)=\gamma\ge0$ and $\beta(t)=\beta>0$ in~\eqref{eq:ProblemaPrincipal} and requiring that $\phi$ satisfies~H\ref{cond-CC1}) to H\ref{cond-CC2}) we obtain an autonomous SEIRS model verifying our assumptions. It is easy to see that $z(t)=\Lambda / \mu$ is a solution of~\eqref{eq:SistemaAuxiliar} with positive initial condition in this case. Letting
\begin{equation}\label{def:lim}
L_{\phi,\Lambda,\mu}=\lim_{\delta \to 0^+} \dfrac{\phi(\Lambda/\mu,\Lambda/\mu,\delta)}{\delta},
\end{equation}
we have
\begin{equation*}
  G(p)=\beta p L_{\phi,\Lambda,\mu} +\gamma - (1+1/p)\epsilon, \\[2mm]
\end{equation*}
\begin{equation*}
  H(p)=\gamma - \left(1+\frac{1}{p} \right) \epsilon, \\[2mm]
\end{equation*}
\begin{equation*}
  R_e(\lambda,p)= R_p(\lambda,p) = \Expo \left[ \left( \beta p L_{\phi,\Lambda,\mu} -\mu -\epsilon \right) \lambda\right],
\end{equation*}
and
\begin{equation*}
  R_e^*(\lambda,p)=R_p^*(\lambda,p)=\Expo\left[\left(\epsilon/p-\mu-\gamma\right)\lambda\right].
\end{equation*}
Define
\begin{equation}\label{eq:RA}
  R^{A}= \dfrac{\epsilon \beta \, L_{\phi,\Lambda,\mu}}{(\mu+\epsilon)(\mu+\gamma)}
\end{equation}
The following result is a consequence of Theorem~\ref{teo:Main} in the autonomous case.
\begin{corollary}\label{cor:aut}
We have the following for the autonomous system above.
\begin{enumerate}
\item \label{cor-aut-1} If $R^{A}<1$ then the infectives go to extinction;
\item \label{cor-aut-2} If $R^{A}>1$ then the infectives are strongly persistente;
\item \label{cor-aut-3} The disease free equilibrium $(\Lambda/\mu,0,0,0)$ is globally asymptotically stable.
\end{enumerate}
\end{corollary}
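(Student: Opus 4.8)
The plan is to deduce everything from Theorem~\ref{teo:Main} by checking that in the autonomous setting the relevant quantities reduce to simple expressions in the single threshold number $R^A$. First I would fix the constant solution $z(t)\equiv\Lambda/\mu$ of~\eqref{eq:SistemaAuxiliar}, which by Lemma~\ref{lemma:indep} is a legitimate choice since $R_e$, $R_p$ and $G$ do not depend on the solution. Then, using the explicit formulas already displayed in the statement for $G(p)$, $H(p)$, $R_e(\lambda,p)=R_p(\lambda,p)$ and $R_e^*(\lambda,p)=R_p^*(\lambda,p)$, the task is purely one of choosing an appropriate $p>0$ (and, for the exponential conditions, any $\lambda>0$, since in the autonomous case the value is monotone/exponential in $\lambda$ and the sign of the exponent is what matters).

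For part~\ref{cor-aut-1}, assume $R^A<1$, i.e.\ $\epsilon\beta L_{\phi,\Lambda,\mu}<(\mu+\epsilon)(\mu+\gamma)$. I would look for $p>0$ making simultaneously $\beta p L_{\phi,\Lambda,\mu}-\mu-\epsilon<0$ (so $R_e(\lambda,p)<1$), $\epsilon/p-\mu-\gamma<0$ (so $R_e^*(\lambda,p)<1$), and $G(p)<0$. The first two inequalities read
\[
\frac{\epsilon}{\mu+\gamma}<p<\frac{\mu+\epsilon}{\beta L_{\phi,\Lambda,\mu}},
\]
and such a $p$ exists precisely because $R^A<1$; one can take for instance $p=\sqrt{\epsilon(\mu+\epsilon)/\bigl((\mu+\gamma)\beta L_{\phi,\Lambda,\mu}\bigr)}$, the geometric mean of the endpoints. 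It then remains to verify $G(p)<0$ for this choice, i.e.\ $\beta p L_{\phi,\Lambda,\mu}+\gamma<(1+1/p)\epsilon$; I expect this to follow by combining the two endpoint inequalities (the first gives $\beta p L_{\phi,\Lambda,\mu}<\mu+\epsilon$, the second gives $\epsilon/p>\mu+\gamma$, and adding/rearranging yields $\beta p L_{\phi,\Lambda,\mu}-\epsilon/p<\mu+\epsilon-(\mu+\gamma)=\epsilon-\gamma$, which is exactly $G(p)<0$). With $R_e(\lambda,p)<1$, $R_e^*(\lambda,p)<1$ and $G(p)<0$ in hand, Theorem~\ref{teo:Main}\ref{teo:Extinction} gives extinction. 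For part~\ref{cor-aut-3}, the same $p$ together with Theorem~\ref{teo:Main}\ref{teo:Extinction-behavior} gives global asymptotic stability of the disease-free equilibrium $(\Lambda/\mu,0,0,0)$ (noting that the disease-free solution of the autonomous system with $z(0)=\Lambda/\mu$ is exactly this equilibrium).

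For part~\ref{cor-aut-2}, assume $R^A>1$. Now the same interval of admissible $p$ is reversed: I would pick $p>0$ with $\epsilon/p-\mu-\gamma>0$ and $\beta p L_{\phi,\Lambda,\mu}-\mu-\epsilon>0$, i.e.
\[
\frac{\mu+\epsilon}{\beta L_{\phi,\Lambda,\mu}}<p<\frac{\epsilon}{\mu+\gamma},
\]
which is nonempty exactly when $R^A>1$; again the geometric mean of the endpoints works. This yields $R_p(\lambda,p)>1$ and $R_p^*(\lambda,p)>1$. The extra hypothesis needed is $G(p)<0$ or $H(p)>0$; here, unlike the extinction case, it is cleaner to verify $H(p)>0$, i.e.\ $\gamma>(1+1/p)\epsilon$ — but this need not hold for the chosen $p$, so the \emph{main obstacle} is to secure one of the two sign conditions on $G$ or $H$ together with the two ``$>1$'' conditions. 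I would handle this by noting that $G(p)<0\iff \beta p L_{\phi,\Lambda,\mu}-\epsilon/p<\epsilon-\gamma$: the second persistence inequality $\epsilon/p>\mu+\gamma$ gives $-\epsilon/p<-(\mu+\gamma)$, while $\beta pL_{\phi,\Lambda,\mu}$ can be kept close to $\mu+\epsilon$ by choosing $p$ near the left endpoint of the admissible interval, so that $\beta p L_{\phi,\Lambda,\mu}-\epsilon/p<(\mu+\epsilon)-(\mu+\gamma)=\epsilon-\gamma$ holds, i.e.\ $G(p)<0$; a continuity/openness argument lets us keep all three strict inequalities simultaneously. Then Theorem~\ref{teo:Main}\ref{teo:Permanence} gives strong persistence. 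The only delicate point throughout is this simultaneous choice of $p$, which is why I would state the admissible $p$-intervals explicitly and pick an interior point robust enough to also satisfy the $G<0$ (or $H>0$) constraint; everything else is substitution into the formulas already recorded before the corollary.
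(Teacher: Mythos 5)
Your overall strategy coincides with the paper's: in the autonomous case pick $p$ in the interval determined by $R^A$, check the sign conditions, and invoke Theorem~\ref{teo:Main}; your treatment of part~\ref{cor-aut-2} (evaluate/approach $G$ at the endpoint $(\mu+\epsilon)/(\beta L_{\phi,\Lambda,\mu})$, where $G=(\mu+\gamma)(1-R^A)<0$, and use continuity) and of part~\ref{cor-aut-3} via Theorem~\ref{teo:Main}(\ref{teo:Extinction-behavior}) is exactly the paper's argument.

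There is, however, a genuine gap in part~\ref{cor-aut-1}. For $p$ in the extinction interval $\bigl(\epsilon/(\mu+\gamma),\,(\mu+\epsilon)/(\beta L_{\phi,\Lambda,\mu})\bigr)$ the condition $R_e^*(\lambda,p)<1$ reads $\epsilon/p<\mu+\gamma$, not $\epsilon/p>\mu+\gamma$ as you wrote; with the correct direction your addition argument no longer yields $\beta p L_{\phi,\Lambda,\mu}-\epsilon/p<\epsilon-\gamma$, i.e.\ it does not give $G(p)<0$. In fact $G$ is strictly increasing in $p$ and, when $R^A<1$, $G$ at the right endpoint equals $(\mu+\gamma)(1-R^A)>0$, so $G<0$ holds only on a left portion of the interval and your proposed choice (the geometric mean) can fail. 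Concretely, take $\mu=0.1$, $\epsilon=0.1$, $\gamma=10$, $\beta L_{\phi,\Lambda,\mu}=20$, so $R^A=2/2.02\approx 0.99<1$; the interval is approximately $(0.009901,\,0.01)$, its geometric mean is $p\approx 0.00995$, and $G(p)=20p+9.9-0.1/p\approx 0.049>0$, so for this $p$ the hypotheses of Theorem~\ref{teo:Main}(\ref{teo:Extinction}) are not met (and $H(p)>0$ fails as well). The repair is the one the paper uses (and which you yourself use in the persistence case): compute $G\bigl(\epsilon/(\mu+\gamma)\bigr)=(R^A-1)(\mu+\epsilon)<0$ and, by continuity of $G$, choose $p$ strictly inside the interval but close to the left endpoint, so that $R_e(\lambda,p)<1$, $R_e^*(\lambda,p)<1$ and $G(p)<0$ hold simultaneously.
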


\begin{proof}
    Assuming that $R^A<1$ we have
    $$
    \dfrac{\epsilon \beta}{(\mu+\epsilon)(\mu+\gamma)} L_{\phi,\Lambda,\mu} < 1
    $$
  and thus for all $p>0$ such that
  \[
  \dfrac{\epsilon}{\mu+\gamma} < p < \dfrac{\mu+\epsilon}{\beta \displaystyle L_{\phi,\Lambda,\mu}},
  \]
  we have
  $$\frac{\epsilon}{p} < \mu+\gamma \quad \Leftrightarrow \quad \frac{\epsilon}{p}-\mu-\gamma<0 \quad \Leftrightarrow \quad R_e^*(\lambda,p)<1$$
  and also
  $$\beta p L_{\phi,\Lambda,\mu} < \mu+\epsilon \quad \Leftrightarrow \quad \beta p L_{\phi,\Lambda,\mu} - \mu - \epsilon <0 \quad \Leftrightarrow \quad R_e(\lambda,p)<1.$$
  Since
   $$G\left(\dfrac{\epsilon}{\mu+\gamma}\right)=\beta L_{\phi,\Lambda,\mu} \dfrac{\epsilon}{\mu+\gamma} +\gamma-\left(1+\dfrac{\mu+\gamma}{\epsilon}\right)\epsilon = (R^A-1)(\mu+\epsilon)<0$$ and $G$ is continuous we conclude that there is $p>0$ satisfying $R_e(\lambda,p)<1$, $R_e^*(\lambda,p)<1$ and $G(p)<0$. Thus, by~\ref{teo:Extinction}. in Theorem~\ref{teo:Main}, the infectives go to extinction and we obtain~\ref{cor-aut-1}..

  Assuming now that $R^A>1$ we have
    $$
    \dfrac{\epsilon \beta}{(\mu+\epsilon)(\mu+\gamma)} L_{\phi,\Lambda,\mu} > 1
    $$
  and thus, by the same reasoning, for all $p>0$ such that
  \[
  \dfrac{\epsilon}{\mu+\gamma} > p > \dfrac{\mu+\epsilon}{\beta \displaystyle L_{\phi,\Lambda,\mu}},
  \]
  we have $R_e^*(\lambda,p)>1$ and $R_e(\lambda,p)>1$.
  Since
   $$G\left(\dfrac{\mu+\epsilon}{\beta \displaystyle L_{\phi,\Lambda,\mu}}\right)=\beta L_{\phi,\Lambda,\mu} \dfrac{\mu+\epsilon}{\beta \displaystyle L_{\phi,\Lambda,\mu}} +\gamma-\left(1+\dfrac{\beta \displaystyle L_{\phi,\Lambda,\mu}}{\mu+\epsilon} \right)\epsilon =(\mu+\gamma)\left(1-R^A\right)<0$$
   and $G$ is continuous we conclude that there is $p>0$ satisfying $R_e(\lambda,p)<1$, $R_e^*(\lambda,p)<1$ and $G(p)<0$. Thus, by~\ref{teo:Permanence}. in Theorem~\ref{teo:Main}, the infectives are strongly persistent and we obtain~\ref{cor-aut-2}..

By \ref{teo:Extinction-behavior}. in~Theorem~\ref{teo:Main} we obtain immediatly~\ref{cor-aut-3}..
\end{proof}
Several particular forms for $\phi$ for particular SEIRS or SEIR model have been considered. For instance, in~\cite{Li-Zhou-CSF-2009}, for a SEIR autonomous model under different assumption than ours, an incidence of the form $\phi(S,N,I)=SI/(1+bN)$ with $b>0$ was considered. Also for a SEIR autonomous model~\cite{Driessche-Li-Muldowney-CAMQ-1999} a general incidence of the form $\phi(S,N,I)=g(I)S$ satisfying $g \in C^1$, $g(I)>0$, $g(0)=0$ and $\Lambda=\mu$ was considered. In~\cite{Buonomo-Lacitignola-RM-2008} an incidence of the form $\phi(S,N,I)=IS(1+\alpha I)$ with $\Lambda=\mu$ is considered. We can write our conditions for the previous incidence rates using Corollary~\ref{cor:aut}. For $\phi(S,N,I)=SI/(1+bN)$ we get the threshold $R^A=\epsilon\beta\Lambda/[(\mu+\epsilon)(\mu+\gamma)(\mu+b\Lambda)]$, for $\phi(S,N,I)=g(I)S$ with $g \in C^1$, $g(I)>0$, $g(0)=0$ and $\Lambda=\mu$ we obtain the threshold $R^A=\epsilon\beta g'(0)/[(\mu+\epsilon)(\mu+\gamma)]$ and for $\phi(S,N,I)=IS(1+\alpha I)$ we have the threshold $R^A=\epsilon\beta/[(\mu+\epsilon)(\mu+\gamma)]$.

\end{example}

\begin{example}[Asymptotically autonomous case]
In this section we are going to consider the asymptotically autonomous SEIRS model. That is, additionally to the assumptions on Theorem~\ref{teo:Main}, we are going to assume for system~\eqref{eq:ProblemaPrincipal} that the time-dependent parameters are asymptotically constant: $\mu(t) \to \mu$, $\eta(t) \to \eta$, $\epsilon(t)\to \epsilon$, $\gamma(t) \to \gamma$ and $\beta(t)\to\beta$ as $t \to +\infty$. Denoting by $F(t,x,y,z,w)$ the right hand side of~\eqref{eq:ProblemaPrincipal} and by $F_0(x,y,z,w)$ the right hand side of the limiting system, we also need to assume that
	$$\lim_{t \to +\infty} F(t,x,y,z,w) =F_0(x,y,z,w),$$
with uniform convergence on every compact set of $(\R_0^+)^4$ and we will also assume that $(x,y,z,w) \mapsto F(t,x,y,z,w)$ and $(x,y,z,w) \mapsto F_0(x,y,z,w)$ are locally Lipschitz functions.

There is a general setting that will allow us to study this case.
Namely, let $f:\R \times \R^n \to \R$ and $f_0:\R^n \to \R$ be continuous and locally Lipschitz in $\R^n$. Assume also that the non-autonomous system
\begin{equation}\label{eq:nonaut}
x'=f(t,x)
\end{equation}
is asymptotically autonomous with limit equation
\begin{equation}\label{eq:aut}
x'=f_0(x),
\end{equation}
that is, assume that $f(t,x) \to f_0(x)$ as $t \to +\infty$ with uniform convergence in every compact set of $\R^n$.
The following theorem is a particular case of a result established in~\cite{Markus-CTNO-1956} (for related results and applications see for example~\cite{Chavez-Thieme-MPD-1995, Mischaikow-Smith-Thieme-TAMS-1995}).
\begin{theorem}\label{Markus}
Let $\Phi(t,t_0,x_0)$ and $\phi(t,t_0,y_0)$ be solutions of~\eqref{eq:nonaut} and~\eqref{eq:aut} respectively. Suppose that $e \in \R^n$ is a locally stable equilibrium point of~\eqref{eq:aut} with attractive region
$$W(e)=\left\{y \in \R^n: \lim_{t \to +\infty} \phi(t,t_0,y) = e \right\}$$
and that $W_\Phi \cap W(e) \ne \emptyset$, where $W_\Phi$ denotes the omega limit of $\Phi(t,t_0,x_0)$. Then $\displaystyle \lim_{t \to +\infty} \Phi(t,t_0,x_0)=e$.
\end{theorem}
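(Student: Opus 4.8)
The result is a particular case of the theory developed in~\cite{Markus-CTNO-1956}; here I outline the argument one would give. Write $\Phi(t)=\Phi(t,t_0,x_0)$ and assume, as holds in all our applications by Proposition~\ref{subsection:BS}, that the forward orbit $\{\Phi(t):t\ge t_0\}$ is bounded; then its $\omega$-limit set $W_\Phi$ is a nonempty, compact, connected subset of $\R^n$ and $\mathrm{dist}(\Phi(t),W_\Phi)\to 0$ as $t\to+\infty$. The plan is: (i) show $W_\Phi$ is forward invariant under the limiting autonomous semiflow $\phi(\cdot,0,\cdot)$ of~\eqref{eq:aut}; (ii) deduce $e\in W_\Phi$ from the hypothesis $W_\Phi\cap W(e)\neq\emptyset$; (iii) use the asymptotic stability of $e$ to conclude $W_\Phi=\{e\}$, which is the statement.

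For (i), fix $p\in W_\Phi$ and $t_n\to+\infty$ with $\Phi(t_n)\to p$. For each $n$ the curve $s\mapsto x_n(s):=\Phi(t_n+s)$ solves the shifted equation $x'=f(t_n+s,x)$ with $x_n(0)\to p$. Since $f(t_n+s,x)\to f_0(x)$ uniformly on compact subsets of $\R\times\R^n$ and $f_0$ is locally Lipschitz, a Gronwall comparison of $x_n$ with the solution $s\mapsto\phi(s,0,p)$ of~\eqref{eq:aut} yields $x_n(s)\to\phi(s,0,p)$ uniformly on each compact $s$-interval (boundedness of the orbit confines all these curves to a fixed compact set, so $\phi(\cdot,0,p)$ is global forward in time). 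Hence every $\phi(s,0,p)$, $s\ge 0$, is a limit of $\Phi(t_n+s)$ and so lies in $W_\Phi$.

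For (ii), pick $q\in W_\Phi\cap W(e)$: by (i), $\{\phi(s,0,q):s\ge 0\}\subseteq W_\Phi$, and $q\in W(e)$ gives $\phi(s,0,q)\to e$, so $e\in\overline{W_\Phi}=W_\Phi$. For (iii), I invoke the total stability of $e$, valid because $e$ is a locally asymptotically stable equilibrium of the autonomous system~\eqref{eq:aut}: for every neighborhood $V$ of $e$ there are a smaller neighborhood $U$ of $e$ and a time $T_0$ such that every solution $x(\cdot)$ of $x'=f(t,x)$ with $x(t^*)\in U$ for some $t^*\ge T_0$ satisfies $x(t)\in V$ for all $t\ge t^*$ (this uses that $f(t,\cdot)-f_0(\cdot)\to 0$). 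Since $e\in W_\Phi$, the trajectory $\Phi$ enters $U$ at some $t^*\ge T_0$, hence $\Phi(t)\in V$ for all $t\ge t^*$, so $\limsup_{t\to+\infty}|\Phi(t)-e|\le\mathrm{diam}\,V$. Letting $V$ shrink to $\{e\}$ gives $\Phi(t)\to e$.

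The step needing most care is the total-stability argument in (iii). It genuinely uses that $e$ is \emph{asymptotically} (not merely Lyapunov) stable---otherwise $W_\Phi$ could be a whole invariant neighborhood of $e$---and is most cleanly carried out via a converse Lyapunov function $V_0$ for $e$ with $\dot V_0\le -cV_0$ near $e$ along~\eqref{eq:aut}, estimating $\dot V_0$ along $\Phi$ using $\sup_{x\in V}|f(t,x)-f_0(x)|\to 0$ as $t\to+\infty$. The remaining ingredients---the Gronwall/continuous-dependence estimate of (i) and the boundedness of the forward orbit (for our system, exactly Proposition~\ref{subsection:BS})---are routine.
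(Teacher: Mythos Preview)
The paper does not prove this theorem; it states it as a particular case of the result in~\cite{Markus-CTNO-1956} and refers the reader to~\cite{Chavez-Thieme-MPD-1995, Mischaikow-Smith-Thieme-TAMS-1995} for related material. Your outline is a correct sketch of the classical argument found in that literature: invariance of $W_\Phi$ under the limiting semiflow via a continuous-dependence/Gronwall estimate, the deduction $e\in W_\Phi$ by compactness, and the trapping argument based on total stability of the asymptotically stable equilibrium. Your caveat that step~(iii) genuinely requires \emph{asymptotic} stability (the paper's phrasing ``locally stable equilibrium point \ldots\ with attractive region'' should be read this way) and is cleanest via a converse Lyapunov function is exactly right; the added boundedness hypothesis on the forward orbit is likewise standard and, as you note, is supplied in our setting by Proposition~\ref{subsection:BS}.
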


Since $(\R^+)^4$ is the attractive region for any solution of system~\eqref{eq:ProblemaPrincipal} with initial condition in $(\R^+)^4$ and the omega limit of every orbit of the asymptotically autonomous system with $I(t_0)>0$ is contained in $(\R^+)^4$, we can use Theorem~\ref{Markus} to obtain the following result.

\begin{corollary}
Let $R^A$ be the basic reproductive numbers of the limiting autonomous system, defined by~\eqref{eq:RA}. Then we have the following for the asymptotically autonomous systems above.
\begin{enumerate}
\item If $R^{A}<1$ then the infectives are extinct;
\item If $R^{A}>1$ then the infectives are strongly persistente.
\end{enumerate}
\end{corollary}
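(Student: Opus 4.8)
The plan is to reduce everything to Theorem~\ref{Markus} applied to the four-dimensional system~\eqref{eq:ProblemaPrincipal}, using Corollary~\ref{cor:aut} for the limiting autonomous system. First I would check that the hypotheses of Theorem~\ref{Markus} are met: by assumption $F(t,\cdot)\to F_0(\cdot)$ uniformly on compacts and both right-hand sides are locally Lipschitz, so the asymptotic-autonomy setup applies with $n=4$. By Proposition~\ref{subsection:BS} every solution of~\eqref{eq:ProblemaPrincipal} with nonnegative initial data is nonnegative and eventually enters the bounded region $N\le D$, hence has nonempty compact omega limit $W_\Phi\subset(\R_0^+)^4$; moreover $(\R^+)^4$ is (forward) invariant for both systems. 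The content is then in identifying the relevant equilibrium $e$ of the limiting system and its basin.

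For the extinction case $R^A<1$: by Corollary~\ref{cor:aut}\,\ref{cor-aut-3}) the disease-free equilibrium $e=(\Lambda/\mu,0,0,0)$ of the limiting autonomous system is globally asymptotically stable on $(\R^+)^4$ — in particular it is locally stable with attractive region $W(e)\supseteq(\R^+)^4$ (one should note that solutions starting with $I=0$ but positive elsewhere also converge to $e$, so in fact $W(e)$ contains the whole nonnegative orthant minus, at worst, lower-dimensional pieces that the omega limit of a solution with positive initial data cannot reach). Since $W_\Phi$ is a nonempty subset of $(\R^+)^4\subseteq W(e)$, we get $W_\Phi\cap W(e)\ne\emptyset$, so Theorem~\ref{Markus} gives $\Phi(t,t_0,x_0)\to e$; in particular $I(t)\to 0$, i.e.\ the infectives are extinct. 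This handles item~1.

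For the persistence case $R^A>1$: here the limiting autonomous system is \emph{uniformly persistent} in $I$ by Corollary~\ref{cor:aut}\,\ref{cor-aut-2}), so no interior equilibrium is globally attractive and Theorem~\ref{Markus} cannot be used in the same direct way to conclude a limit. The cleanest route is to argue by contradiction: if the infectives were not strongly persistent for the non-autonomous system, i.e.\ $\liminf_{t\to\infty}I(t)=0$, then the omega limit $W_\Phi$ would meet the invariant boundary face $\{I=0\}$; but on that face the non-autonomous dynamics is asymptotic to the limiting dynamics on $\{I=0\}$, which (being a linear SEIRS-type subsystem with $E\to0$, $I\equiv0$) drives every orbit to the disease-free equilibrium $(\Lambda/\mu,0,0,0)$. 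One then invokes the chain-recurrence / Butler–McGehee type argument (as in the references \cite{Chavez-Thieme-MPD-1995,Mischaikow-Smith-Thieme-TAMS-1995} cited after Theorem~\ref{Markus}) together with the fact that the disease-free equilibrium is a \emph{repeller} transversally to $\{I=0\}$ when $R^A>1$, to conclude that $W_\Phi$ cannot actually contain the disease-free equilibrium, a contradiction; hence $\liminf_{t\to\infty}I(t)>0$. The main obstacle is precisely this persistence half: one must make rigorous that asymptotic autonomy plus instability of the disease-free state along the $I$-direction (which is exactly what $R^A>1$ encodes, via $G(\epsilon/(\mu+\gamma))=(R^A-1)(\mu+\epsilon)>0$ in the autonomous setting) forbids the omega limit from clinging to $\{I=0\}$ — either by quoting the appropriate persistence-for-asymptotically-autonomous-systems theorem from the cited literature, or by a direct linearization estimate near $(\Lambda/\mu,0,0,0)$ showing $I$ grows exponentially once $I,E$ are small but positive and $t$ is large.
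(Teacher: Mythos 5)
Your item 1 is essentially the paper's argument: the paper proves the whole corollary by a single appeal to Theorem~\ref{Markus}, preceded by the remark that the relevant basin of the limiting system is $(\R^+)^4$ and that omega limits of solutions with $I(t_0)>0$ meet it; for $R^A<1$ this is exactly your combination of Corollary~\ref{cor:aut}, item~\ref{cor-aut-3}, with Proposition~\ref{subsection:BS}. The divergence, and the genuine gap, is in item 2. You rightly observe that Theorem~\ref{Markus} only gives convergence to a locally stable equilibrium of the limit system and so cannot by itself produce strong persistence, but what you put in its place is a pointer, not a proof: the decisive step you yourself flag --- that the omega limit of an orbit with $I(t_0)>0$ cannot contain points of the disease-free set --- is precisely what has to be established, and neither the Butler--McGehee/chain-recurrence route via \cite{Chavez-Thieme-MPD-1995,Mischaikow-Smith-Thieme-TAMS-1995} nor the ``direct linearization estimate'' is carried out. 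Persistence of the limit system does \emph{not} transfer automatically to an asymptotically autonomous system; the theorems you would quote require verifying additional structure of the boundary dynamics (isolated, acyclic covering of the boundary omega-limit sets, repulsion of the disease-free equilibrium in the right directions), none of which you set up. There is also a concrete slip: $\{I=0\}$ is not an invariant face of the limit flow (on it $I'=\epsilon E>0$ whenever $E>0$); the invariant set is $\{E=I=0\}$, so the transversal-repulsion argument must be run jointly in the $(E,I)$ directions --- this is what $R^A>1$ controls, e.g.\ through the sign of $G(\epsilon/(\mu+\gamma))$ --- not in the $I$ direction alone. As written, item 2 is a plan rather than a proof.

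A complete and more elementary repair, closer in spirit to the rest of the paper, is to bypass asymptotic-autonomy theory for item 2 (and indeed for item 1 as far as extinction of $I$ is concerned) by applying Theorem~\ref{teo:Main} directly to the asymptotically autonomous system: since all coefficients converge to constants and every positive solution $z(t)$ of~\eqref{eq:SistemaAuxiliar} converges to $\Lambda/\mu$, while H\ref{cond-CC5}) makes $(x,n)\mapsto\lim_{\delta\to0^+}\phi(x,n,\delta)/\delta$ continuous, the quantities $R_e$, $R_p$, $R_e^*$, $R_p^*$, $G$ and $H$ coincide with their autonomous values, so the computations in the proof of Corollary~\ref{cor:aut} carry over verbatim and give extinction when $R^A<1$ and strong persistence when $R^A>1$. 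Theorem~\ref{Markus} is then only needed if one additionally wants convergence of solutions to the disease-free equilibrium in the subthreshold case, which is how the paper uses it.
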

\end{example}

\begin{example}[Periodic model with constant $\Lambda$, $\mu$]
Next we assume that some model coefficients are periodic functions with the same period, namely we assume that there is $\omega>0$ such that, for all $t\ge 0$, we have $\eta(t)=\eta(t+\omega)$, $\epsilon(t)=\epsilon(t+\omega)$, $\gamma(t)=\gamma(t+\omega)$ and $\beta(t)=\beta(t+\omega)$. We also assume that $\mu$ and $\Lambda$ are constant functions and that $\phi$ satisfies~H\ref{cond-CC1}) to H\ref{cond-CC2}).

We have in his case
\[
R_e(\omega,p)<1 \ \Leftrightarrow \ \limsup_{t \to +\infty} \int_t^{t+\omega} \beta(s) L_{\phi,\Lambda,\mu} \ds \ \Leftrightarrow \ \left[ p \bar\beta L_{\phi,\Lambda,\mu} -\mu- \bar\epsilon\right] \omega <0\]
\[
R_e^*(\omega,p)<1 \ \Leftrightarrow \ \limsup_{t \to +\infty} \int_t^{t+\omega} \epsilon(s)/p-\mu-\gamma(s) \ds<0 \ \Leftrightarrow \ \left( \bar\epsilon /p- \mu-\bar\gamma \right)\omega <0,
\] \ \\[-1mm]
\[
G(p)=\max_{t \in [0,1]} \left[ \beta(t) p L_{\phi,\Lambda,\mu} +\gamma(t) - (1+1/p)\epsilon(t)\right], \\[2mm]
\]
\[
H(p)=\min_{t \in [0,1]} \left[ \gamma(t) - (1+1/p)\epsilon(t)\right], \\[2mm]
\]
Define
	$$R^{per}= \dfrac{\bar\epsilon \, \bar\beta \, L_{\phi,\Lambda,\mu}}{(\mu+\bar\epsilon)(\mu+\bar\gamma)}$$
where $\bar f=\frac{1}{\omega} \int_0^\omega f(s) \ds$ and $L_{\phi,\Lambda,\mu}$ is given by~\eqref{def:lim}.
The following result is a consequence of Theorem~\ref{teo:Main} in this case.
\begin{corollary}\label{cor:periodic}
We have for the periodic system with constant $\mu$ and $\Lambda$.
\begin{enumerate}
\item \label{cor-per-1} If $G\left( \bar\epsilon / (\mu+\bar\gamma) \right) <0$ or $H\left( (\mu+\bar\epsilon)/(\bar\beta L_{\phi,\Lambda,\mu}) \right) >0$ and $R^{per}<1$ then the infectives go to extinction;
\item \label{cor-per-2} If $G\left((\mu+\bar\epsilon)/(\bar\beta L_{\phi,\Lambda,\mu})\right) <0$ or $H\left(\bar\epsilon / (\mu+\bar\gamma)\right) >0$ and $R^{per}>1$ then the infectives are strongly persistent.
\end{enumerate}
\end{corollary}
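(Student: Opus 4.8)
The plan is to deduce both assertions from Theorem~\ref{teo:Main} by taking $\lambda=\omega$ and choosing $p$ suitably. Since $\Lambda$ and $\mu$ are constant, $z(t)\equiv\Lambda/\mu$ is a positive solution of~\eqref{eq:SistemaAuxiliar}, and by Lemma~\ref{lemma:indep} it may be used to compute $R_e$, $R_p$ and $G$ (while $R_e^*$, $R_p^*$ and $H$ do not involve $z$). With this choice $\lim_{\delta\to0^+}b_\delta(p,t,z(t))=\beta(t)L_{\phi,\Lambda,\mu}\,p-\mu-\epsilon(t)$ and $\lim_{\delta\to0^+}g_\delta(p,t,z(t))=\beta(t)L_{\phi,\Lambda,\mu}\,p+\gamma(t)-(1+1/p)\epsilon(t)$; and since $\beta$, $\epsilon$, $\gamma$ are $\omega$-periodic, the integrals over $[t,t+\omega]$ in~\eqref{eq:relambdap}--\eqref{eq:r*plambdap} are independent of $t$, so the $\liminf$ and the $\limsup$ there coincide. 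Hence $R_e(\omega,p)=R_p(\omega,p)=\Expo\left[(\bar\beta L_{\phi,\Lambda,\mu}\,p-\mu-\bar\epsilon)\,\omega\right]$ and $R_e^*(\omega,p)=R_p^*(\omega,p)=\Expo\left[(\bar\epsilon/p-\mu-\bar\gamma)\,\omega\right]$, where $\bar f=\frac1\omega\int_0^\omega f(s)\ds$, while $G$ and $H$ reduce to the maximum and the minimum over one period of the corresponding integrands; in particular $p\mapsto G(p)$ and $p\mapsto H(p)$ are continuous (indeed nondecreasing) on $]0,+\infty[$. Consequently $R_e(\omega,p)=R_p(\omega,p)<1$ iff $p<(\mu+\bar\epsilon)/(\bar\beta L_{\phi,\Lambda,\mu})$, and $R_e^*(\omega,p)=R_p^*(\omega,p)<1$ iff $p>\bar\epsilon/(\mu+\bar\gamma)$, the reversed strict inequalities characterising the conditions ``$>1$''.

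To prove~\ref{cor-per-1}), assume $R^{per}<1$, which is precisely $\bar\epsilon/(\mu+\bar\gamma)<(\mu+\bar\epsilon)/(\bar\beta L_{\phi,\Lambda,\mu})$; then the open interval $I=\left]\bar\epsilon/(\mu+\bar\gamma),(\mu+\bar\epsilon)/(\bar\beta L_{\phi,\Lambda,\mu})\right[$ is nonempty and every $p\in I$ satisfies $R_e(\omega,p)<1$ and $R_e^*(\omega,p)<1$. If in addition $G\left(\bar\epsilon/(\mu+\bar\gamma)\right)<0$, continuity of $G$ at the left endpoint of $I$ provides some $p\in I$ with $G(p)<0$, and the first statement of Theorem~\ref{teo:Main} yields $\lim_{t\to+\infty}I(t)=0$. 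If instead $H\left((\mu+\bar\epsilon)/(\bar\beta L_{\phi,\Lambda,\mu})\right)>0$, continuity of $H$ at the right endpoint of $I$ provides some $p\in I$ with $H(p)>0$, and the second statement of Theorem~\ref{teo:Main} again gives extinction.

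To prove~\ref{cor-per-2}), assume $R^{per}>1$, i.e. $(\mu+\bar\epsilon)/(\bar\beta L_{\phi,\Lambda,\mu})<\bar\epsilon/(\mu+\bar\gamma)$; then $J=\left](\mu+\bar\epsilon)/(\bar\beta L_{\phi,\Lambda,\mu}),\bar\epsilon/(\mu+\bar\gamma)\right[$ is nonempty and every $p\in J$ satisfies $R_p(\omega,p)>1$ and $R_p^*(\omega,p)>1$. If $G\left((\mu+\bar\epsilon)/(\bar\beta L_{\phi,\Lambda,\mu})\right)<0$, continuity of $G$ at the left endpoint of $J$ gives $p\in J$ with $G(p)<0$, and the third statement of Theorem~\ref{teo:Main} yields $\liminf_{t\to+\infty}I(t)>0$. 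If $H\left(\bar\epsilon/(\mu+\bar\gamma)\right)>0$, continuity of $H$ at the right endpoint of $J$ gives $p\in J$ with $H(p)>0$, and the fourth statement of Theorem~\ref{teo:Main} gives strong persistence.

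The only delicate point is the claim, in the last two steps, that the value of $p$ produced by continuity at an endpoint of $I$ (respectively $J$) can be taken in the \emph{open} interval, so that both conditions on $R_e,R_e^*$ (respectively $R_p,R_p^*$) hold strictly and \emph{simultaneously} with the sign condition on $G$ or $H$; this is exactly where the hypothesis $R^{per}<1$ (respectively $R^{per}>1$) is used, since it makes the interval nonempty. (Implicitly, as in Corollary~\ref{cor:aut}, one needs $L_{\phi,\Lambda,\mu}>0$ for the endpoints to be meaningful.) Everything else is the routine reduction — already carried out before the statement — of $\limsup$ and $\liminf$ of averages of $\omega$-periodic functions to an average over a single period.
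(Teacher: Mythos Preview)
Your proof is correct and follows essentially the same approach as the paper: reduce the periodic averages to single-period integrals, translate $R^{per}\lessgtr 1$ into the nonemptiness of the open interval with endpoints $\bar\epsilon/(\mu+\bar\gamma)$ and $(\mu+\bar\epsilon)/(\bar\beta L_{\phi,\Lambda,\mu})$, and then use continuity of $G$ or $H$ at the appropriate endpoint to select $p$ inside the interval so that Theorem~\ref{teo:Main} applies. Your write-up is in fact more explicit than the paper's (which simply refers back to the computations in Corollary~\ref{cor:aut}), and your observation that $G$ and $H$ are nondecreasing in $p$, together with the remark about the implicit assumption $L_{\phi,\Lambda,\mu}>0$, are both correct and helpful.
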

\end{example}
\begin{proof}
By the same computations as in the proof of corollary~\ref{cor:aut} we conclude that $R_e^{per}<1$ if and only if there is
$$p \in I=\left(\dfrac{\bar\epsilon}{\mu+\bar\gamma}, \dfrac{\mu+\bar\epsilon}{\bar\beta \displaystyle L_{\phi,\Lambda,\mu}}\right)$$
such that $R_e(\lambda,p)<1$ and $R^*_e(\lambda,p)<1$ and that there is $\lambda>0$ such that $R_p^{per}(\lambda)>1$ if and only if there is $p\in I$ such that $R_p(\lambda,p)>1$ and $R^*_p(\lambda,p)>1$.

Moreover, by continuity of the functions $G$ and $H$, if
\[
  G\left(\dfrac{\bar\epsilon}{\mu+\bar\gamma}\right) <0 \quad \text{or} \quad H\left( \dfrac{\mu+\bar\epsilon}{\bar\beta L_{\phi,\Lambda,\mu}} \right) >0
\]
then there is $p \in I$ such that $G(p)<0$ or $H(p)>0$ and, by theorem~\ref{teo:Main}, we obtain~\ref{cor-per-1}..

By simmilar arguments we obtain~\ref{cor-per-2}..
\end{proof}
In~\cite{Rebelo-Margheri-Bacaër-2012}, a method to find threshold conditions in a general periodic epidemiological model relying in the spectral radius of some operator was obtained. Thought our conditions are not thresholds in the periodic case, they have the advantage that can be easily computed.

To illustrate the above corollary we consider the following family of periodic models
\begin{equation}\label{eq:ProblemaNakataKuniya}
\begin{cases}
S'=\mu-\beta(1+b\cos(2\pi t)) \, SI -\mu S+\eta R\\
E'=\beta(1+b\cos(2\pi t)) \, SI -(\mu+\epsilon(1+d\cos(2\pi t)))E\\
I'=\epsilon(1+d\cos(2\pi t)) E -(\mu+\gamma(1+k\cos(2\pi t)))I \\
R'=\gamma(1+k\cos(2\pi t)) I-(\mu +\eta)R \\
N=S+E+I+R
\end{cases}
\end{equation}
where $|b|<1$. In~\cite{Nakata-Kuniya-JMAA-2010} it was showed that for $\mu=2$, $\epsilon=1$, $\gamma=0.02$, $\eta=0.1$, $\beta=6.2$ and $b=0.6$ and $d=k=0$ the number $R^{per}$ is not a threshold. Our result is not applicable in this case since in this case $G\left(\epsilon / (\mu+\gamma) \right)= G(0.49505)=1.91089>0$. More generally it is easy to check that, for the system~\eqref{eq:ProblemaNakataKuniya}, letting $\beta$ and $b$ vary and $\mu=2$, $\epsilon=1$, $\gamma=0.02$, $\eta=0.1$ and $d=k=0$, we have that $R^{per}<1$ (respectively $R^{per}>1$) is equivalent to $\beta < 6.06$ (respectively $\beta > 6.06$),  $G(\epsilon/(\mu+\gamma))<0$ is equivalent to $\beta (1+|b|) <6.06$, $G\left( (\mu+\epsilon)/(\beta L_{\phi,\Lambda,\mu}) \right) <0$ is equivalent to $\beta>9|b|+6.06$ and $H\left( \epsilon/(\mu+\gamma)\right)>0$ and $H\left( (\mu+\epsilon)/(\beta L_{\phi,\Lambda,\mu}) \right)>0$ are impossible. In the first plot in figure~\ref{fig1} we plot the region of parameters $(b,\beta)$ where corollary~\ref{cor:periodic} is applicable and where we have extinction (purple) and permanence (blue).

 Using the parameters in~\cite{Nakata-Kuniya-JMAA-2010} but letting $\gamma$ and $k$ vary, we consider $\mu=2$, $\eta=0.1$, $\epsilon=1$, $\beta=6.06$ and $b=d=0$, we conclude that $G(\epsilon/(\mu+\gamma))<0$ is equivalent to $(2+\gamma) \left(3-\gamma|k|\right) > 6.06$, $G\left( (\mu+\epsilon)/(\beta L_{\phi,\Lambda,\mu}) \right) <0$ is equivalent to $\gamma(1+|k|)<0.02$, $H\left( \epsilon/(\mu+\gamma)\right)>0$ is impossible and $H\left( (\mu+\epsilon)/(\beta L_{\phi,\Lambda,\mu}) \right)>0$ is equivalent to $\gamma(1-|k|)>3.02$. Additionally $R^{per}<1$ is equivalent to $\gamma>0.02$ and $R^{per}>1$ is equivalent to $\gamma<0.02$. In the second plot in figure~\ref{fig1} we plot the region of parameters $(k,\gamma)$ where corollary~\ref{cor:periodic} is applicable and where we have extinction (purple) and permanence (blue).

 Finally, letting $\epsilon$ and $d$ vary and setting $\mu=2$, $\gamma=0.02$, $\eta=0.1$, $\beta=6.06$ and $b=k=0$, we conclude that $R^{per}<1$ is equivalent to $\epsilon<1$, $R^{per}>1$ is equivalent to $\epsilon>1$, $G(\epsilon/(\mu+\gamma))<0$ is equivalent to $2(\epsilon-1)+(2.02+\epsilon)|d|<0$, $G\left( (\mu+\epsilon)/(\beta L_{\phi,\Lambda,\mu}) \right) <0$ is equivalent to $|d|<1-(2.02+\epsilon)(2+\epsilon)/(\epsilon(8.06+\epsilon))$, $H\left( \epsilon/(\mu+\gamma)\right)>0$ is equivalent to $2.01 \epsilon (1+|b|)<0.02$ and $H\left( (\mu+\epsilon)/(\beta L_{\phi,\Lambda,\mu}) \right)>0$ is equivalent to $0.02(2+\epsilon)-(8.06+\epsilon)\epsilon(1+|b|)>0$. In the third plot in figure~\ref{fig1} we plot the region of parameters $(d,\epsilon)$ where corollary~\ref{cor:periodic} is applicable and where we have extinction (purple) and permanence (blue).

\begin{figure}[h!]
  \begin{minipage}[b][3.8cm]{.31\linewidth}
    \includegraphics[scale=0.38]{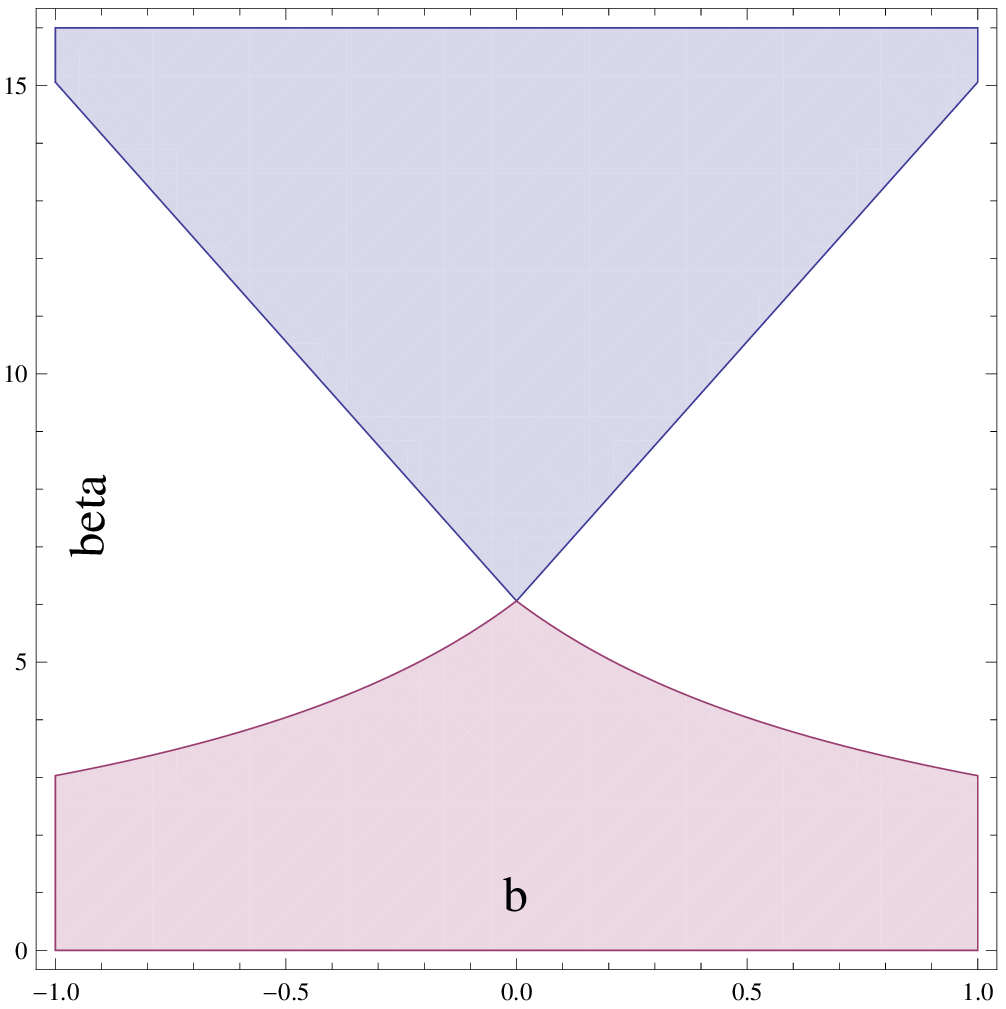}
  \end{minipage}
  \begin{minipage}[b][3.8cm]{.31\linewidth}
        \includegraphics[scale=0.38]{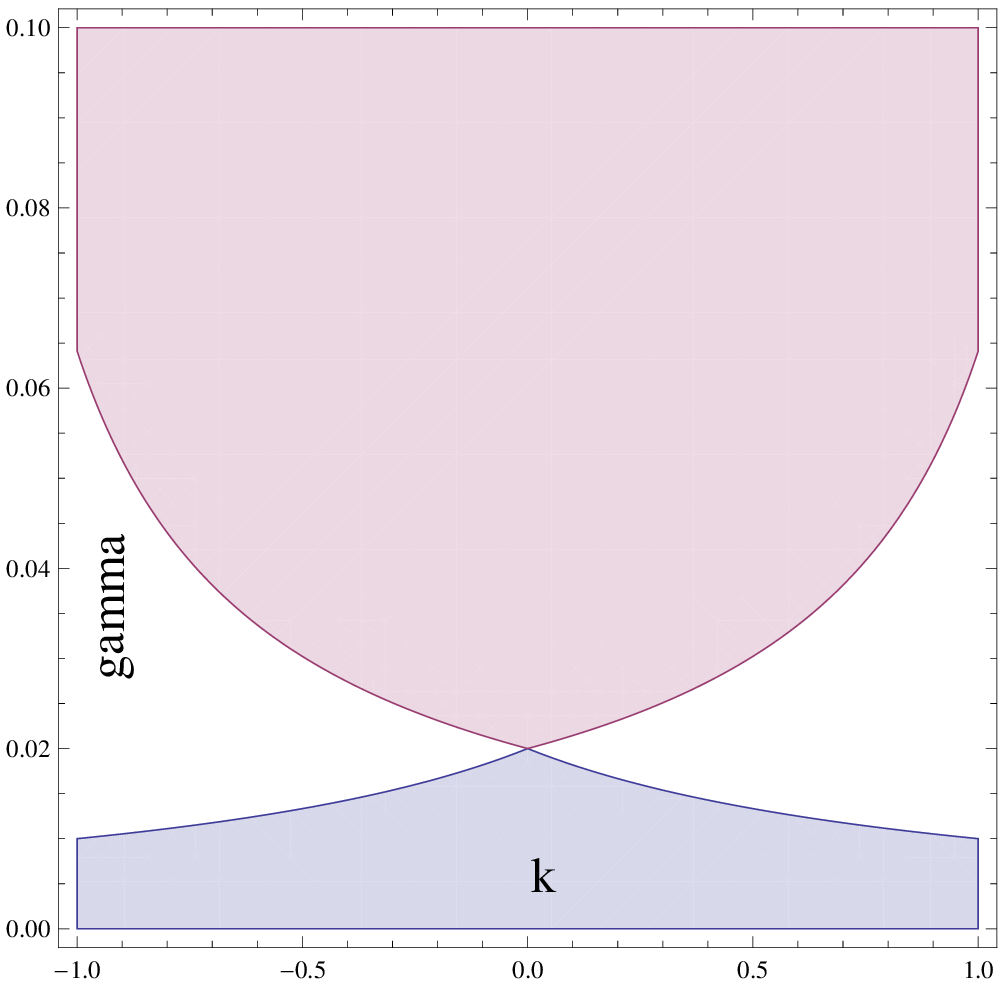}
  \end{minipage}
  \begin{minipage}[b][3.8cm]{.31\linewidth}
        \includegraphics[scale=0.38]{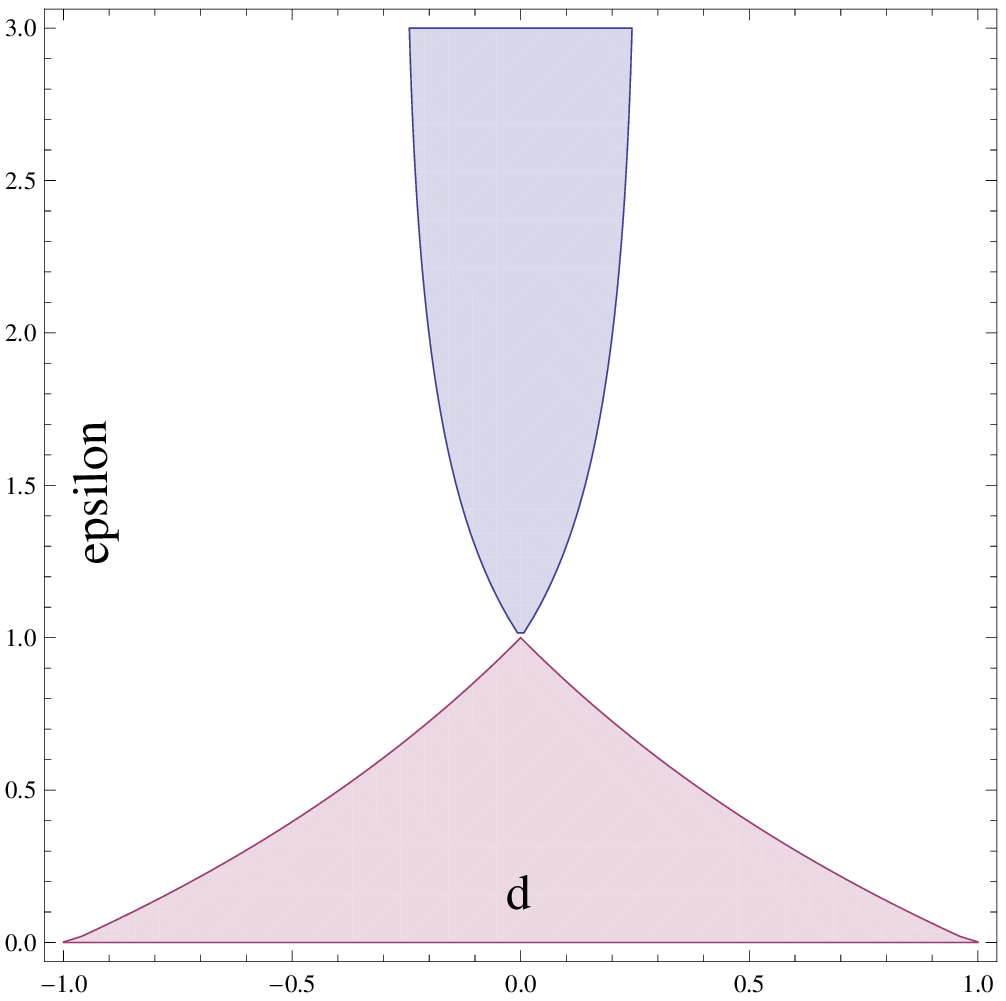}
  \end{minipage}
    \caption{first: $\mu=2$, $\epsilon=1$, $\gamma=0.02$, $\eta=0.1$ and $d=k=0$; second: $\mu=2$, $\epsilon=1$, $\eta=0.1$, $\beta=6.2$ and $b=d=0$; third: $\mu=2$, $\gamma=0.02$, $\eta=0.1$, $\beta=6.2$ and $b=k=0$}
      \label{fig1}
\end{figure}

\begin{example}[Michaelis-Menten contact rates]
We consider the particular form for the incidence $\phi(S,N,I)=C(N)\frac{SI}{N}$. These rates are called Michaelis-Menten contact rates were considered for instance in~\cite{Zhang-Yingqi-Xu-AM-2013} and have as particular cases the standard incidence ($C(N)=1$) and the simple incidence ($C(N)=N$). We will assume that $\Lambda$ and $\mu$ are constant, that
\begin{equation}\label{eq:cond-MM1}
n \mapsto C(n)/n \quad \text{is non increasing}
\end{equation}
and that, for each $\theta>0$,
\begin{equation}\label{eq:cond-MM2}
\|C(n_1)-C(n_2)\| \le  K_\theta \|n_1-n_2\|.
\end{equation}

We have
\[
\begin{split}
  R_e(\lambda,p)<1 \quad
  & \Leftrightarrow \ \ \limsup_{t \to +\infty} \int_t^{t+\lambda} \beta(s)C(\Lambda/\mu)p-\mu-\epsilon(s) \ds<0 \\
  & \Leftarrow \ \ p C(\Lambda/\mu) \limsup_{t \to +\infty} \int_t^{t+\lambda} \beta(s) \ds - (\mu+\epsilon_\lambda^-)\lambda <0 \\
  & \Leftrightarrow \ \ \left[ p C(\Lambda/\mu) \beta_\lambda^+ -\mu-\epsilon_\lambda^- \right]\lambda <0
\end{split}
\]
\[
\begin{split}
  R_e^*(\lambda,p)<1 \quad
  & \Leftrightarrow \quad \limsup_{t \to +\infty} \int_t^{t+\lambda} \epsilon(s)/p-\mu-\gamma(s) \ds<0 \\
  & \Leftarrow \quad \left( \epsilon_\lambda^+ /p- \mu-\gamma_\lambda^- \right) \lambda <0, \\
\end{split}
\]
and analogously
$$R_p(\lambda,p)>1 \quad \Leftarrow \quad \left[ p C(\Lambda/\mu) \beta_\lambda^- -\mu-\epsilon_\lambda^+ \right]\lambda >0$$
and
$$R_p^*(\lambda,p)>1 \quad \Leftarrow \quad \left( \epsilon_\lambda^-  /p- \mu-\gamma_\lambda^+ \right) \lambda >0.$$
Define
$$R_e^{M}(\lambda)=\frac{\epsilon^+_\lambda \beta^+_\lambda  C(\Lambda/\mu)}{(\mu+\epsilon^-_\lambda)(\mu+\gamma^-_\lambda)} \quad \text{and} \quad R_p^{M}(\lambda)=\frac{\epsilon^-_\lambda \beta^-_\lambda  C(\Lambda/\mu)}{(\mu+\epsilon^+_\lambda)(\mu+\gamma^+_\lambda)}.$$
\begin{corollary}
We have the following for the Michaelis-Menten contact-rates with constant $\Lambda$ and $\mu$ and satisfying~\eqref{eq:cond-MM1} and~\eqref{eq:cond-MM2}.
\begin{enumerate}
\item If $G(\epsilon_\lambda^+/(\mu+\gamma_\lambda^-))<0$ or $H((\mu+\epsilon_\lambda^-)/(C(\Lambda/\mu)\beta_\lambda^+))>0$ and $R_e^{M}(\lambda)<1$ for some $\lambda>0$ then the infectives go to extinction;
\item If $G((\mu+\epsilon_\lambda^-)/(C(\Lambda/\mu)\beta_\lambda^+))<0$ or $H(\epsilon_\lambda^+/(\mu+\gamma_\lambda^-))>0$ and $R_p^{M}(\lambda)>1$ for some $\lambda>0$ then the infectives are strongly persistent.
\end{enumerate}
\end{corollary}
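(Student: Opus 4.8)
The plan is to follow the scheme of the proofs of Corollaries~\ref{cor:aut} and~\ref{cor:periodic}. First I would verify that $\phi(S,N,I)=C(N)\,SI/N$ meets the standing hypotheses H1)--H4): the monotonicity in $N$ is exactly~\eqref{eq:cond-MM1}, the remaining monotonicity requirements and $\phi(0,n,z)=0$ are immediate, H2) and H3) are automatic because $\phi(x,n,z)/z=C(n)\,x/n$ does not depend on $z$ (so that $\lim_{\delta\to0^+}\phi(x,n,\delta)/\delta=C(n)\,x/n$), and H4) follows from~\eqref{eq:cond-MM2} together with the boundedness of $C$ on $[0,K]$. Since $\Lambda$ and $\mu$ are constant, $z(t)\equiv\Lambda/\mu>0$ is a solution of~\eqref{eq:SistemaAuxiliar}, so~\eqref{def:lim} gives $L_{\phi,\Lambda,\mu}=C(\Lambda/\mu)$, and by Lemma~\ref{lemma:indep} this solution may be used throughout. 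With this choice the functions $b_\delta(p,t,z(t))$ and $g_\delta(p,t,z(t))$ of~\eqref{eq:bSlambda} and~\eqref{eq:glambda} no longer depend on $\delta$, being equal to $\beta(t)C(\Lambda/\mu)\,p-\mu-\epsilon(t)$ and $\beta(t)C(\Lambda/\mu)\,p+\gamma(t)-(1+1/p)\epsilon(t)$ respectively.

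Next I would record, from the estimates displayed just before the statement (via $\limsup(f+g)\le\limsup f+\limsup g$, $\limsup(-g)=-\liminf g$, and the dual inequalities for $\liminf$), the implications: $[\,pC(\Lambda/\mu)\beta_\lambda^+-\mu-\epsilon_\lambda^-\,]\lambda<0$ forces $R_e(\lambda,p)<1$; $(\epsilon_\lambda^+/p-\mu-\gamma_\lambda^-)\lambda<0$ forces $R_e^*(\lambda,p)<1$; $[\,pC(\Lambda/\mu)\beta_\lambda^--\mu-\epsilon_\lambda^+\,]\lambda>0$ forces $R_p(\lambda,p)>1$; and $(\epsilon_\lambda^-/p-\mu-\gamma_\lambda^+)\lambda>0$ forces $R_p^*(\lambda,p)>1$. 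Arguing exactly as in the proof of Corollary~\ref{cor:aut}, $R_e^M(\lambda)<1$ is then equivalent to the open interval of $p$ with $\epsilon_\lambda^+/(\mu+\gamma_\lambda^-)<p<(\mu+\epsilon_\lambda^-)/(C(\Lambda/\mu)\beta_\lambda^+)$ being nonempty, and every such $p$ satisfies $R_e(\lambda,p)<1$ and $R_e^*(\lambda,p)<1$; symmetrically, $R_p^M(\lambda)>1$ is equivalent to the nonemptiness of the corresponding $p$-interval, on which $R_p(\lambda,p)>1$ and $R_p^*(\lambda,p)>1$.

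Finally I would upgrade such a $p$ to one that moreover makes $G(p)<0$ or $H(p)>0$. As in the proof of Corollary~\ref{cor:periodic} this rests on the continuity of $G$ and $H$, together with their monotonicity in $p$: by~\eqref{eq:glambda} and the definition of $h$, each of $g_\delta(p,t,z(t))$ and $h(p,t)$ is increasing in $p$, and $\limsup$ and $\liminf$ preserve monotonicity, so $G$ and $H$ are nondecreasing in $p$; hence the assumed sign condition on $G$ (resp.\ on $H$) at the prescribed endpoint carries over into the $p$-interval found above. Theorem~\ref{teo:Main} then yields both conclusions. I expect the only genuine difficulty to be the bookkeeping in the middle step --- pairing each of $\beta_\lambda^\pm$, $\epsilon_\lambda^\pm$, $\gamma_\lambda^\pm$ with the correct one of $R_e$, $R_e^*$, $R_p$, $R_p^*$, $G$ and $H$, since the contact rate $\beta$ and the transfer rates $\epsilon$, $\gamma$ enter the exponents with opposite monotonicity --- together with the routine verification of H1)--H4).
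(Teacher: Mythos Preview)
Your approach is essentially the paper's: identify the $p$-interval $I=\bigl(\epsilon_\lambda^+/(\mu+\gamma_\lambda^-),\,(\mu+\epsilon_\lambda^-)/(C(\Lambda/\mu)\beta_\lambda^+)\bigr)$ from $R_e^M(\lambda)<1$, push the sign of $G$ or $H$ from the stated boundary point into $I$, and invoke Theorem~\ref{teo:Main}; the persistence half is the symmetric argument.

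The one substantive difference is in how you handle $G$. The paper argues that ``$pG(p)$ has two zeros $a_0\in\R^-$ and $a_1\in\R^+$ with positive leading coefficient,'' treating $pG(p)$ as a quadratic in $p$ and concluding $\{p>0:G(p)<0\}=(0,a_1)$. This is literally correct only in the autonomous case; here $G(p)=\limsup_{t}\bigl[\beta(t)C(\Lambda/\mu)p+\gamma(t)-(1+1/p)\epsilon(t)\bigr]$, and a $\limsup$ of quadratics need not be quadratic. Your replacement --- observing that $\partial_p g_\delta=\beta(t)C(\Lambda/\mu)+\epsilon(t)/p^2\ge 0$, so $G$ is nondecreasing --- gives the same conclusion (the sublevel set $\{G<0\}$ is an interval of the form $(0,a_1)$) without that issue, and is the cleaner route.

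One small expository point: your ``hence'' after the monotonicity sentence overstates what monotonicity buys. In part~1 the hypothesis is $G<0$ at the \emph{left} endpoint of $I$ (and $H>0$ at the \emph{right} endpoint), so monotonicity of $G,H$ points the wrong way; it is the continuity of $G$ and $H$ (which follows from the uniform-in-$t$ Lipschitz bound $|g_\delta(p_1,t)-g_\delta(p_2,t)|\le \beta_S C(\Lambda/\mu)|p_1-p_2|+\epsilon_S|1/p_1-1/p_2|$, and similarly for $h$) that transports the sign just inside $I$. You do invoke continuity, so the argument is sound --- just attribute the step to it rather than to monotonicity.
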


\begin{proof}
We begin by noting that there is $p>0$ such that $G(p)<0$ if and only if there is $p>0$ such that $pG(p)<0$. Since $pG(p)$ has two zeros, $a_0 \in \R^-$ and $a_1 \in \R^+$, and the coefficient of $p^2$ is positive, we conclude that there is $p>0$ such that $G(p)<0$ if and only if there is $p \in ]0,a_1[$ such that $G(p)<0$.

By the simmilar computations to the ones in the proof of corollary~\ref{cor:aut} we conclude that if there is $\lambda>0$ such that $R_e^{M}(\lambda)<1$ then there is
$$p \in I=\left(\dfrac{\epsilon_\lambda^+}{\mu+\gamma_\lambda^-},\dfrac{\mu+\epsilon_\lambda^-}{C(\Lambda/\mu)\beta_\lambda^+}\right)$$
such that $R_e(\lambda,p)<1$ and $R^*_e(\lambda,p)<1$. Thus, if $G(\epsilon_\lambda^+/(\mu+\gamma_\lambda^-))<0$ there is $p>0$ such that $]0,a_1[ \, \cap I \ne \emptyset$. Therefore if $G(\epsilon_\lambda^+/(\mu+\gamma_\lambda^-))<0$ there is there is $p>0$ such that $R_e(\lambda,p)<1$, $R_e^*(\lambda,p)<1$ and $G(p)<0$. Thus, by~Theorem~\ref{teo:Main}, the infectives go to extinction. On the other hand, since $H$ is continuous, if $H((\mu+\epsilon_\lambda^-)/(C(\Lambda/\mu)\beta_\lambda^+))>0$ there is $p \in I$ such that $R_e(\lambda,p)<1$, $R_e^*(\lambda,p)<1$ and $H(p)>0$. Therefore if $H((\mu+\epsilon_\lambda^-)/(C(\Lambda/\mu)\beta_\lambda^+))>0$ there is there is $p>0$ such that $R_e(\lambda,p)<1$, $R_e^*(\lambda,p)<1$ and $H(p)>0$. Thus, by~Theorem~\ref{teo:Main}, the infectives go to extinction and we obtain~\ref{cor-aut-1}..

By the simmilar computations we get~\ref{cor-aut-2}..
\end{proof}

In particular, setting $C(N)=N$ (mass-action incidence) we get
$$R_e^M(\lambda)=\frac{\epsilon^+_\lambda \beta^+_\lambda \Lambda/\mu}{(\mu+\epsilon^-_\lambda)(\mu+\gamma^-_\lambda)} \quad \text{and} \quad
R_p^M(\lambda)=\frac{\epsilon^-_\lambda \beta^-_\lambda \Lambda/\mu}{(\mu+\epsilon^+_\lambda)(\mu+\gamma^+_\lambda)}.$$
and setting $C(N)=1$ (standard incidence) we obtain
$$R_e^M(\lambda)=\frac{\epsilon^+_\lambda \beta^+_\lambda}{(\mu+\epsilon^-_\lambda)(\mu+\gamma^-_\lambda)} \quad \text{and} \quad
R_p^M(\lambda)=\frac{\epsilon^-_\lambda \beta^-_\lambda}{(\mu+\epsilon^+_\lambda)(\mu+\gamma^+_\lambda)}.$$
\end{example}

To illustrate the above corollary we consider the following family of nonperiodic models
\[
\begin{cases}
S'=\mu-\beta(1+b(1+\e^{-t})\cos(2\pi t)) \, SI -\mu S+\eta R\\
E'=\beta(1+b(1+\e^{-t})\cos(2\pi t)) \, SI -(\mu+\epsilon)E\\
I'=\epsilon E -(\mu+\gamma)I \\
R'=\gamma I-(\mu +\eta)R \\
N=S+E+I+R
\end{cases}
\]
It is easy to see that, in this case, $\beta^+_1=\beta^-_1=\beta$ and thus
$$R_e^M(\lambda)=R_p^M(\lambda)=\frac{\epsilon \beta}{(\mu+\epsilon)(\mu+\gamma)}$$
The following figures show situations where we have strong persistence and extinction for the above model with different values for $\beta$ and $b$ and $\mu=2$, $\epsilon=1$, $\gamma=0.02$ and $\eta=0.1$. For instance, for $\beta=10$ and $b=0.3$ we can see that $R_p^M(1)=1.65>1$ and $G(3/10)=-0.41<0$ and we conclude that we have strong persistence and for $\beta=5$ and $b=0.2$ we can see that $R_e^M(1)=0.82<1$ and $G(0.495)=-0.03<0$ and we conclude that we have extinction (see figure~\ref{fig2}).
\begin{figure}[h!]
  \begin{minipage}[b][3.8cm]{.49\linewidth}
    \includegraphics[scale=0.6]{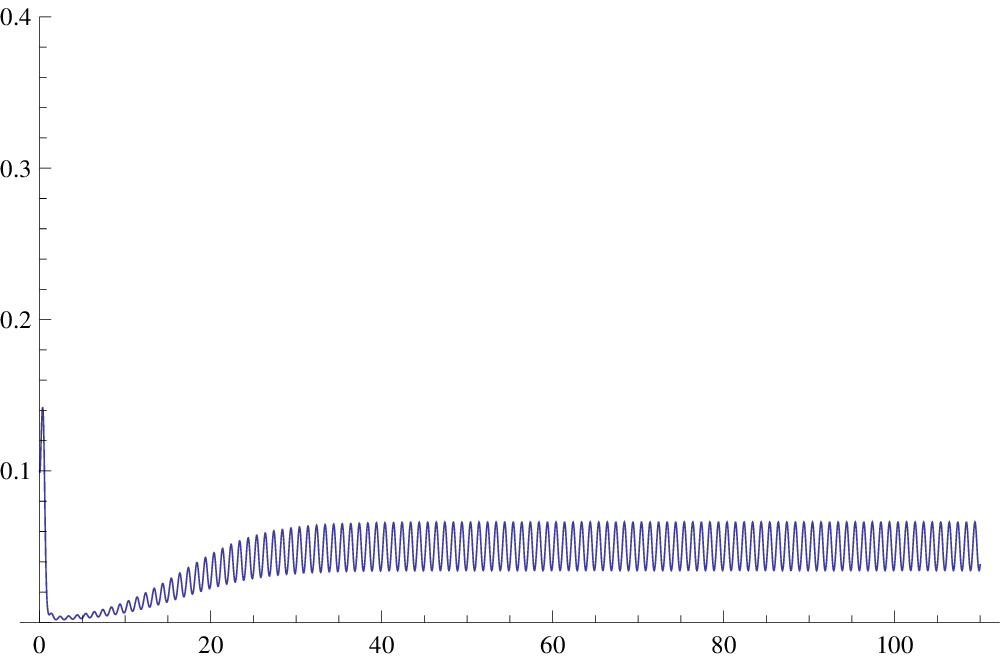}
  \end{minipage}
  \begin{minipage}[b][3.8cm]{.45\linewidth}
        \includegraphics[scale=0.6]{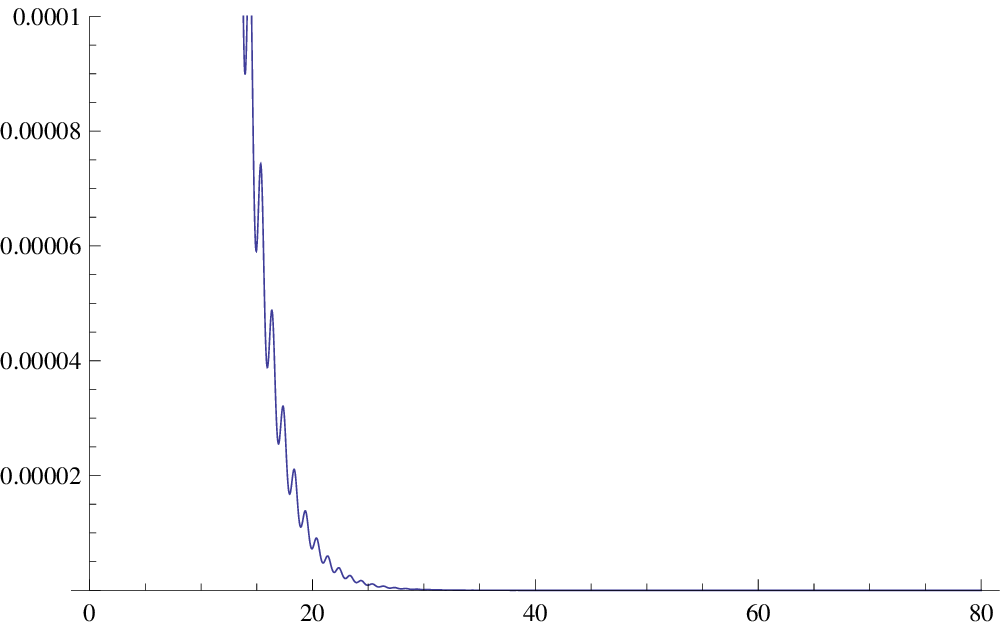}
  \end{minipage}
    \caption{left: $\beta=10$ and $b=0.3$; right: $\beta=5$ and $b=0.2$.}
      \label{fig2}
\end{figure}

\section{Proofs} \label{section:P}

\subsection{Proof of Lemma~\ref{lemma:indep}}
Assume that $p>0$, $\eps>0$ and $0\le\theta\le K$, $a,b \in ]\theta,K[$ and $a-b \le \eps$. We have, by~H\ref{cond-CC2}),
$$|\phi(a,a,\delta) - \phi(b,b,\delta)| \le K_\theta |a-b| \delta.$$
Therefore, if $a>b$ we have by~H\ref{cond-CC1})
\begin{equation}\label{eq:lem1-desig1}
\beta(t)\frac{\phi(a,a,\delta)}{\delta} - \beta(t)\frac{\phi(b,b,\delta)}{\delta} \le \beta(t)K_\theta |a-b|  = \beta(t)K_\theta (a-b) \le \beta_S K_\theta \eps
\end{equation}
and if $a<b$, again by~H\ref{cond-CC1}),
\begin{equation}\label{eq:lem1-desig2}
  \beta(t)\frac{\phi(a,a,\delta)}{\delta} - \beta(t)\frac{\phi(b,b,\delta)}{\delta} \le 0 \le \beta_S K_\theta \eps.
\end{equation}
By~\eqref{eq:lem1-desig1} and~\eqref{eq:lem1-desig2} we have
$$b_\delta(p,t,a)-b_\delta(p,t,b) \le \beta_S K_\theta p \eps$$
and we obtain~\eqref{eq:bdela}.

On the other side, again by~H\ref{cond-CC2}), assuming that $p>0$, $\eps>0$, $0\le\delta\le K$, $a,b \in ]\theta,K[$ and $|a-b| \le \eps$ we get
\[
\beta(t)\dfrac{\phi(a,a,\delta)}{\delta} - \beta_S K_\theta \eps \le \beta(t)\dfrac{\phi(b,b,\delta)}{\delta} \le \beta(t)\dfrac{\phi(a,a,\delta)}{\delta} + \beta_S K_\theta \eps,
\]
and thus
\begin{equation} \label{eq:supinftau}
b_\delta(p,t,a) - \beta_S K_\theta p \eps \le b_\delta(p,t,b) \le b_\delta(p,t,a) + \beta_S K_\theta p \eps.
\end{equation}

We will now show that $R_e(\lambda,p)$ and $R_e(\lambda,p)$ are independent of the particular solution $z(t)$ of~\eqref{eq:SistemaAuxiliar} with $z(0)>0$. In fact, letting $z_1$ be some solution of~\eqref{eq:SistemaAuxiliar} with $z_1(0)>0$, by~\ref{cond-4-aux}) in Proposition~\ref{eq:subsectionAS}, we can choose $\theta_1 > 0$ such that $z(t),z_1(t) \ge \theta_1$ for all $t \ge T$. On the other hand, by~\ref{cond-2-aux}) in Proposition~\ref{eq:subsectionAS}, given $\eps>0$ there is a $T_\eps >0$ such that $|z(t)- z_1(t)| < \eps$ for every $t \ge T_\eps$. Letting $a=z(t)$ and $b=z_1(t)$ and computing the integral from $t$ to $t+\lambda$ in~\eqref{eq:supinftau} we get
\[
\left| \int_t^{t+\lambda} \lim_{\delta \to 0^+} b_\delta(p,s,z_1(s)) \ds - \int_t^{t+\lambda} \lim_{\delta \to 0^+} b_\delta(p,s,z(s)) \ds \right| \le \lambda \beta_S K_{\theta_1} p \, \eps,
\]
for every $t \ge T_\eps$. We conclude that, for every $\eps>0$,
\[
\begin{split}
& \limsup_{t \to +\infty} \int_t^{t+\lambda} \lim_{\delta \to 0^+} b_\delta(p,s,z_1(s)) \ds - \lambda \beta_S K_{\theta_1} p \, \eps \\
& \le \limsup_{t \to +\infty} \int_t^{t+\lambda} \lim_{\delta \to 0^+} b_\delta(p,s,z(s)) \ds \\
& \le \limsup_{t \to +\infty} \int_t^{t+\lambda} \lim_{\delta \to 0^+} b_\delta(p,s,z_1(s)) \ds + \lambda \beta_S K_{\theta_1} p \, \eps,
\end{split}
\]
and thus $R_e(\lambda,p)$ is independent of the chosen solution. Taking $\liminf$ instead of $\limsup$, the same reasoning shows that $R_p(\lambda,p)$ is also independent of the particular solution. Similar computations imply that $G(p)$ is also independent of the particular chosen solution. This proves the lemma.
\subsection{Proof of Lemma~\ref{lemma:tech}.}

Lets assume first that $G(p)<0$ and let $(S(t),E(t),I(t),R(t))$ be some solution of~\eqref{eq:ProblemaPrincipal} with $S(T_0), E(T_0), I(T_0), R(T_0)>0$ for some $T_0>0$. Then there is $T_1>0$ such that $\displaystyle g_\delta(p,t,N(t))>0$ for all $t \ge T_1$ (note that $N(t)$ is a solution of~\eqref{eq:SistemaAuxiliar}). By contradiction, assume also that there is no $T_2 \ge T_1$ such that $W(p,t) \le 0$ or $W(p,t) > 0$ for all $t \ge T_2$. Therefore there is $s \ge T_1$ such that
$$W(p,s)=0 \quad \Leftrightarrow \quad pE(s)=I(s)$$
and
$$\frac{dW}{dt}(p,s) >0.$$

Since $s \ge T_1$ we have $\displaystyle \lim_{\delta \to 0^+} g_\delta(p,s,N(s))<0$. By~H\ref{cond-CC1}),~H\ref{cond-CC3}) and~\eqref{eq:glambda} we obtain
{\small
\[
\begin{split}
0
& < \frac{dW}{dt}(p,s) \\
& = \frac{d}{dt} [pE(t)-I(t)]|_{t=s} \\
& = p E'(s) - I'(s) \\
& = p \left[\beta(s) \, \phi(S(s),N(s),I(s)) - (\mu(s)+\epsilon(s))E(s) \right]- \epsilon(s)E(s)+(\mu(s)+\gamma(s))I(s)\\
& = \left[p\beta(s) \, \frac{\phi(S(s),N(s),I(s))}{I(s)} + \mu(s)+\gamma(s) \right]I(s) - [p (\mu(s)+\epsilon(s)) + \epsilon(s)] E(s) \\
& \le \left[p\beta(s)\lim_{\delta \to 0^+} \frac{\phi(S(s),N(s),\delta)}{\delta} + \mu(s)+\gamma(s) \right]I(s) - \left[\mu(s)+\epsilon(s)+\frac{\epsilon(s)}{p}\right] pE(s) \\
& =\left[p\beta(s)\lim_{\delta \to 0^+} \frac{\phi(S(s),N(s),\delta)}{\delta} + \gamma(s) - \epsilon(s)\left( 1 + \frac{1}{p}\right)\right]I(s) \\
& \le\left[p\beta(s)\lim_{\delta \to 0^+} \frac{\phi(N(s),N(s),\delta)}{\delta} + \gamma(s) - \epsilon(s)\left( 1 + \frac{1}{p}\right)\right]I(s) \\
& = \lim_{\delta \to 0^+} g_\delta(p,s,N(s)) I(s) \le 0
\end{split}
\]
}
witch contradicts the assumption. Thus there is $T_2 \ge T_1$ such that $W(p,t) \le 0$ or $W(p,t) > 0$ for all $t \ge T_2$.

Assume now that $H(p)\ge 0$ and let $(S(t),E(t),I(t),R(t))$ be some solution of~\eqref{eq:ProblemaPrincipal} with $S(T_0), E(T_0), I(T_0), R(T_0)>0$ for some $T_0>0$. Then there is $T_3>0$ such that $ h(p,t)>0$ for all $t \ge T_3$. By contradiction, assume also that there is no $T_4 \ge T_3$ such that $W(p,t) \le 0$ or $W(p,t) > 0$ for all $t \ge T_4$. Therefore there is $s \ge T_3$ such that
$$W(p,s)=0 \quad \Leftrightarrow \quad pE(s)=I(s)$$
and
$$\frac{dW}{dt}(p,s) <0.$$
Since $s \ge T_3$ we have $\displaystyle h(p,s)>0$. By~H\ref{cond-CC1}),~H\ref{cond-CC3}) and~\eqref{eq:glambda} we obtain
{\small
\[
\begin{split}
0
& > \frac{dW}{dt}(p,s) \\
& = \frac{d}{dt} [pE(t)-I(t)]|_{t=s} \\
& = p E'(s) - I'(s) \\
& = p \left[\beta(s) \, \phi(S(s),N(s),I(s)) - (\mu(s)+\epsilon(s))E(s) \right]- \epsilon(s)E(s)+(\mu(s)+\gamma(s))I(s)\\
& \ge \left[ \mu(s)+\gamma(s) \right]I(s) - \left[\mu(s)+\epsilon(s)+\frac{\epsilon(s)}{p}\right] pE(s) \\
& =\left[\gamma(s) - \epsilon(s)\left( 1 + \frac{1}{p}\right)\right]I(s) \\
& =  h(p,s) I(s) \ge 0
\end{split}
\]
}
witch is a contradiction. Thus there is $T_4 \ge T_3$ such that $W(p,t) \le 0$ or $W(p,t) > 0$ for all $t \ge T_4$.
Assuming that $G(p)<0$ or $H(p)>0$, $R_p(\lambda,p)>1$ and $R_p^*(\lambda,p)>1$ for some $p,\lambda>0$, by the previous arguments, we have $W(p,t)>0$ for all $t \ge T_2$ or $W(p,t) \le 0$ for all $t \ge T_2$. Suppose by contradiction that $W(p,t)>0$ for all $t \ge T_2$. We have $E(t)>I(t)/p$ for all $t \ge T_2$. Then, by the third equations in~\eqref{eq:ProblemaPrincipal} we have
$$\frac{d}{dt}I(t) > \epsilon(t) \frac{1}{p} I(t) - (\mu(t)+\gamma(t)) I(t) = [\epsilon(t) \frac{1}{p} - \mu(t)-\gamma(t) ] I(t)$$
and thus, for all $t \ge T_2$, we have
$$I(t)>I(T_2) \e^{\int_{T_2}^t \epsilon(r) \frac{1}{p} - \mu(r)-\gamma(r) \dr}.$$
Since $R^*_p(\lambda,p)>1$, by~\eqref{eq:r*plambdap} we conclude that there is $\eta>0$ and $T>0$ such that, for all $t \ge T$, we have
$$\int_t^{t+\lambda} \epsilon(r) \frac{1}{p} - \mu(r)-\gamma(r) \dr > \eta.$$
Thus, for all $t >  \max\{T_2,T\}$, we obtain $I(t)> I(T_2) \e^{\left(\frac{t-T_2}{\lambda} -1 \right)\eta}$. Thus $I(t) \to +\infty$ and this contradicts the fact that $I(t)$ must be bounded. Then we must have $W(p,t)\le 0$ and lemma is proved.

\subsection{Proof of Theorem~\ref{teo:Main}.}
Assume that there are constants $\lambda>0$ and $p >0$ such that
$R_e(\lambda,p)<1$, $R^*_e(\lambda,p)<1$ and $G(p)<0$ and let $(S(t),E(t),I(t),R(t))$ be some solution of~\eqref{eq:ProblemaPrincipal} with $S(T_0), E(T_0), I(T_0), R(T_0)>0$ for some $T_0>0$. By contradiction, assume that $\displaystyle \liminf_{t\to +\infty} I(t)>0$ and thus that there are $T\ge T_0$ and $\eps_0>0$ and such that $I(t)>\eps_0$ for all $t>T$.

Since $R_e(\lambda,p)<1$, by~\eqref{eq:relambdap} we conclude that there is $T_1 \ge T$ such that
\[
 \int_t^{t+\lambda} \lim_{\delta \to 0^+} b_{\delta}(p,s,N(s)) \ds < -\eta < 0,
\]
for all $t \ge T_1$.

By \ref{cond-3-bs}) in Proposition~\ref{subsection:BS}, we may assume that $(S(t),N(t),I(t)) \in \Delta_{0,k}$ for $t \ge T_1$.

By Lemma~\ref{lemma:tech} we have $W(p,t) > 0$ for all $t \ge T_1$ or
$W(p,t) \le 0$ for all $t \ge T_1$. Assume first that $W(p,t) > 0$ for all $t \ge T_1$.
Since $I(T_0)>0$, by~\ref{cond-2-bs}) in Proposition~\ref{subsection:BS} we have that $I(t) >0$ for all $t \ge T_0$ and, by the second equation in~\eqref{eq:ProblemaPrincipal},~H\ref{cond-CC1}),~H\ref{cond-CC3}) and~\eqref{eq:bSlambda}, there is $T_2 \ge T_1$ such that
\begin{equation}\label{eq:integrar1}
\begin{split}
E'(t)
& = \beta(t) \phi(S(t),N(t),I(t))-(\mu(t)+\epsilon(t))E(t) \\
& = \beta(t) \frac{\phi(S(t),N(t),I(t))}{I(t)} I(t)-(\mu(t)+\epsilon(t))E(t) \\
& < \beta(t) \frac{\phi(N(t),N(t),I(t))}{I(t)} pE(t)-(\mu(t)+\epsilon(t))E(t) \\
& \le \beta(t) \lim_{\delta \to 0^+} \frac{\phi(N(t),N(t),\delta)}{\delta} pE(t)-(\mu(t)+\epsilon(t))E(t) \\
& = \lim_{\delta \to 0^+} b_\delta(p,t,N(t))E(t)
\end{split}
\end{equation}
for all $t \ge T_2$ and $0 < \delta \le \delta_1$.
Thus, integrating~\eqref{eq:integrar1} we obtain
\[
\begin{split}
E(t)
& \le E(T_2) \Expo\left[\int_{T_2}^t \displaystyle \lim_{\delta \to 0^+} b_\delta(p,s,N(s)) \ds\right]  \\
& = E(T_2) \Expo\left[\int_{T_2}^{T_2+\lambda \lfloor \frac{t-T_2}{\lambda} \rfloor} \displaystyle \lim_{\delta \to 0^+}b_\delta(p,s,N(s)) \ds \, + \right. \\
& \hspace{4cm} \left.+\int_{T_2+\lambda \lfloor \frac{t-T_2}{\lambda} \rfloor}^t \displaystyle \lim_{\delta \to 0^+} b_\delta(p,s,N(s)) \ds\right] \\
& \le E(T_2) \Expo\left[ \int_{T_2}^{T_2+\lambda \lfloor \frac{t-T_2}{\lambda} \rfloor} \displaystyle \lim_{\delta \to 0^+} b_\delta(p,s,N(s)) \ds \, + \right. \\
& \hspace{4cm} \left. + \int_{T_2+\lambda \lfloor \frac{t-T_2}{\lambda} \rfloor}^t \displaystyle \beta(s) \lim_{\delta \to 0^+} \frac{\phi(N(s),N(s),\delta)}{\delta}p \ds\right] \\
& < E(T_2) \Expo\left[-\eta \lfloor \frac{t-T_2}{\lambda}\rfloor +\beta_SMp\lambda\right],
\end{split}
\]
for all $t \ge T_2$. We conclude that $0 \le \displaystyle \limsup_{t \to +\infty} I(t) \le p \displaystyle \limsup_{t \to +\infty} E(t) = 0$ assuming that $W(p,t)>0$ for all $t \ge T_1$.

Assume now that $W(p,t) \le 0$ for all $t \ge T_1$.
By the third equation in~\eqref{eq:ProblemaPrincipal} we have
\begin{equation}\label{eq:majora-I}
I'(t)\le\epsilon(t)I(t)/p -(\mu(t)+\gamma(t))I(t)
=(\epsilon(t)/p-\mu(t)-\gamma(t))I(t)
\end{equation}
for all $t \ge T_1$. Since $R_e^*(\lambda,p)<1$, by~\eqref{eq:r*elambdap} we conclude that there are constants $\eta_2>0$ and $T_3 \ge T_1$ such that
\begin{equation}\label{eq:r*<0}
\int_t^{t+\lambda}  \epsilon(s)/p-\mu(s)-\gamma(s) \ds < -\eta_2 < 0,
\end{equation}
for all $t \ge T_3$. Thus,  by~\eqref{eq:majora-I} and~\eqref{eq:r*<0}, we have
$$I(t) \le I(T_3) \e^{\int_{T_3}^t \epsilon(s)/p-\mu(s)-\gamma(s) \ds} \le I(T_3) \e^{-\eta_2\lfloor\frac{t-T_3}{\lambda}\rfloor +\frac{\lambda\varepsilon_S}{p}},$$
for all $t \ge T_3$. We conclude that $I(t) \to 0$, assuming that $W(p,t) \le 0$ for all $t \ge T_1$. Therefore we obtain~\ref{teo:Extinction}. in the theorem.

Assume now that there are constants $\lambda>0$, $p >0$ such that
$R_p(\lambda,p)>1$, $R^*_p(\lambda,p)>1$ and $G(p)<0$ for all $t \ge T$ and let $(S(t),E(t),I(t),R(t))$ be some fixed solution of~\eqref{eq:ProblemaPrincipal} with $S(T_0), E(T_0), I(T_0), R(T_0)>0$ for some $T_0>0$.

Since $R_p(\lambda,p)>1$, by~\eqref{eq:rplambdap} and~H\ref{cond-CC5}) we conclude that there are constants $0 < \delta_2 \le K$, $\eta>0$ and $T_4>0$ such that
\begin{equation}\label{eq:persist-condition}
\int_t^{t+\lambda} \beta(s) \frac{\phi(N(s),N(s),\delta)}{\delta} \, p - \mu(s) - \epsilon(s) \ds > \eta > 0,
\end{equation}
for all $t \ge T_4$ and $0< \delta \le \delta_2$ and that $g_\delta(p,t,N(t))<0$ for all $t \ge T_5$ and $0< \delta \le \delta_2$. By Proposition~\ref{subsection:BS}, we may also assume that $(S(t),N(t),I(t)) \in \Delta_{0,K}$ for all $t \ge T_4$.

By~\eqref{eq:d-Lambda-} we can choose $\eps_1>0$, $0< \eps_2 <\delta_2$, $\eps_3>0$  and $0<\eta_1 < \eta$ such that, for all $t \ge T_4$, we have
\begin{equation}\label{cond-int-1-contrad}
\int_t^{t+\lambda} \beta(s)M \eps_2 - (\mu(s) + \epsilon(s)) \eps_1 \ds < -\eta_1
\end{equation}
\begin{equation}\label{cond-int-2-contrad}
\int_t^{t+\lambda} \gamma(s) \eps_2 - (\mu(s)+\eta(s))\eps_3 \ds < -\eta_1
\end{equation}
\begin{equation}\label{cond-4-contrad}
\theta_1=\frac{m_1}{2}-\eps_1-[1+\beta_S M\lambda+\gamma_S\lambda]\eps_2-\eps_3>0
\end{equation}
and
\begin{equation}\label{cond-int-3-contrad}
\kappa = K_{\theta_1}[\eps_1+[1+\beta_S M\lambda+\gamma_S\lambda]\eps_2+\eps_3] < \frac{\eta}{2p\beta_S\lambda}
\end{equation}
where $M$ is given by~\eqref{cond-CC4}.

We will show that
\begin{equation}\label{eq:limsupI>eps2}
  \limsup_{t \to +\infty} I(t) > \eps_2.
\end{equation}
Assume by contradiction that it is not true. Then there exists $T_5>T_4$ such that, for all $t \ge T_5$, we have
\begin{equation}\label{eq:contradictioneps2}
I(t) \le \eps_2.
\end{equation}

Suppose that $E(t) \ge \eps_1$ for all $t \ge T_5$. Then, by the second equation in~\eqref{eq:ProblemaPrincipal},~\eqref{cond-CC4}, H\ref{cond-CC3}) and \eqref{cond-int-1-contrad}, we have for all $t \ge T_5$
\[
\begin{split}
E(t)
& = E(T_5)+\int_{T_5}^t \beta(s) \, \phi(S(s),N(s),I(s)) - (\mu(s) + \epsilon(s))E(s) \ds \\
& = E(T_5)+\int_{T_5}^t \beta(s) \, \frac{\phi(S(s),N(s),I(s))}{I(s)} I(s) - (\mu(s) + \epsilon(s))E(s) \ds \\
& \le E(T_5)+\int_{T_5}^t \beta(s) M \eps_2  - (\mu(s) + \epsilon(s))\eps_1 \ds\\
& = E(T_5)+\int_{T_5}^{T_5+\lfloor \frac{t-T_5}{\lambda} \rfloor \lambda} \beta(s) M \eps_2  - (\mu(s) + \epsilon(s))\eps_1 \ds \\
& \quad \quad + \int_{T_5+\lfloor \frac{t-T_5}{\lambda} \rfloor \lambda}^t \beta(s) M \eps_2 - (\mu(s) + \epsilon(s))\eps_1 \ds\\
& < E(T_5) - \eta_1 \lfloor \frac{t-T_5}{\lambda} \rfloor + \beta_S M \eps_2 \lambda
\end{split}
\]
and thus $E(t) \to -\infty$ witch contradicts~\ref{cond-2-bs}) in Proposition~\ref{subsection:BS}. We conclude that there exists $T_6 \ge T_5$ such that $E(T_6)<\eps_1$. Suppose that there exists a $T_7 > T_6$ such that $E(T_7)> \eps_1 + \beta_S M \eps_2\lambda$. Then we conclude that there must exist $T_8 \in ]T_6,T_7[$ such that $E(T_8)=\eps_1$ and $E(t)>\eps_1$ for all $t \in ]T_8,T_7]$. Let $n \in \N_0$ be such that $T_7 \in [T_8+n \lambda, T_8 + (n+1)\lambda]$. Then, by the second equation in~\eqref{eq:ProblemaPrincipal},~\eqref{cond-CC4},~\eqref{eq:contradictioneps2} and~\eqref{cond-int-1-contrad} we have
\[
\begin{split}
\quad \eps_1 + \beta_SM \eps_2 \lambda
& < E(T_7) \\
& = E(T_8) + \int_{T_8}^{T_7}
\beta(s) \phi(S(s),N(s),I(s)) - (\mu(s)+\epsilon(s))E(s) \ds \\
& < E(T_8) + \int_{T_8}^{T_7}
\beta(s) M \eps_2 - (\mu(s)+ \epsilon(s))\eps_1 \ds \\
& \le \eps_1 -\eta_1 n + \int_{T_8+n\lambda}^{T_7} \beta_S M \eps_2 \ds \\
& \le \eps_1 + \beta_S M \eps_2 \lambda
\end{split}
\]
and this is a contradiction. We conclude that, for all $t \ge T_7$ we have
\begin{equation}\label{eq:boundE}
E(t) \le \eps_1 + \beta_S M \eps_2 \lambda.
\end{equation}

Suppose that $R(t) \ge \eps_3$ for all $t \ge T_9$. Then,
by the fourth equation in~\eqref{eq:ProblemaPrincipal},~\eqref{eq:contradictioneps2} and \eqref{cond-int-2-contrad}, we have for all $t \ge T_9$
\[
\begin{split}
R(t)
& = R(T_9)+\int_{T_9}^t \gamma(s)I(s) - (\mu(s) + \eta(s))R(s) \ds \\
& \le R(T_9)+\int_{T_9}^t \gamma(s)\eps_2 - (\mu(s) + \eta(s))\eps_3 \ds \\
& = R(T_9)+\int_{T_9}^{T_9+\lambda \lfloor \frac{t-T_9}{\lambda} \rfloor} \gamma(s)\eps_2 - (\mu(s) + \eta(s))\eps_3 \ds \\
& \quad \quad +\int_{T_9+\lambda \lfloor \frac{t-T_9}{\lambda} \rfloor}^t \gamma(s)\eps_2 - (\mu(s) + \eta(s))\eps_3 \ds \\
& < R(T_9) - \eta_1 \lfloor \frac{t-T_9}{\lambda} \rfloor + \gamma_S \eps_2 \lambda
\end{split}
\]
and thus $R(t) \to -\infty$ witch contradicts~\ref{cond-2-bs}) in Proposition~\ref{subsection:BS}. We conclude that there exists $T_{10} \ge T_9$ such that $R(T_{10})<\eps_3$. Suppose that there exists a $T_{11} \ge T_{10}$ such that $R(T_{11})> \eps_3 + \gamma_S \eps_2 \lambda$. Then we conclude that there must exist $T_{12} \in ]T_{10},T_{11}[$ such that $R(T_{12})=\eps_3$ and $R(t)>\eps_3$ for all $t \in ]T_{12},T_{11}]$. Let $n \in \N_0$ be such that $T_{11} \in [T_{12}+n \lambda, T_{12} + (n+1)\lambda]$. Then, by the fourth equation in~\eqref{eq:ProblemaPrincipal},~\eqref{eq:contradictioneps2} and \eqref{cond-int-2-contrad} we have
\[
\begin{split}
\quad \eps_3 + \gamma_S \eps_2 \lambda
& < R(T_{11}) \\
& = R(T_{12}) + \int_{T_{12}}^{T_{11}} \gamma(s)I(s)-(\mu(s)+\eta(s))R(s)\ds \\
& < R(T_{12})+\int_{T_{12}}^{T_{11}}\gamma(s)\eps_2-(\mu(s)+\eta(s))\eps_3 \ds\\
& < \eps_3 -\eta_1 n + \int_{T_{12}+n\lambda}^{T_{11}} \gamma_S \eps_2 \ds \\
& \le \eps_3 + \gamma_S \eps_2\lambda
\end{split}
\]
and this is a contradiction. We conclude that, for all $t \ge T_{10}$ we have
\begin{equation}\label{eq:boundR}
R(t) \le \eps_3 + \gamma_S \eps_2\lambda.
\end{equation}

By Lemma~\ref{lemma:tech} there exists $T_{13} \ge T_{10}$ such that $p E(t) \le I(t)$, for all $t \ge T_{13}$. According to the second equation in~\eqref{eq:ProblemaPrincipal} and~H\ref{cond-CC3}) and recalling that by~\eqref{eq:contradictioneps2} and the assumptions we have $I(t) \le \eps_2 < \delta_2$, for all $t \ge T_{13}$ we get,
\begin{equation}\label{eq:majE-inf}
\begin{split}
E'(t)
& = \beta(t) \phi(S(t),N(t),I(t))-(\mu(t)+\epsilon(t))E(t) \\
& = \beta(t) \frac{\phi(S(t),N(t),I(t))}{I(t)}I(t)-(\mu(t)+\epsilon(t))E(t) \\
& \ge \beta(t) \frac{\phi(S(t),N(t),\delta_2)}{\delta_2}I(t)
-(\mu(t)+\epsilon(t))E(t)
\end{split}
\end{equation}

By~\eqref{eq:contradictioneps2},~\eqref{eq:boundE} and~\eqref{eq:boundR}, we have, for all $t \ge T_{13}$,
\begin{equation}\label{eq:s-to-z-1}
\begin{split}
N(t)-S(t) & =  E(t)+I(t)+R(t) \\
& \le \eps_1+\beta_S M \eps_2 \lambda +\eps_2+\eps_3+\gamma_S \eps_2 \lambda\\
& = \eps_1+[1+\beta_S M\lambda+\gamma_S\lambda]\eps_2+\eps_3.
\end{split}
\end{equation}
Un the other side, by~\ref{cond-4-aux}) in Proposition~\ref{eq:subsectionAS}, there is $T_{14}>T_{13}$ such that, for all $t \ge T_{14}$, we have $N(t)\ge m_1/2$. Therefore, for all $t \ge T_{14}$, we have by~\eqref{eq:s-to-z-1} and~\eqref{cond-4-contrad}
\[
\begin{split}
S(t)
& \ge N(t)-\eps_1-[1+\beta_S M\lambda+\gamma_S\lambda]\eps_2-\eps_3\\
& \ge \frac{m_1}{2}-\eps_1-[1+\beta_S M\lambda+\gamma_S\lambda]\eps_2-\eps_3 \\
& = \theta_1 >0.
\end{split}
\]
Thus, by~H\ref{cond-CC2}),~\eqref{eq:s-to-z-1} and~\eqref{cond-int-3-contrad} we have
\[
\begin{split}
  |\phi(S(t),N(t),\delta_2)-\phi(N(t),N(t),\delta_2)|
  & \le K_{\theta_1} |S(t)-N(t)|\delta_2 \\
  & \le K_{\theta_1} [\eps_1+[1+\beta_S M\lambda+\gamma_S\lambda]\eps_2+\eps_3] \delta_2\\
  & = \kappa \delta_2.
\end{split}
\]
Therefore, by~\eqref{eq:majE-inf},~\eqref{eq:s-to-z-1},~\eqref{cond-int-3-contrad}, H\ref{cond-CC2}) and since $p E(t) \le I(t)$, we obtain, for all $t \ge T_{14}$,
\begin{equation}\label{eq:majE-inf2}
\begin{split}
E'(t)
& \ge \beta(t) \ \frac{\phi(N(t),N(t),\delta_2)-\kappa \delta_2}{\delta_2} \ I(t) -(\mu(t)+\epsilon(t))E(t)\\
& = \left[\beta(t) \frac{\phi(N(t),N(t),\delta_2)}{\delta_2}-\beta(t)\kappa \right]I(t)
-(\mu(t)+\epsilon(t))E(t) \\
& \ge \left[ \beta(t) \frac{\phi(N(t),N(t),\delta_2)}{\delta_2}p-\beta(t)\kappa p
-\mu(t)-\epsilon(t)\right] E(t).
\end{split}
\end{equation}

Therefore, integrating~\eqref{eq:majE-inf2} an using~\eqref{eq:persist-condition} and~\eqref{cond-int-3-contrad}, we have
\[
\begin{split}
& E(t) \ge E(T_{14}) \, \text{Exp}\left[ \int_{T_{14}}^t \beta(s) \frac{\phi(N(s),N(s),\delta_2)}{\delta_2}p-\mu(s)-\epsilon(s) -\beta_S\kappa p \ds \right] \\
& = E(T_{14}) \, \text{Exp}\left[ \int_{T_{14}}^{T_{14}+\lambda\lfloor \frac{t-T_{14}}{\lambda}\rfloor} \beta(s) \frac{\phi(N(s),N(s),\delta_2)}{\delta_2}p-\mu(s)-\epsilon(s)-\beta_S\kappa p \ds \right. + \\
& \quad \quad \left. + \int_{T_{14}+\lambda\lfloor \frac{t-T_{14}}{\lambda}\rfloor}^t \beta(s) \frac{\phi(N(s),N(s),\delta_2)}{\delta_2}p-\mu(s)-\epsilon(s)-\beta_S\kappa p \ds \right]. \\
& \ge E(T_{14}) \ \text{Exp} \left[ (\eta-\beta_S\kappa p \lambda) \lfloor \frac{t-T_{14}}{\lambda} \rfloor -(\mu_S+\eps_S+\beta_S\kappa p)\lambda \right]\\
& \ge E(T_{14}) \ \text{Exp} \left[ \eta/2 \lfloor \frac{t-T_{14}}{\lambda} \rfloor -(\mu_S+\eps_S+\beta_S\kappa p)\lambda \right]
\end{split}
\]
and we conclude that $E(t) \to +\infty$. This is a contradiction with the boundedness of $E$ established in Proposition~\ref{subsection:BS}. We conclude that $\displaystyle \limsup_{t \to +\infty} I(t)> \eps_2$ holds.

Next we prove that
\begin{equation} \label{eq:liminf-ge-I1}
\liminf_{t \to +\infty} I(t) \ge \ell,
\end{equation}
where $\ell>0$ is some constant to be determined.

Similarly to~\eqref{cond-int-1-contrad}--\eqref{cond-int-3-contrad}, letting $\lambda_3=k\lambda>0$ with $k \in \N$ be sufficiently large and recalling~\eqref{eq:d-Lambda-}, we conclude that there is $T_{15} \ge T_{14}$, $\eps_1>0$, $\eps_2>0$, $\eps_3>0$ sufficiently small such that for all $t \ge T_{15}$ we have
\begin{equation}\label{liminf-dem0}
N(t)=S(t)+E(t)+R(t)+I(t)<2m_2,
\end{equation}
\begin{equation}\label{liminf-dem1}
\int_t^{t+\lambda_3} \beta(t)M\eps_2 - (\mu(s) + \epsilon(s)) \eps_1 \ds < -2m_2,
\end{equation}
\begin{equation}\label{liminf-dem2}
\int_t^{t+\lambda_3} \gamma(s) \eps_2 - (\mu(s)+\delta(s))\eps_3 \ds < -2m_2,
\end{equation}
\begin{equation}\label{liminf-dem3}
\int_t^{t+\lambda_3} \beta(s) \ \frac{\phi(N(s),N(s),\delta_2)}{\delta_2}p
-\mu(s)-\epsilon(s) \ds > k\eta,
\end{equation}
\[
\theta_1=\frac{m_1}{2}-\eps_1-[1+\beta_S M\lambda+\gamma_S\lambda]\eps_2-\eps_3>0.
\]
\begin{equation}\label{liminf-dem3b}
\kappa = K_{\theta_1}[\eps_1+[1+\beta_S M\lambda+\gamma_S\lambda]\eps_2+\eps_3] < \min \left\{ \frac{\eta}{2\beta_Sp\lambda}, \frac{2(\mu_S+\gamma_S)}{\beta_Sp} \right\}
\end{equation}

According to~\eqref{eq:limsupI>eps2} there are only two possibilities: there exists $T>0$ such that $I(t) \ge \eps_2$ for all $t \ge T$ or $I(t)$ oscillates about $\eps_2$.

In the first case we set $\ell=\eps_2$ and we obtain~\eqref{eq:liminf-ge-I1}.

Otherwise we must have the second case. Let $T_{17} \ge T_{16}>T_{15}$ be constants such that $W(p,t) \le 0$, for all $t \ge T_{15}$ (we may assume this by Lemma~\ref{lemma:tech}) and that $I(T_{16})=I(T_{17})=\eps_2$ and $I(t)<\eps_2$ for all $t \in [T_{16},T_{17}]$. Suppose first that $T_{17}-T_{16} \le C +2\lambda_3$ where $C$ satisfies
\begin{equation}\label{eq:def-C}
C \ge \frac{1}{\mu_S+\gamma_S} \, \left[(3\mu_S+\gamma_S+2\epsilon_S)\lambda_3+\ln \frac{2}{\eta k}\right],
\end{equation}
From the third equation in~\eqref{eq:ProblemaPrincipal} we have
\begin{equation}\label{eq:maj-dIdt}
I'(t) \ge -(\mu_S+\gamma_S)I(t).
\end{equation}
Therefore, we obtain for all $t \in [T_{16},T_{17}]$,
$$I(t) \ge I(T_{16}) \e^{-\int_{T_{16}}^t \mu_S+\gamma_S \ds} \ge \eps_2 \e^{-(\mu_S+\gamma_S)(C+2\lambda_3)}.$$
On the other hand, if $T_{17}-T_{16} > C +2\lambda_3$ then, from~\eqref{eq:maj-dIdt} we obtain
$$I(t) \ge \eps_2 \e^{-(\mu_S+\gamma_S)(C+2\lambda_3)},$$
for all $t \in [T_{16},T_{16}+C+2\lambda_3]$. Set $\ell=\eps_2 \e^{-(\mu_S+\gamma_S)(C+2\lambda_3)}$. We will show that $I(t) \ge \ell$ for all $t \in [T_{16}+C+2\lambda_3,T_{17}]$ and this establishes the result.

Suppose that $E(t) \ge \eps_1$ for all $t \in [T_{16},T_{16}+\lambda_3]$.
Then, from the second equation in~\eqref{eq:ProblemaPrincipal},~\eqref{cond-CC4},~\eqref{liminf-dem0} and~\eqref{liminf-dem1} we have
\[
\begin{split}
& \quad \ E(T_{16}+\lambda_3) \\
& = E(T_{16})+\int_{T_{16}}^{T_{16}+\lambda_3} \beta(s) \, \phi(S(s),N(s),I(s)) - (\mu(s)+\gamma(s)) E(t) \ds \\
& \le E(T_{16})+\int_{T_{16}}^{T_{16}+\lambda_3} \beta(s) M\eps_2 - (\mu(s)+\gamma(s)) \eps_1 \ds \\
& < 2m_2 - 2m_2=0,
\end{split}
\]
witch is a contradiction with~\ref{cond-1-bs}) in Proposition~\ref{subsection:BS}. Therefore, there exists a $T_{18} \in [T_{16},T_{16}+\lambda_3]$ such that $E(T_{18})<\eps_1$. Then, as in the proof of~\eqref{eq:boundE} and using~\eqref{liminf-dem1}, we can show that $E(t) \le \eps_1 + \beta_S M \eps_2 \lambda_3$, for all $t \ge T_{18}$. Also proceeding as in the proof of~\eqref{eq:boundR} and using~\eqref{liminf-dem2} we may assume that  $R(t) \le \eps_3 + \gamma_S \eps_2 \lambda_3$ for all $t \ge T_{18}$.

By~\eqref{eq:maj-dIdt} we have
\begin{equation} \label{eq:majI-mugamma}
I(t) \ge I(T_{16}) \e^{-\int_{T_{16}}^t \mu_S+\gamma_S \ds} = I(T_{16}) \e^{-(\mu_S+\gamma_S)(t-T_{16})} \ge
\eps_2 \e^{-(\mu_S+\gamma_S)\lambda_3}
\end{equation}
for all $t \in [T_{16}+\lambda_3,T_{16}+2\lambda_3]$.

Assume that there exists a $T_{19}>0$ such that $T_{19} \in [T_{16}+C+2\lambda_3,T_{17}]$, $I(T_{19}) = \ell$ and $I(t) \ge \ell$ for all $t \in [T_{16},T_{19}]$ (otherwise the result is established). By~\eqref{eq:s-to-z-1} and~\eqref{eq:majI-mugamma} we have, for all $t \in [T_{16}+\lambda_3,T_{16}+2\lambda_3]$,
\begin{equation}\label{eq:EEEEE}
\begin{split}
E'(t)
& \ge  \beta(t) \, \frac{\phi(S(t),N(t),\delta_2)}{\delta_2} I(t)-(\mu_S+\epsilon_S) E(t) \\
& \ge  \beta(t) \, \left( \frac{\phi(N(t),N(t),\delta_2)}{\delta_2}-\kappa \right) \eps_2 \e^{-(\mu_S+\gamma_S)\lambda_3} -(\mu_S+\epsilon_S) E(t),
\end{split}
\end{equation}
where $\kappa$ is given by~\eqref{cond-int-3-contrad}. By~\eqref{eq:EEEEE},~\eqref{liminf-dem3} and~\eqref{liminf-dem3b}, we get
\begin{equation}\label{eq:hihihi}
\begin{split}
&E(T_{16}+2\lambda_3) \\
& \ge e^{-(\mu_S+\epsilon_S)\lambda_3}E(T_{16}+\lambda_3) + \int_{T_{16}+\lambda_3}^{T_{16}+2\lambda_3} \beta(s) \,\left(\frac{ \phi(N(s),N(s),\delta_2)}{\delta_2}-\kappa \right) \eps_2 \times \\
& \quad \quad \times \e^{-(\mu_S+\gamma_S)\lambda_3}e^{-(\mu_S+\epsilon_S)(T_{16}+2\lambda_3-s)} \ds \\
& \ge e^{-(\mu_S+\gamma_S)\lambda_3} \int_{T_{16}+\lambda_3}^{T_{16}+2\lambda_3} \beta(s) \,\left(\frac{\phi(N(s),N(s),\delta_2)}{\delta_2}-\kappa \right) \eps_2 \e^{-(\mu_S+\epsilon_S)\lambda_3} \ds  \\
& \ge e^{-(2\mu_S+\gamma_S+\epsilon_S)\lambda_3} \eps_2 \int_{T_{16}+\lambda_3}^{T_{16}+2\lambda_3} \beta(s) \,\frac{\phi(N(s),N(s),\delta_2)}{\delta_2}-\beta_S \kappa \ds  \\
& \ge e^{-(2\mu_S+\gamma_S+\epsilon_S)\lambda_3} \eps_2 \, (k\eta/p-\beta_S \kappa \lambda_3)\\
& = e^{-(2\mu_S+\gamma_S+\epsilon_S)\lambda_3} \eps_2 \, (\eta/p-\beta_S \kappa \lambda) \, k \\
& > e^{-(2\mu_S+\gamma_S+\epsilon_S)\lambda_3} \eps_2 \eta k /(2p).
\end{split}
\end{equation}
On the other side, by \eqref{eq:majE-inf2} we obtain
\begin{equation}\label{eq:xpto}
  E'(t) \ge \left[ \beta(t) \, \frac{\phi(N(t),N(t),\delta_2)}{\delta_2}p
-\beta(t) \kappa p-\mu(t)-\epsilon(t)\right] E(t).
\end{equation}
and thus, by \eqref{eq:hihihi},~\eqref{liminf-dem3},~\eqref{liminf-dem3b} and~\eqref{eq:xpto}, letting $n=2+\lfloor \frac{T_{19}-T_{16}}{\lambda_3}\rfloor$
\[
\begin{split}
& \eps_2 \e^{-(\mu_S+\gamma_S)(C+2\lambda_3)}\\
& = I(T_{19}) \\
& \ge p E(T_{19})\\
& \ge p E(T_{16}+2\lambda_3) \text{Exp}\left[  \int_{T_{16}+2\lambda_3}^{T_{19}} \beta(s) \frac{\phi(N(s),N(s),\delta_2)}{\delta_2}p-\beta(s)\kappa p
-\mu(s)-\epsilon(s) \ds \right]\\
& \ge p E(T_{16}+2\lambda_3) \text{Exp}\left[  \int_{T_{16}+2\lambda_3}^{T_{16}+n\lambda_3} \beta(s) \frac{\phi(N(s),N(s),\delta_2)}{\delta_2}p-\beta(s)\kappa p
-\mu(s)-\epsilon(s) \ds \right. \\
& \quad \quad + \left. \int_{T_{16}+n\lambda_3}^{T_{19}} \beta(s) \frac{\phi(N(s),N(s),\delta_2)}{\delta_2}p-\beta(s)\kappa p
-\mu(s)-\epsilon(s) \ds\right]\\
& > pe^{-(3\mu_S+\gamma_S+2\epsilon_S)\lambda_3}\eps_2 \frac{\eta k}{2p} \e^{(n-2)(\eta k - \beta_S \kappa p \lambda_3)} \e^{-\beta_S \kappa p \lambda_3}\\
& > pe^{-(3\mu_S+\gamma_S+2\epsilon_S)\lambda_3}\eps_2 \frac{\eta k}{2p} \e^{(n-2)\eta k /2} \e^{-\beta_S \kappa p \lambda_3}\\
& > \frac12  e^{-(3\mu_S+\gamma_S+2\epsilon_S)\lambda_3}\eps_2 \eta k \e^{-\beta_S \kappa p \lambda_3}\\
& > \frac12 e^{-(3\mu_S+\gamma_S+2\epsilon_S)\lambda_3}\eps_2 \eta k \e^{-2(\mu_S+\gamma_S) \lambda_3}
\end{split}
\]
and this implies that
\[
C < \frac{1}{\mu_S+\gamma_S} \, \left[(3\mu_S+\gamma_S+2\epsilon_S)\lambda_3+\ln \frac{2}{\eta k}\right],
\]
contradicting~\eqref{eq:def-C}. This shows~\eqref{eq:liminf-ge-I1} and proves~\ref{teo:Permanence}. in the theorem.

We recall that, by~\eqref{eq:infboundmu}, there are $\mu_1,\mu_2>0$ sufficiently small and $T>0$ sufficiently large such that, for all $t \ge t_0 \ge T$ we have
$$-\int_{t_0}^t \mu(s) \ds \le -\mu_1 (t-t_0)+\mu_2.$$
Assume that $R_e(\lambda,p)<1$, $R_e^*(\lambda,p)<1$ and $G(p)<0$ and let $(S_1(t),0,0,R_1(t))$ be a disease-free solution of~\eqref{eq:ProblemaPrincipal} with $S_1(t_0)=S_{1,0}$ and $R_1(t_0)=R_{1,0}$ and let $(S(t),E(t),I(t),R(t))$ with $S(t_0)=S_0$, $E(t_0)=E_0$, $I(t_0)=I_0$ and $R(t_0)=R_0$ be some solution of~\eqref{eq:ProblemaPrincipal}.

Since we are in the conditions of~\ref{teo:Extinction}), for each $\eps>0$ there is $T_\eps>0$ such that $I(t) \le \eps$ for each $t \ge T_\eps$. Therefore, using the second equation in~\eqref{eq:ProblemaPrincipal}, we get, for $t \ge T_\eps$,
\[
\begin{split}
  E'(t)
  & = \beta(t) \frac{\phi(S(t),N(t),I(t))}{I(t)}I(t)-(\mu(t)+\epsilon(t))E(t) \\
  & \le \beta_S M \eps - \mu(t) E(t)
\end{split}
\]
and thus, for $t \ge t_0 \ge \max\{T,T_\eps\}$, we have
\[
\begin{split}
E(t)
& \le \e^{-\int_{t_0}^t \mu(s) \ds} E_0 + \int_{t_0}^t \beta_S M \eps \e^{-\int_u^t \mu(s) \ds} \du \\
& \le \e^{-\mu_1 (t-t_0)+\mu_2} E_0 + \beta_S M \eps \int_{t_0}^t \e^{-\mu_1 (t-u)+\mu_2} \du \\
& = \e^{-\mu_1 (t-t_0)+\mu_2} E_0 + \frac{\beta_S M \e^{\mu_2}}{\mu_1}  (1-\e^{-\mu_1(t-t_0)}) \eps
\end{split}
\]
and, since $\eps>0$ is arbitrary, we conclude that
\begin{equation}\label{eq:Eto0}
  \limsup_{t \to +\infty} E(t) = 0.
\end{equation}

By the fourth equation in~\eqref{eq:ProblemaPrincipal} and setting $w=R(t)-R_1(t)$, we have, for $t \ge T_\eps$
\[
\begin{split}
  w'(t)
  & = \gamma(t) I(t)-(\mu(t)+\eta(t))w(t) \\
  & \le \gamma_S \eps - (\mu(t)+\eta(t)) w(t)
\end{split}
\]
and thus, for $t \ge t_0 \ge \max\{T,T_\eps\}$, we have
\[
\begin{split}
w(t)
& \le \e^{-\int_{t_0}^t \mu(s)+\eta(s) \ds} (R_0-R_{0,1}) + \int_{t_0}^t \gamma_S \eps \e^{-\int_u^t \mu(s)+\eta(s) \ds} \du \\
& \le \e^{-\mu_1 (t-t_0)+\mu_2} (R_0-R_{0,1}) + \gamma_S \eps \int_{t_0}^t \e^{-\mu_1 (t-u)+\mu_2} \du \\
& = \e^{-\mu_1 (t-t_0)+\mu_2} (R_0-R_{0,1}) + \frac{\gamma_S \e^{\mu_2}}{\mu_1}  (1-\e^{-\mu_1(t-t_0)}) \eps
\end{split}
\]
and, since $\eps>0$ is arbitrary, we conclude that $\displaystyle \limsup_{t\to +\infty} R(t)-R_1(t) \le 0$. Repeating he computations with $w(t)$ replaced by $w_1(t)=R_1(t)-R(t)$ we conclude that $\displaystyle \limsup_{t\to +\infty} R_1(t)-R(t) \le 0$. Thus
\begin{equation}\label{eq:R-R0to0}
  \limsup_{t \to +\infty} |R(t)-R_1(t)| = \limsup_{t \to +\infty} |w(t)| = 0.
\end{equation}

Let $N=S+E+I+R$ and $N_1=S_1+R_1$ and set $u(t)=N(t)-N_1(t)$. By~\eqref{eq:SistemaAuxiliar}, for $t \ge T$,
we have $u'(t) = - \mu(t) u(t)$ and thus, for $t \ge t_0 \ge \max\{T,T_\eps\}$ we have
\[
u(t) = \e^{-\int_{t_0}^t \mu(s) \ds} (N_0-N_{0,1}) \le \e^{-\mu_1(t-t_0)+\mu_2} (N_0-N_{0,1}).
\]
Therefore
\begin{equation}\label{eq:S-S0to0}
\begin{split}
& \limsup_{t \to +\infty} |S(t)-S_1(t)| \\
& = \limsup_{t \to +\infty} |N(t)-E(t)-I(t)-R(t)-(N_1(t)-R_1(t))| \\
& \le \limsup_{t \to +\infty} \left(\, |N(t)-N_1(t)|+E(t)+I(t)+|R(t)-R_1(t)| \, \right)=0.
\end{split}
\end{equation}

By~\eqref{eq:Eto0},~\eqref{eq:R-R0to0} and~\eqref{eq:S-S0to0}, we have~\ref{teo:Extinction-behavior}. in the theorem.

\subsection{Proof of Theorem~\ref{teo:structural_stability}.}
Let $b_\delta^\tau$ denote the function in~\eqref{eq:bSlambda} with $\phi$ replaced by $\phi_\tau$.
Let $\delta>0$. We have that there is $L>0$ such that for $\tau \in [-L,L]$ we have by assumption $\sup_{t \ge 0} \left|\beta_\tau(t)-\beta(t)\right|<\delta$ and thus $\beta_\tau(t) < \beta_S + \delta$ for all $t\ge 0$. Write $B=\beta_S+\delta$. By~\eqref{eq:bSlambda} and~\eqref{cond-CC4} we have
\begin{equation} \label{eq:dif_b}
\begin{split}
  & |b^\tau_\delta(p,t,z(t))-b_\delta(p,t,z(t))| \\
  & = \left|\beta_\tau(s)\dfrac{\phi_\tau(z(t),z(t),\delta)}{\delta}p-\mu(t)-\epsilon_\tau(t)-\beta(s)\dfrac{\phi(z(t),z(t),\delta)}{\delta}p
  +\mu(t)+\epsilon(t) \right| \\
  & \le \left| \beta_\tau(t) \right| p \left| \dfrac{\phi_\tau(z(t),z(t),\delta)-\phi(z(t),z(t),\delta)}{\delta} \right| \\
  & \quad \quad + \left|\beta_\tau(t)-\beta(t)\right| p \dfrac{\phi(z(t),z(t),\delta)}{\delta}+\|\epsilon_\tau-\epsilon\|_\infty\\
  & \le B p \left| \dfrac{\phi_\tau(z(t),z(t),\delta)-\phi(z(t),z(t),\delta)}{\delta} \right|+
    Mp\|\beta_\tau-\beta\|_\infty+\|\epsilon_\tau-\epsilon\|_\infty
\end{split}
\end{equation}
Since for $\tau \in [-L,L]$, $\phi_\tau$ is differentiable and $\phi_\tau(x,y,0)=\phi(x,y,0)=0$, we get
\begin{equation}\label{eq:Taylor}
\begin{split}
& \left|\phi_\tau(z(t),z(t),\delta)-\phi(z(t),z(t),\delta)\right| \\
& = \left|\partial_3 \phi_\tau(z(t),z(t),0)\delta+R^\tau(\delta)- \partial_3 \phi(z(t),z(t),0)\right|\delta+R(\delta)|\\
& \le \left|\partial_3 \phi_\tau(z(t),z(t),0)- \partial_3 \phi(z(t),z(t),0)\right|\delta+|R^\tau(\delta)|+|R(\delta)|
\end{split}
\end{equation}
where $R(\delta)/\delta \to 0$ and $R^\tau(\delta)/\delta \to 0$ as $\delta \to 0$, where $\partial_3$ denotes the partial derivative with respect to the third coordinate. By~\eqref{eq:Taylor} we obtain
\begin{equation} \label{eq:diff_b_over_delta}
\begin{split}
& \dfrac{\left|\phi_\tau(z(t),z(t),\delta)-\phi(z(t),z(t),\delta)\right|}{\delta} \\
& \le \left|\partial_3 \phi_\tau(z(t),z(t),0)- \partial_3 \phi(z(t),z(t),0)\right|+\dfrac{|R^\tau(\delta)|}{\delta}+ \dfrac{|R(\delta)|}{\delta} \\
& \le \| \phi_\tau - \phi \|_{\Delta_{0,K}}+\dfrac{|R^\tau(\delta)|}{\delta}+ \dfrac{|R(\delta)|}{\delta}
\end{split}
\end{equation}
Thus, by~\eqref{eq:dif_b} and~\eqref{eq:diff_b_over_delta} we get
\[
\begin{split}
  & |b^\tau_\delta(p,s,z(s))-b_\delta(p,s,z(s))| \\
  & \le B p \left| \dfrac{\phi_\tau(z(t),z(t),\delta)-\phi(z(t),z(t),\delta)}{\delta} \right|+
    Mp\|\beta_\tau-\beta\|_\infty+\|\epsilon_\tau-\epsilon\|_\infty \\
  & \le B p \left( \| \phi_\tau - \phi \|_{\Delta_{0,K}}+\dfrac{|R^\tau(\delta)|}{\delta}+ \dfrac{|R(\delta)|}{\delta} \right)+
    Mp\|\beta_\tau-\beta\|_\infty+\|\epsilon_\tau-\epsilon\|_\infty.
\end{split}
\]
Therefore
\[
\begin{split}
 & \lim_{\delta \to 0^+} |b^\tau_\delta(p,s,z(s))-b_\delta(p,s,z(s))|\\
 & \quad \quad \quad \le B p \| \phi_\tau - \phi \|_{\Delta_{0,K}} + Mp\|\beta_\tau-\beta\|_\infty+\|\epsilon_\tau-\epsilon\|_\infty.
\end{split}
\]
Thus
\[
\begin{split}
& \left| \int_t^{t+\lambda} \lim_{\delta \to 0^+} b^\tau_\delta(p,s,z(s))- \lim_{\delta \to 0^+} b_\delta(p,s,z(s)) \ds \right| \\
& \le \int_t^{t+\lambda} \lim_{\delta \to 0^+} |b^\tau_\delta(p,s,z(s))-b_\delta(p,s,z(s))| \ds  \le \Theta(\tau),
\end{split}
\]
where
$$ \Theta(\tau) = \lambda B p \| \phi_\tau - \phi \|_{\Delta_{0,K}} + Mp\lambda\|\beta_\tau-\beta\|_\infty+\lambda\|\epsilon_\tau-\epsilon\|_\infty. $$
Thus
$$\ln R_e(\lambda,p) - \Theta(\tau) \le \ln R_e^\tau(\lambda,p) \le \ln R_e(\lambda,p) + \Theta(\tau)$$
and then
$$R_e(\lambda,p) \e^{-\Theta(\tau)} \le R_e^\tau(\lambda,p) \le R_e(\lambda,p) \e^{\Theta(\tau)}$$
and sending $\tau \to 0$ we get
$$\lim_{\tau \to 0} R_e^\tau(\lambda,p) = R_e(\lambda,p).$$
Similarly we obtain also $\displaystyle \lim_{\tau \to 0} \left(R_e^*\right)^\tau(\lambda,p) = (R_e^*)(\lambda,p)$, $\displaystyle \lim_{\tau \to 0} R_p^\tau(\lambda,p) = R_p(\lambda,p)$, $\displaystyle \lim_{\tau \to 0} \left(R_p^*\right)^\tau(\lambda,p) = (R_p^*)(\lambda,p)$, $\displaystyle \lim_{\tau \to 0} G^\tau(p) = G(p)$ and $\displaystyle \lim_{\tau \to 0} H^\tau(p) = H(p)$.
\section{Discussion} \label{section:D}

In this paper we considered a non-autonomous family of SEIRS models with general incidence and obtained conditions for strong persistence and extinction of the infectives. We obtained corollaries for autonomous and asymptotically autonomous systems, where the conditions became thresholds, and we obtained also corollaries for the general incidence periodic setting and for non-autonomous Michaelis-Menten incidence functions. To illustrate our results we considered some concrete family of periodic models and we obtained regions of strong persistence and extinction for several pairs of parameters.

Naturally we would like to obtain explicit thresholds for the general non-autonomous family. The regions obtained in figure~\ref{fig1} suggest that big oscillations in the parameters lead to situations where our conditions do not apply. This is a consequence of the use of $\limsup$ and $\liminf$ in conditions~\eqref{eq:relambdap} to~\eqref{eq:Hp}. We believe that to overcome this problem we must have expressions that include some features more closely linked to the shape of the incidence functions.

Finally, we saw that our conditions for strong persistence and extinction are robust in some general family of $C^1$ parameter functions. Naturally, if we restrict our family to the autonomous setting, this has to do with the fact that the thresholds are given by~\eqref{eq:RA} and it is immediate that small perturbations of the parameters in~\eqref{eq:RA} yield a number close to the original one.

To obtain Theorem~\ref{teo:structural_stability} we felt the need to assume that the birth and death rates remain the same for all the family. This motivates the following question: do we have the same result if we only assume that the birth and death rates are close in the $C^0$ topology?
\bibliographystyle{elsart-num-sort}

\end{document}